\def\Xint#1{\mathchoice
{\XXint\displaystyle\textstyle{#1}}%
{\XXint\textstyle\scriptstyle{#1}}%
{\XXint\scriptstyle\scriptscriptstyle{#1}}%
{\XXint\scriptscriptstyle%
\scriptscriptstyle{#1}}%
\!\int}
\def\XXint#1#2#3{{\setbox0=\hbox{$#1{#2#3}{%
\int}$ }
\vcenter{\hbox{$#2#3$ }}\kern-.6\wd0}}
\def\barint{\, \Xint -} % \, corrects the \! used in the definition
\def\bariint{\barint_{} \kern-.4em \barint}
\def\bariiint{\bariint_{} \kern-.4em \barint}
\renewcommand{\iint}{\int_{}\kern-.34em \int} %\ minor space between the integrals
\renewcommand{\iiint}{\iint_{}\kern-.34em \int} %\ minor space between the integrals
\DeclareMathAlphabet{\mathcal}{OMS}{cmsy}{m}{n}
\newcommand{\R}{\mathbb{R}}
\newcommand{\C}{\mathbb{C}}
\newcommand{\Z}{\mathbb{Z}}
\newcommand{\T}{\mathbb{T}}
\newcommand{\PP}{\mathbb{P}}
\newcommand{\bI}{\mathbf{I}}
\newcommand{\cL}{\mathcal L}
\newcommand{\bu}{\bm{u}}
\newcommand{\bx}{\bm{x}}
\newcommand{\bk}{\bm{k}}
\newcommand{\be}{\bm{e}}
\newcommand{\bp}{\bm{p}}
\newcommand{\bv}{\bm{v}}
\newcommand{\divp}{{\rm{div}}_p}
\newcommand{\p}{\partial}
\newcommand{\la}{\langle}
\newcommand{\ra}{\rangle}
\newcommand{\les}{\lesssim}
\newcommand{\norm}[1]{\lVert #1 \rVert}
\let\div\relax
\DeclareMathOperator{\div}{div}
\let\tilde\relax
\newcommand{\tilde}[1]{\widetilde{#1}}
\let\hat\relax
\newcommand{\hat}[1]{\widehat{#1}}
\newcommand{\abs}[1]{\left\lvert #1 \right\rvert}
\newcommand{\mc}[1]{\mathcal{#1}}
\newcommand{\wh}[1]{\widehat{#1}}
\newcommand{\wt}[1]{\widetilde{#1}}
\let\Re\relax
\DeclareMathOperator{\Re}{Re}
\let\Im\relax
\DeclareMathOperator{\Im}{Im}
\crefname{hypothesis}{Hypothesis}{Hypotheses}
\title{On the stabilizing effect of swimming in an active suspension}
\author{Dallas Albritton \and Laurel Ohm}
\begin{document}

\maketitle

% REQUIRED
\begin{abstract}
    We consider a kinetic model of an active suspension of rod-like microswimmers. 
    In certain regimes, swimming has a stabilizing effect on the suspension. We quantify this effect near homogeneous isotropic equilibria $\overline{\psi} = \text{const}$. Notably, in the absence of particle (translational and orientational) diffusion, swimming is the only stabilizing mechanism. On the torus, in the non-diffusive regime, we demonstrate linear Landau damping up to the stability threshold predicted in the applied literature. With small diffusion, we demonstrate nonlinear stability of arbitrary equilibrium values for pullers (front-actuated swimmers) and enhanced dissipation for both pullers and pushers (rear-actuated swimmers) at small concentrations. On the whole space, we prove nonlinear stability of the vacuum equilibrium due to generalized Taylor dispersion. 
\end{abstract}

% REQUIRED
\begin{keywords}
Active matter, swimming, Landau damping, Taylor dispersion, enhanced dissipation
\end{keywords}

% REQUIRED
\begin{MSCcodes}
35Q35, 76Z10, 92C17
\end{MSCcodes}

\section{Introduction}

We consider a kinetic model of a dilute suspension of rod-like microswimmers proposed by Saintillan and Shelley~\cite{saintillan2008instabilitiesPOF,saintillan2008instabilitiesPRL} and, independently, Subramanian and Koch~\cite{subramanian2009critical}. The number density $\psi(\bx,\bp,t)$ of swimmers with center-of-mass position $\bx\in \Omega^d$ and orientation $\bp\in S^{d-1}$, $d=2,3$, evolves according to a Smoluchowski equation:
\begin{equation}\label{eq:smoluchowski}
    \p_t \psi + \bp\cdot\nabla_x\psi + \bu\cdot\nabla_x\psi + \div_p\left[({\bf I}-\bp\otimes\bp)(\nabla\bu \bp) \psi \right] = \nu \Delta_p\psi +\kappa\Delta_x\psi \, ,
\end{equation}
where $\nabla_p$, $\div_p$, and $\Delta_p = \div_p \nabla_p$ denote the gradient, divergence, and Laplacian, respectively, on $S^{d-1}$, see~\eqref{eq:flatgradient}--\eqref{eq:divcalc}. %Here, we consider both
We consider the domains $\Omega^d=\T^d := \R^d/(2\pi\Z)^d$ and $\Omega^d=\R^d$. 
The velocity field $\bu(\bx,t)$ of the surrounding fluid medium satisfies the Stokes equations forced by the divergence of an active stress $\bm{\Sigma}$, which measures the local alignment of swimmers:
\begin{align}
- \Delta \bu + \nabla q &= \div \bm{\Sigma}, \quad \div\bu =0 \, , \label{eq:stokes} \\
\bm{\Sigma}(\bx,t)&=\iota \int_{S^{d-1}} \psi(\bx,\bp,t)\, \bp\otimes\bp \, d\bp \, , \quad \iota\in\{\pm\} \, . \label{eq:activestress}
\end{align}
with $\int_{\T^d}\bu\,d\bx=0$ when $\Omega=\T^d$. The coefficient $\iota$ in the active stress $\bm{\Sigma}$ corresponds to the sign of the force dipole $\iota (\bp \cdot \nabla_x)\delta \bp$ exerted by each swimmer on the fluid and depends on the swimming mechanism: $\iota=+$ for \emph{pullers} (front-actuated swimmers), such as \emph{C. reinhardtii}, and $\iota=-$ for \emph{pushers} (rear-actuated swimmers), such as \emph{E.~coli}.

It is instructive to rewrite~\eqref{eq:smoluchowski} schematically as
\begin{equation}
    \p_t \psi + \div_x (\dot \bx \psi) + \div_p (\dot \bp \psi) = \nu \Delta_p\psi +\kappa\Delta_x\psi \, ,
\end{equation}
where the fluxes
\begin{equation}
    \label{eq:xpderivs}
    \dot \bx = \bp + \bu \, , \quad \dot \bp = (\bI - \bp \otimes \bp) \nabla \bu \bp
\end{equation}
%Equation~\eqref{eq:xpderivs}
signify that (i) particles swim in the direction of their orientation, (ii) 
particles are advected by the surrounding fluid flow, and (iii) particles' orientations evolve according to a Jeffery term~\cite{jeffery1922motion} for the rotational dynamics of an elongated particle in Stokes flow.
%\footnote{Mathematically, (ii) and (iii) correspond to the natural pushforward of unit tangent  vectors by a flow.}
The terms on the right-hand side of~\eqref{eq:smoluchowski} capture the orientational and center-of-mass diffusion, respectively, with coefficients $0 < \nu,\kappa \ll 1$. We refer to~\cite{saintillan2015theory} for a detailed derivation of the model, and we discuss our particular nondimensionalization in Appendix~\ref{app:nondim}.

\bigskip

The model~\eqref{eq:smoluchowski}-\eqref{eq:activestress} may be considered as a minimal model in which to study the large-scale flows generated by bacterial activity as seen in experiments. (See~\cite{saintillan2015theory} for comparisons between experimental, computational, and theoretical results.) A key difference between this model and, for example, the Doi--Edwards model for passive polymers~\cite{doi1981molecular,doi1986theory} is the presence of the swimming term $\bp\cdot\nabla_x\psi$ in \eqref{eq:smoluchowski}. This term seems to be necessary for the concentration fluctuations characteristic of certain bacterial suspensions \cite{saintillan2008instabilitiesPOF}, but its role is quite complex. The simplest setting in which to study the effects of swimming is near the uniform isotropic equilibrium $\psi\equiv \text{constant } \overline{\psi}$. 
%We aim to quantify these effects. 
%In particular, 
\emph{What role does swimming play in stabilizing the uniform isotropic equilibrium?}

In this paper, we identify and quantify three near-equilibrium effects of swimming. On $\T^d$, the value of the constant $\overline\psi$ 
%corresponding to the uniform isotropic equilibrium 
is a free parameter (see Appendix \ref{app:nondim} for our nondimensionalization of the model) and plays an important role in our results. Furthermore, the expected behavior of the system depends on whether we consider pullers or pushers in \eqref{eq:activestress}.
Defining the relative conformational entropy $\mc{S}(t)=\int_{x,p} (\psi/\overline\psi) \log\left(\psi/\overline\psi\right) \,  d\bp \,d\bx$, solutions of~\eqref{eq:smoluchowski}-\eqref{eq:activestress} on $\T^d$ formally satisfy the following $H$-theorem: 
\begin{equation}
    \label{eq:Htheorem}
    \begin{aligned}
    \overline\psi\frac{d\mc{S}}{dt} &= -\iota d \int_{\T^d} \abs{\nabla\bu}^2\, d\bx  - 4\int_{\T^d\times S^{d-1}}\left(\nu |\nabla_p\sqrt{\psi}|^2+ \kappa|\nabla_x\sqrt{\psi}|^2 \right)\,d\bp \,d\bx  \,,
    \end{aligned}
\end{equation}
from which we can see that pullers ($\iota=+$) always have decreasing conformational entropy, whereas pushers ($\iota=-$) may not. 

Our main results may be summarized as follows ($d=2,3$ unless stated otherwise):
\begin{itemize}[leftmargin=*]%0.75cm]
    \item \emph{Landau damping}. Solutions of the linearized inviscid  equations ($\nu = \kappa = 0$) on $\T^d$ decay algebraically due to \emph{phase mixing} provided that $\iota=+$ or $\overline{\psi} < \overline{\psi}^*$, which is given by a suitable Penrose condition. \\
    \item \emph{Taylor dispersion}. The vacuum state $\overline{\psi} = 0$ in $\R^3$ is nonlinearly stable with respect to small perturbations due to the dispersive effect of the operator $\bp \cdot \nabla_x - \nu \Delta_p$. As a byproduct of the analysis, arbitrarily large puller $(\iota = +)$ equilibria on $\T^d$ are nonlinearly exponentially stable with respect to small perturbations. \\
    \item \emph{Enhanced dissipation}. For small $\overline{\psi} \ll \nu^{-(\frac{1}{2}+)}$, the equilibrium $\overline{\psi}$ is nonlinearly exponentially stable on $\T^d$ with respect to small perturbations due to the hypocoercive effect of the operator $\bp \cdot \nabla_x - \nu \Delta_p$. Nearby solutions converge to their $\bx$-averages $\la \psi \ra(\bp,t) := \int \psi(\bx,\bp,t) \, d\bx$ in the \emph{enhancement time} $O(\nu^{-(\frac{1}{2}+)})$. The $\bx$-averages $\la \psi \ra$ converge to $\overline{\psi}$ in the \emph{diffusive time} $O(\nu^{-1})$ and are \emph{metastable}.
\end{itemize}

%We address this question in both the viscous ($\nu,\kappa>0$) and inviscid ($\nu=\kappa=0$) regimes. We quantify three effects: linear Landau damping on $\T^d$, nonlinear Taylor dispersion on $\R^d$ and $\T^d$, and nonlinear enhanced dissipation on $\T^d$. 

Our results on stability may be viewed as complementary to the wealth of computational literature on dynamics in the unstable pusher region, where perturbations to the uniform isotropic equilibrium can be seen to give rise to the emergence of collective swimmer motion and large-scale flows~\cite{hohenegger2010stability, koch2011collective, ohm2022weakly, saintillan2007orientational, saintillan2008instabilitiesPOF, saintillan2008instabilitiesPRL, saintillan2012emergence, saintillan2013active, saintillan2015theory, subramanian2009critical}. 
In particular, our results highlight the complex role of swimming in these collective dynamics. Without swimming, the isotropic state in pusher suspensions is always unstable for $0\le \nu,\kappa\ll1$ \cite{ohm2022weakly}, and, as we see here, swimming has a clear stabilizing effect. However,
as noted in \cite{saintillan2008instabilitiesPOF}, swimming is also a necessary ingredient for the particle density fluctuations observed in simulations. The interplay between the destabilizing active stress \eqref{eq:activestress} in the pusher case and stabilizing swimming is perhaps worthy of further mathematical exploration.

\subsection{Main results} In this section, we make the above informal assertions precise.

\subsubsection{Landau damping}
We consider the linearized inviscid equations:
\begin{align}
    \p_t  f + \bp \cdot \nabla_x f - d \overline{\psi} \nabla \bu:\bp\otimes\bp &= 0 \label{eq:lin_inviscid1}\\
    - \Delta \bu + \nabla q= \div \bm{\Sigma} \, , \quad \div \bu &= 0 \, , \label{eq:lin_inviscid2}
\end{align}
with active stress
\begin{equation}\label{eq:Sigma}
   \bm{\Sigma}[f](\bx,t) = \iota \int_{S^{d-1}} \bp\otimes\bp \,f(\bx,\bp,t) \, d\bp \, , \qquad \iota \in \{ \pm \} \, .
\end{equation}
Here $\overline{\psi} \geq 0$ is the constant background solution. Solutions satisfy 
\begin{equation}
    \label{eq:linearizedhtheorem}
    \frac{1}{2}\frac{d}{dt}\norm{f}_{L^2_{x,p}}^2  + \iota d\overline\psi \norm{\nabla\bu}_{L^2_x}^2 =0 \, ,
\end{equation}
which is a linearized version of the $H$-theorem~\eqref{eq:Htheorem}.

To better understand the underlying decay mechanism, we consider $\overline{\psi} = 0$ and perform a mode-by-mode analysis for $f(\bx,\bp,t) = h(\bp,t) e^{i\bk \cdot \bx}$ and $f^{\rm in} = f(\cdot,0)$. Without loss of generality, we may take $\bk = k \be_1$ and solve for $h$ as
\begin{equation}
    h = e^{-ikp_1 t} h^{\rm in} \, .
\end{equation}
%In particular, o
Over time, the solution develops large oscillations and is transferred to higher and higher frequencies in $\bp$. This increasingly oscillatory behavior for an isolated spatial mode can be observed in numerical simulations by Hohenegger and Shelley~\cite[Figure 3]{hohenegger2010stability}. Notably, $h$ converges to zero \emph{weakly} in $L^2$ but not strongly, and the convergence can be quantified in negative Sobolev spaces via the method of stationary phase, see Section~\ref{sec:landaudamping}. This is known as \emph{phase mixing} in kinetic theory.

% about any uniform-in-$\bx$
It is observed in~\cite{hohenegger2010stability, saintillan2008instabilitiesPOF, subramanian2009critical} that the linearized operator about large $\overline\psi$ admits unstable eigenvalues in the case of pushers but not pullers. Away from this pusher instability, we show that the phase mixing described above persists, causing the velocity field $\bu$ to quickly decay.
%and the solution to equilibrate. 
We call this phenomenon \emph{Landau damping}, terminology again borrowed from kinetic theory. % linearized system experiences Landau damping on $\T^d$ with polynomial $L^2$-in-time decay.

%In the inviscid setting ($\nu=\kappa=0$) on $\T^d$, any spatially uniform state $\psi=\psi(\bp)$ is an equilibrium of \eqref{eq:smoluchowski}-\eqref{eq:activestress}. We therefore study the stability of an arbitrary spatially homogeneous steady state, rather than just the isotropic-in-$\bp$ state. Without diffusion, any notion of stability here is due to mixing from swimming.

%The linearized operator about any uniform-in-$\bx$ steady state admits unstable eigenvalues for large $\overline\psi$ in the case of pushers but not pullers \cite{hohenegger2010stability, saintillan2008instabilitiesPOF, subramanian2009critical}. Away from this pusher instability, we show that the linearized system experiences Landau damping on $\T^d$ with polynomial $L^2$-in-time decay.
 Letting $f$ denote the solution to the linearized system, %version of \eqref{eq:smoluchowski}-\eqref{eq:activestress} with $\nu=\kappa=0$,
 we show the following: 
\begin{theorem}[Linear Landau damping]
\label{thm:landaudamping}
Let $f^{\rm in} \in L^2_x H^{d+1}_p(\T^d \times S^{d-1})$. Suppose that $\iota = +$ or $\overline{\psi} < \overline{\psi}^*$. Then the velocity field $\bu$ generated by the solution $f \in C_t L^2_{x,p}(\T^d \times S^{d-1} \times \R_+)$ to the linearized PDE \eqref{eq:lin_inviscid1}--\eqref{eq:Sigma} on $\T^d$ satisfies
\begin{equation}
    \label{eq:nableudecaythm}
    \int \|\nabla \bu(\cdot,t)\|_{L^2_x}^2 \la t \ra^{d-\varepsilon} \, dt \les_{\overline{\psi},\varepsilon} \| f^{\rm in} \|_{L^2_x H^{d+1}_p}^2 \, 
\end{equation}
for all $\varepsilon > 0$, where $\la t \ra = \sqrt{1+t^2}$.
The solution $f$ decomposes as
\begin{equation}
    f = f_{\rm lin} + g \, ,
\end{equation}
where $f_{\rm lin} = e^{- \bp \cdot \nabla_x t} f^{\rm in}$ satisfies
\begin{equation}
    \| f_{\rm lin}(\cdot,t) \|_{L^2_x H^{-(d-1)}_p} \les \la t \ra^{-\frac{d-1}{2}} \| f^{\rm in} \|_{L^2_x H^{d-1}_p} \, ,
\end{equation}
and $g = d \overline{\psi} \int_0^t e^{- \bp \cdot \nabla_x (t-s)} (\nabla \bu : \bp \otimes \bp)(\cdot,s) \, ds$ satisfies, for all $\varepsilon > 0$,
\begin{equation}
    \label{eq:gdecaythm}
    \int \| g(\cdot,t) \|_{L^2_x H^{-(d+1)}_p}^2 \la t \ra^{d-\varepsilon} \, dt \les_{\overline{\psi},\varepsilon} \| f^{\rm in} \|_{L^2_x H^{d+1}_p}^2 \, .
\end{equation}
\end{theorem}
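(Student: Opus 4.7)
The plan is to work mode-by-mode in the $\bx$-Fourier variable and reduce each mode to a Volterra equation for the velocity, which is then inverted via the Penrose condition. Fix $\bk \in \Z^d \setminus \{0\}$ and write $f(\bx,\bp,t) = h(\bk,\bp,t)\, e^{i\bk\cdot\bx}$. The Stokes system gives $\hat\bu(\bk,t) = i|\bk|^{-2}\bigl(\bI - \tfrac{\bk\otimes\bk}{|\bk|^2}\bigr)\bigl(\bk \cdot \hat{\bm\Sigma}[h]\bigr)$, and Duhamel's formula yields
\begin{equation*}
h(\bk,\bp,t) = e^{-i(\bk\cdot\bp)t}\, h^{\rm in}(\bk,\bp) + d\overline\psi\, i\int_0^t e^{-i(\bk\cdot\bp)(t-\sigma)}(\bk\cdot\bp)\bigl(\bp\cdot\hat\bu(\bk,\sigma)\bigr)\,d\sigma,
\end{equation*}
which identifies the free-transport piece $f_{\rm lin}$ and the Duhamel contribution $g$. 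Inserting this back into $\hat{\bm\Sigma}$ and projecting onto $\bk^\perp$ produces a closed Volterra equation $\bv(t) = \bm{F}(t) + \int_0^t \bm{K}(t-\sigma)\bv(\sigma)\,d\sigma$ for $\bv(t) := \hat\bu(\bk,t) \in \bk^\perp$, with kernel $\bm{K} = \bm K(\bk,\cdot)$ independent of the initial data and forcing $\bm{F}$ equal to a $\bp$-moment of $f_{\rm lin}$.

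The decay of $\bm F$ comes from stationary phase on $S^{d-1}$ for oscillatory integrals with phase $\bp \mapsto \bk\cdot\bp$, whose critical points $\pm\bk/|\bk|$ are non-degenerate; this gives $|\bm F(t)| \les (|\bk|t)^{-(d-1)/2}\|h^{\rm in}(\bk,\cdot)\|_{H^{d-1}_p}$, which delivers the $f_{\rm lin}$ bound claimed in the theorem (higher-order versions are available when more derivatives are absorbed). The Laplace transform of the kernel is essentially $\tilde{\bm K}(z,\bk) = \iota d \int_{S^{d-1}} (\bk\cdot\bp)^2\, \bp\otimes\bp/(z + i\bk\cdot\bp)\,d\bp$, restricted to $\bk^\perp$; its trace on the imaginary axis involves principal-value integrals and encodes the Penrose function. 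The hypothesis ($\iota = +$ or $\overline\psi < \overline\psi^*$) is precisely the statement that $\bI - d\overline\psi\,\tilde{\bm K}(z,\bk)$ is invertible on the closed right half-plane with inverse bounded uniformly in $\bk$: for pullers this reflects the sign of the Hermitian part of $\tilde{\bm K}$, the Laplace-side analogue of~\eqref{eq:linearizedhtheorem}; for pushers below threshold it reduces to a one-dimensional Penrose-type computation.

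With these two ingredients, a Paley--Wiener argument transfers decay from $\bm F$ to $\bv$: writing $\tilde\bv = (\bI - d\overline\psi\,\tilde{\bm K})^{-1}\tilde{\bm F}$ and shifting the contour to $\Re z = 0$, repeated integration by parts in $z$ produces $|\bv(t)| \les \la |\bk|t\ra^{-(d-1)/2-\delta}$ for some small $\delta>0$, at the cost of losing additional $\bp$-derivatives on the data (this is the origin of the $H^{d+1}_p$ regularity in the statement). Plancherel in $\bx$ then yields the weighted $L^2_t$ estimate~\eqref{eq:nableudecaythm}. The bound~\eqref{eq:gdecaythm} on $g$ follows by feeding the decay of $\nabla\bu$ into the Duhamel representation of $g$ and applying stationary phase once more in $\bp$; the $f_{\rm lin}$ bound was already obtained in the first step.

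The main obstacle will be the uniform-in-$\bk$ analysis of the Penrose denominator, especially in the large-$|\bk|$ and near-threshold regimes, together with the careful bookkeeping of the Paley--Wiener step needed to extract the sharp time weight $\la t\ra^{d-\varepsilon}$ rather than a lossier polynomial. A related delicate point is that $\tilde{\bm K}(\cdot,\bk)$ admits only limited regularity as $\Re z \downarrow 0$, so invertibility of $\bI - d\overline\psi\,\tilde{\bm K}$ must be established with only H\"older-type control at the imaginary boundary.
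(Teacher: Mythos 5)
Your overall scheme (mode-by-mode reduction to a Volterra equation, Penrose condition, Laplace inversion, Plancherel in $\bx$) tracks the paper's strategy at a high level, but you miss the one structural observation the whole argument turns on, and without it the Volterra step does not close.

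The crucial point is that the forcing and the kernel in the Volterra equation are not arbitrary $\bp$-moments of $e^{-i(\bk\cdot\bp)t}h^{\rm in}$: after projecting onto $\bk^\perp$ (or, in the paper's normalization, reading off the matrix structure~\eqref{eq:gradu_hat} of $\wh{\nabla\bu}$), the relevant integrands carry the prefactor $p_1p_j$ with $j\neq 1$, which \emph{vanishes at the stationary points} $\pm\be_1$ of the phase $\bp\mapsto p_1$. This upgrades the stationary-phase decay from the generic $\la t\ra^{-(d-1)/2}$ to $\la t\ra^{-(d-1)/2-1}=\la t\ra^{-(d+1)/2}$, which is exactly Lemma~\ref{lem:stationaryphase}(ii) and the estimates~\eqref{eq:whdatadecay}--\eqref{eq:Kdecay}. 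Your proposal only records the generic rate $|\bm F(t)|\les(|\bk|t)^{-(d-1)/2}$ and then writes off the sharper decay to ``higher-order versions \dots when more derivatives are absorbed.'' That is not how stationary phase works: absorbing more $\bp$-derivatives of the amplitude does not change the $t$-exponent, which is fixed by the dimension of $S^{d-1}$ and the Hessian at the critical points; the exponent improves only when the amplitude vanishes there. Concretely, with the rate you use, the kernel $\bm K(t)\sim t^{-(d-1)/2}$ is \emph{not time-integrable} for $d=2,3$, so $\cL\bm K$ is not even defined as a continuous function on $\{\Re\lambda\ge 0\}$, and the Penrose/inversion step cannot be carried out as stated. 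The paper's observation that the zero entry in $\wh{\nabla\bu}$ forces the amplitude to vanish at $\pm\be_1$ is precisely what rescues this; it is also why the paper deliberately closes the Volterra equation for $\wh{\nabla\bu}$ rather than $\bm\Sigma$ (see the footnote after~\eqref{eq:hissolution}).

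A secondary issue is how you propose to go from the Penrose bound to the weighted $L^2_t$ estimate. ``Repeated integration by parts in $z$'' aiming at a pointwise bound $|\bv(t)|\les\la|\bk|t\ra^{-(d-1)/2-\delta}$ with small $\delta$ is both vaguer and quantitatively short of the target: to get~\eqref{eq:nableudecaythm} from a pointwise bound one needs decay faster than $\la t\ra^{-(d+1)/2+}$, not a small improvement over $(d-1)/2$. The paper instead proves that $\cL K(\sigma+i\tau)$ and $\cL\wh{\nabla\bu}[e^{-ip_1\cdot}h^{\rm in}](\sigma+i\tau)$ lie in $H^s_\tau$ for all $s<d/2$ uniformly in $\sigma\ge 0$ (this is exactly what the improved $\la t\ra^{-(d+1)/2}$ decay buys by Plancherel in $t\leftrightarrow\tau$), then composes with the smooth function $m\mapsto(1+\iota d\overline\psi m)^{-1}-1$, uses the algebra property of $H^s_\tau$ for $s>1/2$, and inverts the Fourier transform on the line $\sigma=0$ to get the weighted $L^2_t$ estimate directly. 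In particular your worry that $\tilde{\bm K}$ has ``only H\"older-type control at the imaginary boundary'' is resolved by the improved decay: it actually lands in $H^s_\tau$ with $s<d/2>1/2$, which is exactly the regularity needed. Finally, the uniform-in-$\bk$ issue you flag as the ``main obstacle'' is in fact benign once one rescales time and replaces $\overline\psi$ by $\overline\psi/k$: larger $|\bk|$ only improves the Penrose margin and the decay rate, so the constant is uniform over $\bk\in\Z^d\setminus\{0\}$.
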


In the pusher case ($\iota=-$), the stability threshold $\overline\psi^*$, defined in Lemma~\ref{lem:penrose}, corresponds precisely to the eigenvalue crossing studied in the computational literature \cite{hohenegger2010stability, ohm2022weakly, saintillan2008instabilitiesPRL, saintillan2008instabilitiesPOF, saintillan2013active, subramanian2009critical}. It arises in the \emph{Penrose condition} (terminology borrowed from kinetic theory) on the Volterra equation for $\nabla\bu$ in the proof of Theorem~\ref{thm:landaudamping}.

The standard setting in which to investigate Landau damping is the \emph{Vlasov-Poisson equation}
\begin{equation}
    \label{eq:vlasovpoisson}
    \p_t f + \bv \cdot \nabla_x f + \bm{E} \cdot \nabla_v f = 0 \, ,
\end{equation}
\begin{equation}
    \label{eq:electricfield}
    \bm{E}(\bx,t) =  \iota \nabla \Delta^{-1} \varrho \, , \quad \varrho = \int_v f \, d\bv \, , \quad \iota \in \{ \pm \} \, ,
\end{equation}
near homogeneous equilibria $\bar{f}(\bv)$ (typically radial or Maxwellian), where $\iota = +$ is the repulsive (ionic) case and $\iota = -$ is the attractive (gravitational) case. Landau damping in this context has been thoroughly studied in the PDE community~\cite{mouhotvillani,bedrossianmasmoudimouhot,grenier2020landau,chaturvedi2021vlasov}. We will discuss below certain important differences between our model~\eqref{eq:smoluchowski}-\eqref{eq:activestress} and the Vlasov-Poisson equations~\eqref{eq:vlasovpoisson}-\eqref{eq:electricfield}.

%For each spatial mode $\bk\neq 0$, we obtain time decay of the Fourier coefficients in negative Sobolev spaces in $\bp$. In particular, over time, the solution is transferred to higher and higher frequencies in $\bp$; i.e. the particles are mixing. This increasingly oscillatory behavior in $\bp$ for an isolated spatial mode can be observed in numerical simulations by Hohenegger and Shelley (\cite{hohenegger2010stability}, Figure 3). 
%Due to these oscillations in $\bp$, the particle concentration $c(\bx,t)=\int_{S^{d-1}}\psi(\bx,\bp,t)\, d\bp$ converges to its mean.

\subsubsection{Taylor dispersion}

For particles swimming with speed $U_0$, various sources (see, e.g.,~\cite[p. 282]{lauga2020fluid} and~\cite[p. 326]{saintillan2015theory}) predict \emph{generalized Taylor dispersion}, namely, effective $\bx$-diffusion $\left( \kappa + \frac{U_0^2}{2d \nu} \right) \Delta_x$ in the regime $0 < \nu \ll 1$ of weak orientational diffusion. The hallmark of Taylor dispersion~\cite{frankel1989foundations, aris1956dispersion, manela2003generalized, taylor1954dispersion} is the inverse dependence of the effective viscosity coefficient on $\nu$.

We begin by extracting the Taylor dispersion effect in the linearized PDE with $\overline{\psi} = 0$:
\begin{equation}
    \label{eq:thisisfordemonstration}
    \p_t f + \bp \cdot \nabla_x f = \nu \Delta_p f \, .
\end{equation}
If $f$ solves the above PDE, then $g = e^{\kappa \Delta_x t} f$ solves the analogous PDE with additional diffusion $\kappa \Delta_x$, so there is no loss of generality in setting $\kappa = 0$ at the linear level. (This is a common feature of the linearized operators in this paper.)

Let $f = h(\bp,t) e^{i \bk \cdot \bx}$ and $k = |\bk|$. In Theorem~\ref{thm:linTayDisp} ($\nu \leq k$) and Theorem~\ref{thm:Phi} ($\nu \geq k$), we prove
\begin{equation}
    \label{eq:dampingoftheting}
    \| h \|_{L^2}  \leq e^{-c_0 \lambda_{\nu,k} t} \| h^{\rm in} \|_{L^2} \, , 
\end{equation}
where
\begin{equation}
    \label{eq:dampingcoeffoftheting}
    \lambda_{\nu,k} = \begin{cases}
    \frac{k^2}{\nu} & k \leq \nu \\
    \frac{\nu^{1/2}k^{1/2}}{1+\abs{\log (\nu/k)}} & k \geq \nu \, .
    \end{cases}
\end{equation}
The inverse dependence of $\lambda_{\nu,k}$ on $\nu$ is seen in low modes $k \leq \nu$. However, as is characteristic of Taylor dispersion, this inverse dependence is not ``seen" until the diffusive time $O(\nu^{-1})$, that is, once the $k \geq \nu$ modes have been damped. The particular $\nu^{1/2}$ dependence of $\lambda_{\nu,k}$ is sometimes known as \emph{enhanced dissipation}, and we explain it in the next section. 

At the nonlinear level, the dispersive effect\footnote{Although this is called Taylor dispersion, mathematically the operator $\bp \cdot \nabla_x - \nu \Delta_p$ would not typically be called dispersive. In low modes, it creates a diffusive (parabolic) operator.} allows us to prove

%In particular, at low wavenumbers, we obtain an inverse dependence of the effective diffusion coefficient $\mu_{\nu,k}$ on the rotational diffusivity $\nu$, a hallmark of (generalized) Taylor dispersion . 

%In the viscous regime, we prove nonlinear stability of the uniform isotropic equilibrium in two settings: (i) near vacuum $\overline{\psi} \equiv 0$ on the whole space $\R^3$, and (ii) near $\overline{\psi} \equiv \text{constant}$ on the torus $\T^d$. In setting (i), where small wavenumbers $\abs{\bk}$ are present, we show the following generalized Taylor dispersion result for both pullers and pushers: 

\begin{theorem}[Nonlinear stability of vacuum in $\R^3$]
\label{thm:vacuumstab}
Let $d=3$, $\nu \in (0,1]$, and $\kappa > 0$. Let $0 \leq \psi^{\rm in} \in L^1_x L^2_p \cap H^2_x L^2_p(\R^3 \times S^2)$. If $0 < \varepsilon_0 \ll \min (\nu, \kappa)$ and\footnote{Throughout the paper, $a\ll b$ is used in hypotheses to mean that there exists an absolute constant $m_0$ such that $a\le m_0 b$.}
%\begin{equation}
    %M := \int f^{\rm in} \, d\bx \leq \varepsilon_0\,,
%\end{equation}
%and
\begin{equation}
    \varepsilon := \|  \psi^{\rm in}  \|_{(L^1_x \cap H^2_x) L^2_p} \leq \varepsilon_0 \, ,
\end{equation}
then the strong solution $\psi$ to \eqref{eq:smoluchowski}--\eqref{eq:activestress} on $\R^3$ exists globally in time and satisfies the decay estimate
\begin{equation}
    \| \psi(\cdot,t)  \|_{L^\infty_x L^2_p} + \|  \psi(\cdot,t)  \|_{(\dot H^{d/2} \cap \dot H^2)_x L^2_p} \les r(\nu t) \varepsilon \, ,
\end{equation}
where% $r(\cdot)$ is the decay rate
\begin{equation}
    r(s) = \la s \ra^{-\frac{d}{2}} (\log (2+s))^2 \, .
\end{equation}
\end{theorem}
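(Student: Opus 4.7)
The argument is a bootstrap/continuation based on Duhamel's formula around the linear semigroup $e^{-tL}$, where $L f := \bp\cdot\nabla_x f - \nu\Delta_p f - \kappa\Delta_x f$, combined with the linear Taylor dispersion estimates \eqref{eq:dampingoftheting}--\eqref{eq:dampingcoeffoftheting} provided by Theorems~\ref{thm:linTayDisp} and~\ref{thm:Phi}. Writing \eqref{eq:smoluchowski}--\eqref{eq:activestress} as $\p_t f + L f = N(f)$ with
\[
N(f) := -\bu\cdot\nabla_x f - \divp\bigl[(\bI-\bp\otimes\bp)(\nabla\bu\,\bp)\,f\bigr]\,,
\]
and noting that $\bu = \bu[f]$ depends linearly on $f$ through Stokes, $N$ is quadratic in $f$. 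I would run a bootstrap on
\[
\|f\|_X := \sup_{t\ge 0}\,r(\nu t)^{-1}\bigl(\|f(\cdot,t)\|_{L^\infty_x L^2_p} + \|f(\cdot,t)\|_{(\dot H^{d/2}\cap \dot H^2)_x L^2_p}\bigr)\,,
\]
aiming to show $\|f\|_X \le C\varepsilon + C\nu^{-1}\|f\|_X^2$; under the smallness $\varepsilon \ll \nu$, this closes by a standard continuation argument.

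The first step is to promote the mode-by-mode estimate $\|\hat f(\bk,t)\|_{L^2_p} \le e^{-c_0 \lambda_{\nu,|\bk|} t}\|\hat f^{\rm in}(\bk,\cdot)\|_{L^2_p}$ to the physical-space semigroup bound
\[
\|e^{-tL}g\|_{L^\infty_x L^2_p} + \|e^{-tL}g\|_{(\dot H^{d/2}\cap\dot H^2)_x L^2_p} \les r(\nu t)\,\|g\|_{(L^1_x \cap H^2_x) L^2_p}\,.
\]
Starting from $\|f\|_{L^\infty_x}\le \|\hat f\|_{L^1_\bk}$ and $\|\hat g\|_{L^\infty_\bk L^2_p} \les \|g\|_{L^1_x L^2_p}$, I would split the $\bk$-integral at $|\bk| = \nu$. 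The low-frequency part produces the Gaussian-type integral $\int_{|\bk|\le \nu} e^{-c|\bk|^2 t/\nu}\,d\bk$ coming from the Taylor-dispersion rate $|\bk|^2/\nu$, and yields the $\la \nu t\ra^{-d/2}$ factor of $r$. The high-frequency part is controlled by the enhanced dissipation rate $\nu^{1/2}|\bk|^{1/2}/|\log(\nu/|\bk|)|$ together with the $\kappa|\bk|^2$ damping from $\kappa\Delta_x$, the logarithmic denominator being absorbed into the $(\log(2+\nu t))^2$ weight of $r$. The $(\dot H^{d/2}\cap \dot H^2)_x$ output norm is produced analogously by inserting the corresponding $|\bk|^{d/2}$ and $|\bk|^2$ weights, balanced against the $H^2_x$ regularity of the data.

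For the nonlinearity, Calder\'on--Zygmund bounds for Stokes in $\R^3$ give the zero-order controls $\|\bu\|_{L^q_x} + \|\nabla\bu\|_{L^q_x} \les \|f\|_{L^q_x L^1_p}$ for $1<q<\infty$, so $\bu$ and $\nabla\bu$ are (in $\bx$) zero-order operators on $f$. Using the embeddings $\dot H^{d/2}_x\cap \dot H^2_x\hookrightarrow L^\infty_x$ in $d=3$ and, where needed, moving the $\nabla_x$ in $\bu\cdot\nabla_x f$ onto $\bu$ through the semigroup via integration by parts, I would obtain
\[
\|N(f)\|_{(L^1_x \cap H^2_x) L^2_p} \les \bigl(\|f\|_{L^\infty_x L^2_p} + \|f\|_{(\dot H^{d/2}\cap \dot H^2)_x L^2_p}\bigr)^2\,.
\]
Inserting into Duhamel and invoking the time-convolution bound $\int_0^t r(\nu(t-s))\,r(\nu s)^2\,ds \les \nu^{-1} r(\nu t)$ (proved by splitting $[0,t/2]\cup[t/2,t]$ and using the integrability of $r$ and $r^2$ on $\R_+$, which is where the dimension $d=3$ is crucially used) then delivers the bootstrap inequality $\|f\|_X \le C\varepsilon + C\nu^{-1}\|f\|_X^2$ and closes the argument under $\varepsilon \le \varepsilon_0 \ll \nu$.

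The main obstacle is the simultaneous propagation of the dispersive scale $L^\infty_x L^2_p$ and the high-Sobolev scale $(\dot H^{d/2}\cap \dot H^2)_x L^2_p$ under the nonlinearity: the transport term $\bu\cdot\nabla_x f$ forces a loss of one $\bx$-derivative, which must be recovered either from the $\kappa\Delta_x$ parabolic smoothing (explaining the hypothesis $\varepsilon \ll \kappa$) or from the $H^2_x$ regularity propagated by the linear estimate. A secondary subtle point is ensuring that the time convolution returns \emph{exactly} the rate $r(\nu t)$ without squaring the $(\log(2+\nu t))^2$ weight; this requires a careful case analysis of the regimes in which $\nu s$ and $\nu(t-s)$ are respectively small or large.
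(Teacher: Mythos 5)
Your overall strategy---a bootstrap on a time-weighted norm using Duhamel's formula around the linearized semigroup, with the rate $r(\nu t)$ extracted from the low-mode diffusive scale $|\bk|^2/\nu$---is the same as the paper's. However, there are several genuine gaps in the implementation.

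First, the claimed bilinear estimate $\|N(f)\|_{(L^1_x \cap H^2_x)L^2_p} \les \bigl(\|f\|_{L^\infty_x L^2_p}+\|f\|_{(\dot H^{d/2}\cap\dot H^2)_x L^2_p}\bigr)^2$ cannot hold, because $N(f)$ contains $\divp\bigl[(\bI-\bp\otimes\bp)(\nabla\bu\,\bp)f\bigr]$, whose $L^2_p$ norm requires one derivative of $f$ in $\bp$ that your bootstrap norm does not control. You acknowledge the $\bx$-derivative loss in $\bu\cdot\nabla_x f$ and point to the $\kappa\Delta_x$ smoothing, but you say nothing about how the $\bp$-derivative is recovered. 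The $\nu\Delta_p$ operator does not commute with $\bp\cdot\nabla_x$, so you cannot simply invoke $\|e^{\nu t\Delta_p}\divp\|\les(\nu t)^{-1/2}$. The paper handles this by keeping the $\divp$ inside the Duhamel integral and proving an $L^2$-in-time smoothing estimate via a duality/adjoint argument (Corollary~\ref{cor:smoothingestf} on the torus, Lemma~\ref{lem:smoothinginpwholespacest} on $\R^3$), exploiting that $\nu\int_0^T\|\nabla_p S(s)f^{\rm in}\|^2\,ds\les\|f^{\rm in}\|^2$. This is the step I do not see how you would replace.

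Second, the embedding you invoke, $\dot H^{d/2}_x\cap\dot H^2_x\hookrightarrow L^\infty_x$, is false in $d=3$: the low-frequency piece of $\|\hat f\|_{L^1_\xi}$ requires $|\xi|^{-d}\in L^1$ near the origin, which fails. One needs the endpoint Besov space $\dot B^{d/2}_{2,1}$, whose $\ell^1$ summability is precisely what the paper's choice of norms $Y=(\dot B^{d/2}_{2,1}\cap\dot H^2)_xL^2_p$ provides. Relatedly, your bootstrap norm $\|\cdot\|_X$ does not retain any low-frequency ($L^1$-type) control of the \emph{solution}, only of the data. To run the bootstrap you need the analogue of the paper's propagated bound $\sup_t\|f(\cdot,t)\|_{(\dot B^{-d/2}_{2,\infty})_x L^2_p}\le C_0\varepsilon$, together with the conserved mass $\|f\|_{L^1_{x,p}}$, to control low-frequency interpolation quantities like $\|\bu\|_{L^2_x}$ and $\|\bu\cdot\nabla_x f\|_{L^1_x L^2_p}$. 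Without these, the time-convolution inequality does not close. These are not cosmetic choices: replacing your $L^\infty_x$ and $\dot H^{d/2}_x$ norms by the paper's $\dot B^{d/2}_{2,1}$ and $\dot B^{-d/2}_{2,\infty}$ Besov scales, and adding the $L^2_t$-based $\divp$-smoothing lemma, are the essential missing ingredients.
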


For completeness, we develop a basic theory of strong solutions in Appendix~\ref{app:strongsols}, see specifically Definition~\ref{def:strongsol} and Theorem~\ref{thm:strongsol}.

%THIS HAS BEEN STUDIED RIGOROUSLY (AT THE LINEAR LEVEL) IN \cite{beckchaudharywayne, gallay2021enhanced}.

Note that Theorem~\ref{thm:vacuumstab} holds for both pullers and pushers near equilibrium, which is consistent with the observation of Saintillan--Shelley~\cite{saintillan2007orientational, saintillan2012emergence} in many-particle simulations that the diffusivity of individual swimmers is not dependent on the emergence of large-scale flows in the suspension. This may be contrasted with enhanced diffusion of tracer particles in swimmer suspensions, a well-studied phenomenon experimentally, which does rely on collective motion of the swimmers and is different from the effect studied here.

Our proof of the linearized estimates (see Theorem~\ref{thm:linTayDisp}) is based on a well-chosen energy estimate for the solution and two moments. It is inspired by~\cite[Section 3]{BedrossianWang2019}, which is itself inspired by work of Guo on collisional kinetic equations, e.g.,~\cite{guolandau}. These energy estimates capture the Taylor dispersion effect (though not enhanced dissipation for modes $k \geq \nu$) in the puller setting for \emph{arbitrary} $\overline{\psi} \geq 0$ (that is, the non-local term may be non-perturbative). As a consequence, we show
%In case (ii), we show that nonlinear stability holds for all $\overline\psi$ in the case of pullers:
\begin{theorem}[Nonlinear stability of puller equilibrium on $\T^d$]
\label{thm:pullerstab}
Let $\nu \leq 1$, $\kappa > 0$, $\iota = +$, and $\overline{\psi} \geq 0$.  Let $f^{\rm in} \in H^2_x L^2_p(\T^d \times S^{d-1})$ with $\int f^{\rm in} \, d\bx \, d\bp = 0$. If $0 < \varepsilon_0 \ll \min (\nu, \kappa)$ and
\begin{equation}
    \varepsilon := \| f^{\rm in} \|_{H^2_x L^2_p} \leq \varepsilon_0 \, ,
\end{equation}
then the strong solution $\psi = \overline{\psi} + f$  to \eqref{eq:smoluchowski}--\eqref{eq:activestress} on $\T^d$ exists globally in time and the perturbation $f$ satisfies the decay estimate
\begin{equation}
    \| f(\cdot,t) \|_{H^2_x L^2_p} \les e^{-\delta_0 \nu t} \varepsilon 
\end{equation}
for some $\delta_0>0$.
\end{theorem}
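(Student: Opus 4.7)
My plan is to close an $H^2_x L^2_p$ energy estimate for the perturbation $f = \psi - \overline{\psi}$, using the puller sign in the $H$-theorem~\eqref{eq:Htheorem} to treat the non-local coupling non-perturbatively in $\overline{\psi}$, and absorbing the quadratic nonlinearities via $\varepsilon \ll \nu$. The perturbation solves
\begin{equation*}
\partial_t f + \bp\cdot\nabla_x f + \bu\cdot\nabla_x f - d\overline{\psi}\,\nabla\bu:\bp\otimes\bp + \div_p\big[(\bI-\bp\otimes\bp)(\nabla\bu\,\bp)\, f\big] = \nu\Delta_p f + \kappa\Delta_x f,
\end{equation*}
where $\bu = \bu[f]$ comes from the Stokes solve~\eqref{eq:stokes}--\eqref{eq:activestress} (the constant background produces only an isotropic stress and hence does not influence $\bu$).

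Testing against $f$ in $L^2_{x,p}$, the free-streaming term vanishes by periodicity, the nonlinear transport $\bu\cdot\nabla_x f$ vanishes by $\div\bu=0$, the linear Jeffery coupling produces $+d\overline{\psi}\|\nabla\bu\|_{L^2_x}^2$ via the Stokes identity $\int \nabla\bu:\bm{\Sigma}\,d\bx = -\|\nabla\bu\|_{L^2_x}^2$ together with $\iota=+$, and the nonlinear Jeffery piece integrates by parts on $S^{d-1}$ to $\frac{d}{2}\int(\nabla\bu:\bp\otimes\bp)f^2$. This yields
\begin{equation*}
\tfrac{1}{2}\tfrac{d}{dt}\|f\|_{L^2_{x,p}}^2 + d\overline{\psi}\|\nabla\bu\|_{L^2_x}^2 + \nu\|\nabla_p f\|^2_{L^2_{x,p}} + \kappa\|\nabla_x f\|^2_{L^2_{x,p}} = \tfrac{d}{2}\!\int(\nabla\bu:\bp\otimes\bp)\,f^2\,d\bp\,d\bx,
\end{equation*}
in which the left-hand side is coercive for \emph{arbitrary} $\overline{\psi}\geq 0$. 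Decomposing $f = \widetilde f(\bx) + f^\perp$ with $\widetilde f(\bx) = |S^{d-1}|^{-1}\int f\,d\bp$, the conserved zero-mean constraint $\int f\,d\bx\,d\bp = 0$ forces $\widetilde f$ to have zero $\bx$-mean, and Poincar\'e on $S^{d-1}$ and on $\T^d$ yields $\nu\|\nabla_p f\|^2 + \kappa\|\nabla_x f\|^2 \gtrsim \min(\nu,\kappa)\|f\|_{L^2_{x,p}}^2$.

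To upgrade to $H^2_x L^2_p$ I would apply $\partial_x^\alpha$ for $|\alpha|\leq 2$: because the linear operator (including the Stokes solve) has $\bx$-constant coefficients, it commutes with $\partial_x^\alpha$ and the identity above holds with $(f,\bu)$ replaced by $(\partial_x^\alpha f,\partial_x^\alpha\bu)$. The remaining contributions are commutators of $\partial_x^\alpha$ with the two bilinearities; the top-order pairing vanishes by incompressibility, and the lower-order terms are bounded by $\|\nabla\bu\|_{W^{1,\infty}_x}\|f\|_{H^2_x L^2_p}^2$ via H\"older and $H^1_x\hookrightarrow L^6_x$ in $d=3$. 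Stokes regularity together with $H^2_x\hookrightarrow W^{1,6}_x$ and $W^{2,6}_x\hookrightarrow W^{1,\infty}_x$ for $d\leq 3$ gives $\|\nabla\bu\|_{W^{1,\infty}_x}\les\|\bm{\Sigma}\|_{H^2_x}\les\|f\|_{H^2_x L^2_p}$, so altogether
\begin{equation*}
\tfrac{d}{dt}\|f\|_{H^2_x L^2_p}^2 + c\min(\nu,\kappa)\|f\|_{H^2_x L^2_p}^2 \les \|f\|_{H^2_x L^2_p}^3,
\end{equation*}
and smallness $\varepsilon \ll \nu$ absorbs the cubic remainder. A standard continuity argument based on the local well-posedness of Appendix~\ref{app:strongsols} then globalizes and yields decay at rate $\delta_0\nu$.

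The main obstacle is the non-local coupling $-d\overline{\psi}\,\nabla\bu:\bp\otimes\bp$: for large $\overline{\psi}$ it is not dominated by the viscosity $\nu\Delta_p$, so a perturbative treatment in $\overline{\psi}$ fails. The puller sign in the $H$-theorem is what resolves this, placing the coupling onto the dissipation side of the energy identity with a non-negative coefficient and decoupling the stability analysis from the magnitude of $\overline{\psi}$.
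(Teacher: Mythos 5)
Your observation that the puller sign $\iota = +$ places the nonlocal term $d\overline\psi\|\nabla\bu\|_{L^2_x}^2$ on the dissipative side of the energy identity—so that the coupling may be treated non-perturbatively in $\overline\psi$—is correct and is the same structural point the paper exploits in Theorem~\ref{thm:linTayDisp}. Your overall strategy (a direct $H^2_x L^2_p$ energy estimate closed by smallness) is also genuinely different from the paper's, which runs a Duhamel bootstrap against the semigroup $S(t)$ using the linear Taylor dispersion estimate and the smoothing estimate of Corollary~\ref{cor:smoothingestf}.

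However, there is a gap in your coercivity step. You obtain damping of the $\bp$-average $\tilde f(\bx) := \barint_{S^{d-1}} f\,d\bp$ only from the translational diffusion $\kappa\|\nabla_x f\|^2$, so your Poincar\'e argument yields a decay rate $\min(\nu,\kappa)$, not the claimed $\nu$. These differ whenever $\kappa < \nu$, a regime the theorem does not exclude (only $\kappa>0$ and $\nu\le 1$ are assumed, and $\delta_0$ is an absolute constant in the paper's proof). The rate $\nu$ is precisely the content of Taylor dispersion: the operator $\bp\cdot\nabla_x - \nu\Delta_p$ on its own damps the density at rate $\nu$, which the paper captures through the macro-micro hypocoercivity correction $G = \langle |\nabla_x|^{-2}\nabla_x\varrho, \bm m\rangle$ in the proof of Theorem~\ref{thm:linTayDisp}. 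Without an analogous correction term in your energy functional, this $\kappa$-independent damping of the density is invisible, and you prove a strictly weaker statement. To repair the argument within your framework you would need to augment $\tfrac12\|f\|_{H^2_xL^2_p}^2$ with $\varepsilon G$ (and its $\partial_x^\alpha$ analogues) and redo the bookkeeping as in Theorem~\ref{thm:linTayDisp}. As a secondary, more minor issue: when $\partial_x^\alpha$ falls fully on $\nabla\bu$ in the Jeffery nonlinearity, the resulting commutator pairs against $\nabla_p\partial_x^\alpha f$ (there is no longer a $\tfrac12\nabla_p(f^2)$ structure to exploit) and must be absorbed into the $\nu$-dissipation, producing a remainder of order $\nu^{-1}\|f\|_{H^2_xL^2_p}^4$ rather than the cubic term you display; this is still absorbable under $\varepsilon\ll\min(\nu,\kappa)$, but the differential inequality as written is not the one that closes.
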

Notably, there is no size restriction on $\overline{\psi}$. 

\subsubsection{Enhanced dissipation}

We now address the $k \geq \nu$ regime in~\eqref{eq:dampingoftheting}-\eqref{eq:dampingcoeffoftheting}. Define $\lambda_{\nu} = \lambda_{\nu,1}$ in \eqref{eq:dampingcoeffoftheting}. If we consider~\eqref{eq:thisisfordemonstration} on $\T^d$, then the minimal non-zero wavenumber is $k=1$, and we see that the modes with $k \geq 1$ are damped in the enhanced dissipation time $O(\lambda_{\nu}^{-1})$, logarithmically slower than $O(\nu^{-1/2})$, when $\nu \leq 1$. The decay of the $k = 0$ mode $f_0(\bp,t) := \barint_x f(\bx,\bp,t) \, d\bx$ is not enhanced, as it solves the heat equation:
\begin{equation}
    \p_t f_0 - \nu \Delta_p f_0 = 0 \, .
\end{equation}
To extract the enhanced dissipation timescale, we define $f_{\neq} := f - f_0$. 
%This is also the case in the linearized equations around non-zero $\overline{\psi}$.

For sufficiently small $\overline\psi$, we demonstrate that perturbations to the equilibrium $\overline{\psi}$ exhibit nonlinear enhanced dissipation in the case of both pullers and pushers. 

%Stability of the uniform isotropic equilibrium for pushers at small $\overline\psi$ is covered by the following theorem. 

\begin{theorem}[Nonlinear enhanced dissipation for small concentrations]\label{thm:nonlin_enhance}
Let \\
$\nu,\kappa>0$, $0\le f^{\rm in}\in H^2_xL^2_p(\T^d \times S^{d-1})$, and  $\overline\psi\ll\lambda_\nu$. If $0<\varepsilon_0\ll \min(\kappa^{1/2},\nu^{1/2})\lambda_\nu^{1/2}$ and 
\begin{equation}
\varepsilon:=\norm{f_{\neq}^{\rm in}}_{H^2_xL^2_p} \le \varepsilon_0 
\quad \text{and} \quad
\norm{f^{\rm in}_0}_{L^2_p}\le \varepsilon_0 \,,
\end{equation}
then the nonzero modes of the strong solution $\psi = \overline{\psi} + f$ to \eqref{eq:smoluchowski}--\eqref{eq:activestress} on $\T^d$ satisfy the enhanced decay rate
\begin{equation}
\norm{f_{\neq}(\cdot,t)}_{H^2_xL^2_p} \lesssim e^{-\delta_{\neq} \lambda_\nu t}\varepsilon
\end{equation}
for some $\delta_{\neq}>0$. Furthermore, the zero mode satisfies the bound
\begin{equation}\label{eq:f0X0est0}
\norm{f_0}_{L^2_p} \lesssim e^{-\delta_0\nu t}\bigg(\norm{f^{\rm in}_0}_{L^2_p}+ \nu^{-1/2} \lambda_\nu^{-1/2} \varepsilon^2 \bigg) 
\end{equation}
for some $\delta_0>0$.
\end{theorem}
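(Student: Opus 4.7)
The plan is to decompose the perturbation $f = \psi - \overline\psi$ into its spatial average and oscillatory parts, $f = f_0 + f_{\neq}$ with $f_0(\bp,t) = \barint_x f\,d\bx$, and run a continuity argument exploiting two distinct linear decay mechanisms: enhanced dissipation at rate $\lambda_\nu$ for the non-zero modes $f_{\neq}$, and the spherical heat equation at rate $\nu$ for the zero mode $f_0$. Because the fluid velocity $\bu$ depends only on $f_{\neq}$ (a spatially constant density produces a zero active-stress divergence on $\T^d$), the coupled system takes the form
\begin{align*}
\p_t f_{\neq} + \bp\cdot\nabla_x f_{\neq} - \nu\Delta_p f_{\neq} - \kappa\Delta_x f_{\neq} - d\overline\psi\,\nabla\bu:\bp\otimes\bp &= N_{\neq},\\
\p_t f_0 - \nu\Delta_p f_0 &= N_0,
\end{align*}
where $N_0 = -\div_p[(\bI-\bp\otimes\bp)\langle\nabla\bu\,f_{\neq}\rangle_x \bp]$ is quadratic in $f_{\neq}$ and $N_{\neq}$ collects $-\bu\cdot\nabla_x f_{\neq}$, the non-zero-mode part of $-\div_p[(\bI-\bp\otimes\bp)\nabla\bu\bp f_{\neq}]$, and the bilinear coupling $-\div_p[(\bI-\bp\otimes\bp)\nabla\bu\bp f_0]$.

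On the linear side, I would invoke the mode-by-mode enhanced dissipation from Theorem~\ref{thm:Phi} (combined with Theorem~\ref{thm:linTayDisp} for low modes), upgrade it to $H^2_xL^2_p$ by translation invariance in $\bx$, and retain the non-local $d\overline\psi\nabla\bu:\bp\otimes\bp$ term as a sub-critical perturbation. The assumption $\overline\psi\ll\lambda_\nu^{-1}$ is what keeps the Volterra equation for $\nabla\bu$ (a viscous analogue of the Penrose/Landau-damping analysis) in the stable regime, producing
$$\|e^{tL_0}g\|_{H^2_xL^2_p}\lesssim e^{-\delta_{\neq}\lambda_\nu t}\|g\|_{H^2_xL^2_p}$$
for the restriction $L_0$ of the linear operator to non-zero $x$-modes. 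For the zero mode, the sphere Laplacian has a spectral gap on the orthogonal complement of constants, and since $N_0$ is a $\bp$-divergence the $\bp$-mean of $f_0$ is preserved (and vanishes for perturbations of an equilibrium), so $\|e^{t\nu\Delta_p}f_0\|_{L^2_p}\lesssim e^{-\delta_0\nu t}\|f_0\|_{L^2_p}$.

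With these ingredients I would run the bootstrap on $[0,T^*]$ under the inductive hypotheses
$$\|f_{\neq}(t)\|_{H^2_xL^2_p}\le 2C_0 e^{-\delta_{\neq}\lambda_\nu t}\varepsilon, \qquad \|f_0(t)\|_{L^2_p}\le 2C_1 e^{-\delta_0\nu t}\bigl(\|f_0^{\rm in}\|_{L^2_p}+\nu^{-1}\varepsilon^2\bigr).$$
The $f_0$ bound is closed by Duhamel against the sphere heat semigroup, using $\|N_0\|_{L^2_p}\lesssim\|f_{\neq}\|_{H^2_xL^2_p}^2$, which follows from Stokes regularity $\|\bu\|_{H^3_x}\lesssim\|f_{\neq}\|_{H^2_xL^2_p}$. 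For the $f_{\neq}$ bound, the purely quadratic contributions in $N_{\neq}$ are handled by Duhamel against $e^{tL_0}$ and produce $\lesssim\varepsilon^2/\lambda_\nu$. The delicate piece is the bilinear coupling $\div_p[(\bI-\bp\otimes\bp)\nabla\bu\bp f_0]$; here I would test it against $f_{\neq}$ in a weighted energy estimate, integrate by parts in $\bp$ to produce $\int\nabla\bu\bp f_0\cdot\nabla_p f_{\neq}$, and absorb the $\nabla_p f_{\neq}$ factor into the $\nu\|\nabla_p f_{\neq}\|_{L^2}^2$ dissipation via weighted Young's inequality. This yields an extra contribution $\lesssim\|f_{\neq}\|^2\|f_0\|^2/\nu$; requiring it to be controlled by $\lambda_\nu\|f_{\neq}\|^2$ gives $\|f_0\|^2\ll\nu\lambda_\nu$, which matches $\varepsilon_0\ll\nu^{1/2}\lambda_\nu^{1/2}$. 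The parallel argument for the advection term $\bu\cdot\nabla_x f_{\neq}$ (absorbed into $\kappa\|\nabla_x f_{\neq}\|_{L^2}^2$) yields the $\kappa^{1/2}\lambda_\nu^{1/2}$ threshold.

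The principal obstacle is the asymmetric $f_0$-$f_{\neq}$ coupling: $f_0$ decays only at the slow rate $\nu$, much slower than the enhanced rate $\lambda_\nu\gg\nu$, yet $f_0$ feeds \emph{linearly} (not quadratically) into the $f_{\neq}$ equation via $\nabla\bu\,f_0$. A naive semigroup Duhamel estimating this coupling by $\|f_{\neq}\|\|f_0\|$ pays a factor $1/\nu$ from the $f_0$-timescale and fails to close at the stated threshold; the integration-by-parts trick above is essential because it lifts the dependence on $f_0$ from $\|f_0\|$ to $\|f_0\|^2/\nu$, aligning all smallness thresholds at $\min(\nu,\kappa)^{1/2}\lambda_\nu^{1/2}$. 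A secondary but non-trivial technical challenge is verifying that the enhanced dissipation rate $\lambda_\nu$ of the full operator $L_0$ persists under the non-local perturbation up to the threshold $\overline\psi\ll\lambda_\nu^{-1}$, which requires adapting the Volterra-kernel analysis of Theorem~\ref{thm:landaudamping} to the viscous setting.
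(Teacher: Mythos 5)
Your decomposition, bootstrap structure, and quantitative thresholds all agree with the paper's proof of Theorem~\ref{thm:nonlin_enhance}; the main legitimate difference is stylistic. Where you propose to estimate the bilinear coupling $\div_p[({\bf I}-\bp\otimes\bp)\nabla\bu\bp\,f_0]$ by a direct weighted energy identity (integrate by parts in $\bp$, absorb $\nu\|\nabla_p f_{\neq}\|^2$), the paper instead estimates the Duhamel integral of $S(t-s)\div_p\bm{g}$ via the smoothing-in-$\bp$ estimate of Lemma~\ref{lem:smoothp}(ii), which is nothing but your energy estimate wrapped in a duality argument. Both produce the $\nu^{-1/2}$ factor and lead to the same requirement $\|f_0\|\ll\nu^{1/2}\lambda_\nu^{1/2}$; neither is more powerful. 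Your instinct to isolate the coupling as ``the principal obstacle'' is sound, and the closing arithmetic you give (matching the threshold $\min(\kappa^{1/2},\nu^{1/2})\lambda_\nu^{1/2}$) is correct.

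Two claims in your write-up are, however, misleading or wrong. First, you assert that ``naive semigroup Duhamel \ldots pays a factor $1/\nu$ from the $f_0$-timescale and fails to close.'' It does not: the convolution kernel $e^{-c_{\neq}\lambda_\nu(t-s)}$ from the $f_{\neq}$ semigroup controls the time integral, and since the forcing $\nabla\bu[f_{\neq}]f_0$ carries a factor $e^{-\delta_{\neq}\lambda_\nu s}$ from $f_{\neq}$, the $ds$-integral costs only $\lambda_\nu^{-1}$ (or $\lambda_\nu^{-1/2}$ with the $L^2_t$ smoothing). The $\nu^{-1}$ loss arises elsewhere---from the $\div_p$ smoothing and from $\|f_0\|_{X_T}\les\|f_0^{\rm in}\|_{L^2_p}+\nu^{-1}\varepsilon^2$---and the paper's Duhamel-plus-smoothing argument closes cleanly. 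Second, and more seriously, you suggest that extending the Volterra/Penrose analysis of Theorem~\ref{thm:landaudamping} to the viscous operator $\bp\cdot\nabla_x-\nu\Delta_p$ is a ``non-trivial technical challenge'' needed to establish linear enhanced dissipation in the presence of the nonlocal term. This is not what the paper does, and it would be substantially harder: Corollary~\ref{cor:psibar} treats the $d\overline\psi\nabla\bu:\bp\otimes\bp$ term purely perturbatively, via a single Duhamel absorption against the $e^{-c_{\neq}\lambda_\nu t}$ bound of $e^{t\mathcal{L}_\nu}$, at the cost of the smallness condition $\overline\psi\ll\lambda_\nu$. (Note also the threshold is $\overline\psi\ll\lambda_\nu$, consistent with the title ``small concentrations''; the viscous Penrose condition, which would permit large $\overline\psi$ below a dispersion-relation threshold, is explicitly listed as an open problem in Section~1 and is not used.)
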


Note that the notion of ``small'' $\overline\psi$ here is larger than might be immediately expected for a perturbative result: $\lambda_\nu$ rather than $\nu$.

\bigskip

We highlight a useful heuristic regarding the enhancement timescale $\nu^{-1/2}$. %may be predicted as follows.
Consider the PDE~\eqref{eq:thisisfordemonstration}
%\begin{equation}
    %\p_t f + \bp \cdot \nabla_x f - \nu \Delta_p f = 0 
%\end{equation}
in dimension $d=2$ and, writing $\bp=\cos\theta\be_1+\sin\theta\be_2$, consider a plane wave solution (note the time rescaling)
\begin{equation}
f(\bx,\bp,t) = h(\theta, s) e^{i \bk \cdot \bx}, \quad \bk = k \be_1, \quad s=kt \, .
\end{equation}
 Then $h$ satisfies
\begin{equation}
    \p_s h + i \cos \theta h - \frac{\nu}{k} \p_\theta^2 h = 0 \, .
\end{equation}
Heuristically, the evolution $\p_s h + i \cos \theta h = 0$, which dominates for short times, creates large gradients in $\theta$. This can be seen from applying $\p_\theta$ to the explicit formula $h=e^{-i\cos\theta s} h^{\rm in}$. Once these gradients are sufficiently large, the evolution $\p_s h - \nu \p_\theta^2 h / k = 0$ smooths them; this is the source of the ``enhanced dissipation."

The enhancement is slowest near $\p_\theta\cos \theta=0$, where the ``shearing" caused by $i \cos \theta$ is slowest. We examine the critical point $\theta=0$ more closely. The characteristic scales at $\theta = 0$ are $\hat{\theta} = (\nu/k)^{1/4}$ and $\hat{s} = (\nu/k)^{-1/2}$, in the sense that if we ``zoom in" at $\theta = 0$ by writing $ H(\Theta,S) = e^{is} h(\theta,s)$, where $\Theta = \theta/\hat{\theta}$ and $S = s/\hat{s}$, we see the following leading order behavior in $\nu/k$:
\begin{equation}
\label{eq:shearlayerpde}
    \p_S H +   \frac{i}{2}\Theta^2 H - \p_\Theta^2 H \approx 0 \, .
\end{equation}

We note a strong analogy between \eqref{eq:shearlayerpde} and the evolution of a passive scalar in a two-dimensional shear flow $\bu = (b(y),0)$:
\begin{equation}
    \label{eq:passivescalarshaerflow}
    \p_t f + b(y) \p_x f - \nu \Delta_{x,y} f = 0 \text{ on } \T^2  \times \R_+ \, .
\end{equation}
 Around a critical point $y_0$ at which $b'(y_0) = \cdots = b^{(N)}(y_0) = 0$ but $b^{(N+1)}(y_0) \neq 0$, a \emph{shear layer} where the scalar $f$ is dissipated more slowly becomes readily visible in simulations; its characteristic length scale in $y$ is $\nu^{1/(N+3)}$, and its characteristic ``enhancement" timescale is $\nu^{-(N+1)/(N+3)}$, which is known to be sharp~\cite{drivascotizelati}. Our setting corresponds to $N=1$, which is akin to Poiseuille flow, and the PDE~\eqref{eq:shearlayerpde} is approximately satisfied inside our ``shear layer" at $\theta = 0$.
 
 \bigskip
 
 Finally, we mention briefly the techniques involved in Theorem~\ref{thm:nonlin_enhance}. First, in Theorem~\ref{thm:Phi}, we prove linear enhanced dissipation for $\overline{\psi} = 0$ via the hypocoercivity method (see, for example,~\cite{beckwaynebar,jacobmicheleshear,gallay2021enhanced} in the context of shear flows~\eqref{eq:passivescalarshaerflow}). The hypocoercivity method has the advantage of being elementary, although without adjustment it produces a logarithmic loss in the exponent $\lambda_\nu$ compared to $\nu^{1/2}$; this is technical in nature, and there are methods to remove it~\cite{kolmogorovhypocoercivitywei,dongyiweiresolvent,Albritton2022}.  %\lo{Will need to mention hypocoercivity and some related literature}
 To prove the nonlinear enhanced dissipation in Theorem~\ref{thm:nonlin_enhance}, we rely crucially on the structure of the nonlinear terms. On the whole, our approach is partially inspired by~\cite{michelepoiseuille}.

\subsection{Comparison with existing literature}

\subsubsection{Classical kinetic theory}

%(Assume hypocoercivity already discussed, etc.)

While the kinetic model has similarities with the Vlasov-Poisson equation~\eqref{eq:vlasovpoisson} and its collisional cousins, a key difference is that the phase mixing effect of the swimming term $\bp \cdot \nabla_x$ is not as strong as that of the free-streaming term $\bv \cdot \nabla_x$. For example, solutions $e^{-it\bv \cdot \nabla_x} f^{\rm in}$ on the torus decay exponentially to their mean-in-$\bx$, provided the initial data is analytic-in-$\bv$. Meanwhile, solutions $e^{-it\bp \cdot \nabla_x} f^{\rm in}$ decay only polynomially, as $t^{-\frac{d-1}{2}}$, which is not even time-integrable. This is the difference between non-stationary versus stationary phase, see Lemma~\ref{lem:stationaryphase}. Luckily, the particular structure of the nonlocal term allows us to improve the decay rate of $\nabla \bu$ by a factor of $t^{-1}$. The enhanced dissipation rates are different as well: $e^{-c\nu^{1/2} t}$ rather than $e^{-c\nu^{1/3} t}$ observed in, e.g.,~\cite{BedrossianWang2019}. Finally, while the nonlocal term $\nabla_x \bu : \bp \otimes \bp$ in the linearization might be compared to $\bm{E} \cdot \nabla_v \bar{f}$ in~\eqref{eq:vlasovpoisson}, there is no general theory to handle these nonlocal terms, and each is treated separately. %Luckily, in the inviscid setting, the Volterra equation for $\nabla_x \bu$ still produces a simple Penrose condition which is always verified when $\iota = +$.
%Below, we explain why further results known in kinetic theory do not easily extend to~\eqref{eq:smoluchowski}-\eqref{eq:activestress}.

%This is analogous to shear flows with critical points, whose decay is inherently limited.
   %A second feature is the nonlocal term  $\nabla_x \bu : \bp \otimes \bp$ in the linearization, which might be compared to $\bm{E} \cdot \nabla_v \bar{f}$ in the Vlasov--Poisson setting. There is no general theory to handle these nonlocal terms; each is treated separately. In the inviscid setting, the Volterra equation for $\nabla_x \bu$ luckily produces a simple Penrose condition which is always verified when $\iota = +$. In the viscous setting, the nonlocal term can be incorporated into the energy method we use in the case $\iota = +$, even for arbitrarily large $\overline{\psi}$; we did not succeed in incorporating it into the hypocoercivity scheme.

%%%%%%%%%%%%%%%%%%%%%%%%%%%%%%%%%%%%%%%%%%%%%%

%Generally speaking, it would be interesting to develop a general theory of Landau damping and enhanced dissipation for a wide class of models.

%``general mathematical theory" of Landau damping / enhancement for a wide class of physical models

\subsubsection{Complex fluids}
The role of swimming near the homogeneous isotropic equilibrium was considered by \v{S}kult{\'e}ty et al. in \cite{vskultety2020swimming}. The authors work directly with the BBGKY hierarchy associated to a stochastic many-particle system for which \eqref{eq:smoluchowski}-\eqref{eq:activestress} is essentially the formal mean-field limit. They calculate spatial and temporal correlations, indicative of collective behavior, of fluctuations around the equilibrium, sSection 2.D therein, and find that they are suppressed by swimming. This involves solving a Volterra equation similar to the one we obtain in Section~\ref{sec:landaudamping} below.

We now turn our attention to the PDE literature.
Although there is a vast PDE literature on related models without swimming (see the survey~\cite{masmoudi2018equations}), 
%the analysis of~\eqref{eq:smoluchowski}-\eqref{eq:activestress} is in its early stages.
to the authors' knowledge, there is a single PDE work on the model \eqref{eq:smoluchowski}-\eqref{eq:activestress}. In \cite{chen2013global}, Chen and Liu establish the existence of global weak entropy solutions to~\eqref{eq:smoluchowski} coupled with the Navier-Stokes equations or the Stokes equations~\eqref{eq:stokes}-\eqref{eq:activestress} for the velocity. Notably, in dimension two, the weak solutions they construct in the Stokes setting are \emph{unique}. The regularity theory of~\eqref{eq:smoluchowski}-\eqref{eq:activestress} is complicated by the absence of a maximum principle for the density $\varrho$.

The system \eqref{eq:smoluchowski}-\eqref{eq:activestress} belongs to a broader class of micro-macro models commonly used to describe passive immersed polymers. Such models couple a microscopic description of the immersed particles (in the form of a Fokker-Planck equation which depends on a configuration variable $\bp$) to a macroscopic description of the suspension in the form of a forced Stokes or Navier-Stokes equation depending only on $\bx$. 

Different microscopic descriptions lead to two main types of models. The first treats the polymers as elastic dumbbells (two beads attached by a spring). We refer to \cite{la2020diffusive, masmoudi2018equations} for an overview of well-posedness results here. 
%but highlight some results on stability and long time behavior: \cite{jourdain2006long, masmoudi2008well}. 
In the second class of models (sometimes called Doi-type), to which the system \eqref{eq:smoluchowski}-\eqref{eq:activestress} belongs, the immersed polymers are treated as rigid rods. The many existence and uniqueness results for this class of models include
\cite{BaeTrivisa, chen2014existence, constantin2007regularity, constantin2008global, constantin2010global, la2019global, lions2007global, masmoudi2008global, otto2008continuity, sieber2020existence, sun2011global, zhang2008new}. Note that no swimming is included in these previous results.

Finally, we mention two works which do incorporate swimming.
Jiang et al. \cite{jiang2020coupled} provide a proof of local well-posedness for a microscopic “self-organized kinetic” model coupled with Navier-Stokes and rigorously justify the hydrodynamic limit to a macroscopic closure model. Further related work on swimmers includes Kanzler and Schmeiser \cite{kanzler2021kinetic}, who consider a kinetic transport model for myxobacteria in which the particles interact via collisions rather than through a surrounding fluid medium. They show existence, uniqueness, and decay to equilibrium for sufficiently large particle diffusivity using hypocoercivity.

%\lo{[Will do bullet points here...]}
\subsection{Future directions} The model~\eqref{eq:smoluchowski}-\eqref{eq:activestress} is in its early stages of development from a rigorous PDE perspective, and many questions remain. Below, we focus only on questions concerning the near-equilibrium behavior:

\subsubsection{Nonlinear Landau damping}
A natural but challenging question is whether solutions exhibit \emph{nonlinear} Landau damping in the absence of dissipation. A comparison with previous works on nonlinear Landau damping in the Vlasov-Poisson equation~\cite{mouhotvillani,bedrossianmasmoudimouhot,grenier2020landau} and inviscid damping in two-dimensional Euler near the Couette flow~\cite{bedrossianmasmoudiinviscid} indicate the potential difficulty of this question, especially since arbitrarily fast polynomial decay to equilibrium is not anticipated.

\subsubsection{Precise description of the Taylor dispersion}
It would be desirable to capture more precise asymptotics of the Taylor dispersion rather than the upper bound we prove in Theorem~\ref{thm:linTayDisp}. We expect this to be possible by homogenization or, in a different vein, the center manifold approach in~\cite{beckchaudharywayne}. There may be additional difficulties in capturing this at the nonlinear level. Furthermore, it would be interesting to prove the correct analogue of Theorem~\ref{thm:vacuumstab} on $\R^2$. In this context, the term $\bu \cdot \nabla_x \psi$ is ``critical" and \emph{a priori} could modify the leading order asymptotics. 

\subsubsection{Stable-in-$\nu$ Landau damping and the viscous Penrose condition}
Our expectation is that the Landau damping and enhanced dissipation phenomena persist for $0 < \nu \ll 1$ provided that the Penrose condition is satisfied. More specifically, we expect that whenever $\iota = +$ or $\overline{\psi} < \overline{\psi}^*$, then for sufficiently small $\nu \ll_{\overline{\psi}} 1$, the linearized equations exhibit Landau damping and enhanced dissipation.\footnote{\emph{Added in proof}. The very recent work~\cite{zelati2022orientation} has now achieved enhanced dissipation in the above regime.} See~\cite{chaturvedi2021vlasov} and~\cite{bmv1} for analogous theorems in the Vlasov--Poisson--Landau and two-dimensional Navier-Stokes settings. In our setting, the key difficulty seems to be to perturb the Penrose condition to positive viscosity. For this purpose, one requires Landau damping estimates for the viscous operator $\p_t f + \bp \cdot \nabla_x f - \nu \Delta_p f$ which appear to be unknown. This corresponds to stable-in-$\nu$ mixing estimates for shear flows with critical points. 

\subsubsection{Zero translational diffusion} The $\bx$-diffusion plays no role in our linear arguments and, in the nonlinear arguments, is only used to control the term $\bu \cdot \nabla_x f$ semilinearly. We anticipate that much of our analysis can be generalized to $\kappa = 0$, in which case the model is quasilinear-in-$\bx$. One possible approach would be to incorporate a term $\overline{\bu} \cdot \nabla_x$ into the Guo and hypocoercivity schemes, where $\overline\bu$ is a known function.

%%%%%%%%%%%%%%%%%%%%%%%%%%%%%%%%%%%%%%%%%%%%%%%%%%%%%%%%%%%%%%%%%%%%%%%
%%%%%%%%%%%%%%%%%%%%%%%%%%%%%%%%%%%%%%%%%%%%%%%%%%%%%%%%%%%%%%%%%%%%%%%
%%%%%%%%%%%%%%%%%%%%%%%%%%%%%%%%%%%%%%%%%%%%%%%%%%%%%%%%%%%%%%%%%%%%%%%

\section{Landau damping}
\label{sec:landaudamping}

In this section, we prove the linear Landau damping result of Theorem~\ref{thm:landaudamping}. We consider the linearized inviscid equations
\eqref{eq:lin_inviscid1}-\eqref{eq:Sigma} and perform a mode-by-mode analysis in $\bx$.
% \begin{align}
%     \p_t  f + \bp \cdot \nabla_x f - d \overline{\psi} \nabla \bu:\bp\otimes\bp &= 0 \label{eq:lin_inviscid1}\\
%     - \Delta \bu + \nabla q= \div \bm{\Sigma} \, , \quad \div \bu &= 0 \, , \label{eq:lin_inviscid2}
% \end{align}
% with active stress
% \begin{equation}\label{eq:Sigma}
%   \bm{\Sigma}[f](\bx,t) = \iota \int_{S^{d-1}} \bp\otimes\bp \,f(\bx,\bp,t) \, d\bp, \qquad \iota \in \{ \pm \} \, .
% \end{equation}
% Here $\overline{\psi} \geq 0$ is the constant background solution. Solutions satisfy 
% \begin{equation}
%     \frac{1}{2}\frac{d}{dt}\norm{f}_{L^2_{x,p}}^2  + \iota d\overline\psi \norm{\nabla\bu}_{L^2_x} =0 \, ,
% \end{equation}
% which is a linearized version of the $H$-theorem~\eqref{eq:Htheorem}.
%It will be convenient to consider the equation mode-by-mode in $\bx$. 
Writing $f(\bx,\bp,t) = h(\bp,t) e^{i\bk \cdot \bx}$, $\bk \in \R^d \setminus \{0\}$, we have that $\bu = \widehat{\bu}_k e^{i \bk \cdot \bx}$, where
\begin{equation}
    \wh{\bu}_k =  i\abs{\bm{k}}^{-1} ({\bf I} - \overline{\bk} \otimes\overline{\bk})\wh{\bm{\Sigma}}_k \overline{\bm{k}}  \, .
\end{equation}
Here $\overline{\bk} = \bk/|\bk|$,  $\bm{\Sigma}=\wh{\bm{\Sigma}}_ke^{i\bk\cdot\bx}$, and
\begin{equation}
    \wh{\nabla \bu}_k = - ({\bf I} - \overline{\bk}\otimes\overline{\bk}) \wh{\bm{\Sigma}}_k \overline{\bk} \otimes \overline{\bk} \, .
\end{equation}
Upon rotating, we may take $\bk = k \be_1$, and upon rescaling time and replacing $\overline{\psi}$ by $\overline{\psi}/k$, we may consider $\bk = \be_1$ without loss of generality. With these simplifications, observe that $\wh{\nabla \bu}$ (we omit the subscript $k$) has the following structure:
\begin{equation}\label{eq:gradu_hat}
\widehat{\nabla \bu}=-({\bf I}-\be_1\otimes\be_1)\wh{\bm{\Sigma}}\,\be_1\otimes\be_1  = -\begin{pmatrix}
0 & 0 & \cdots & 0 \\
\Sigma_{21} & 0 & \cdots & 0 \\
\vdots & \vdots & \ddots & \vdots \\
\Sigma_{d1} & 0 & \cdots & 0  \\
\end{pmatrix} \, .
\end{equation}
In particular, the matrix $\widehat{\nabla \bu}$ consists of nonzero entries only in the first column, with zero first row. The structure of this matrix, particularly the zero first entry, will be very important for achieving the decay rate in Theorem \ref{thm:landaudamping}.

The proof of Theorem \ref{thm:landaudamping} thus reduces to studying decay properties of the following equation for $h(\bp,t)$:  
\begin{equation}
    \p_t h + i p_1 h - d \overline{\psi} \wh{\nabla \bu} : \bp \otimes \bp = 0 \, .
\end{equation}
We begin by quantifying the decay when $\overline{\psi} = 0$. We study the free transport equation
\begin{equation}
\p_t h + i p_1 h = 0 \, , \quad h(\cdot,0) = h^{\rm in} \, ,
\end{equation}
whose solution 
\begin{equation}
    \label{eq:hissolution}
    h = e^{-ip_1 t} h^{\rm in} 
\end{equation}
decays due to phase mixing, which is measured in a negative Sobolev norm.

\begin{lemma}[Stationary phase estimates]
\label{lem:stationaryphase}
(i) If $h^{\rm in} \in H^{d-1}(S^{d-1})$ and $h$ is defined by~\eqref{eq:hissolution}, then
\begin{equation}
    \| h(\cdot,t) \|_{H^{-(d-1)}} \les \la t \ra^{-\frac{d-1}{2}} \| h^{\rm in} \|_{H^{d-1}} \, .
\end{equation}
(ii) If moreover $h^{\rm in} \in H^{d+1}(S^{d-1})$ and $h^{\rm in}(\be_1) = h^{\rm in}(-\be_1) = 0$, then
\begin{equation}
    \| h(\cdot,t) \|_{H^{-(d+1)}} \les\la t \ra^{-\frac{d-1}{2}-1} \| h^{\rm in} \|_{H^{d+1}} \, .
\end{equation}
Here $\la t \ra = \sqrt{1+t^2}$ is the Japanese bracket.
\end{lemma}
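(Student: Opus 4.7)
The plan is to use duality to reduce the estimate to a bound on the oscillatory integral
\[
I(t) := \int_{S^{d-1}} a(\bp)\, e^{-ip_1 t}\, d\bp\, , \qquad a := h^{\rm in}\, \bar\phi\, ,
\]
uniformly over test functions $\phi$ in the unit ball of $H^s(S^{d-1})$ with $s = d-1$ for (i) and $s = d+1$ for (ii). The phase $p_1 = \bp \cdot \be_1$ is a Morse function on $S^{d-1}$ with exactly two non-degenerate critical points at $\pm\be_1$; the classical stationary phase heuristic on a compact $(d-1)$-manifold then predicts decay $t^{-(d-1)/2}$, with leading coefficients proportional to $a(\pm\be_1)$, improved by an additional factor $t^{-1}$ once those amplitudes vanish.

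Concretely, I would parametrize $\bp = (s, \sqrt{1-s^2}\,\omega)$ with $s = p_1 \in [-1,1]$ and $\omega \in S^{d-2}$; integrating out $\omega$ reduces $I(t)$ to a one-dimensional Fourier transform
\[
I(t) = \int_{-1}^{1} A(s)\, e^{-its}\, ds\, , \qquad A(s) := (1-s^2)^{(d-3)/2}\, C(s)\, , \quad C(s) := \int_{S^{d-2}} a\bigl(s, \sqrt{1-s^2}\,\omega\bigr)\, d\omega\, .
\]
A key observation is that the angular symmetries $\int_{S^{d-2}} \omega^{\otimes(2k+1)}\, d\omega = 0$ eliminate all half-integer powers of $(1-s^2)$ in the Taylor expansion of $C$ about $\pm 1$, so that $C$ extends to $C^\infty[-1,1]$ with $H^k$-norm controlled by $\|a\|_{H^k(S^{d-1})}$ (via the co-area formula). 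The decay of $\hat A(t)$ is then dictated by the endpoint singularity $(1-s^2)^{(d-3)/2}$: splitting into dyadic neighborhoods of $\pm 1$ and an interior, applying IBP in $s$ on the interior (where the cutoff kills all boundary contributions) and using the explicit Fourier-Bessel asymptotics of $(1-s^2)^{(d-3)/2}$ near the endpoints, one obtains $|\hat A(t)| \les \la t \ra^{-(d-1)/2}\, \|C\|_{H^{d-1}}$. Combined with the algebra property of $H^{d-1}(S^{d-1})$, valid since $d-1 > (d-1)/2$, which gives $\|a\|_{H^{d-1}} \les \|h^{\rm in}\|_{H^{d-1}} \|\phi\|_{H^{d-1}}$, this closes~(i) upon taking the supremum over $\phi$.

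For (ii), the vanishing hypothesis forces $C(\pm 1) = |S^{d-2}|\, a(\pm\be_1) = 0$, so the leading endpoint term disappears. Because $C \in C^\infty[-1,1]$ with Taylor expansion in integer powers of $(1-s^2)$, one in fact has $C(s) = O(1-s^2)$ near $\pm 1$ (the linear-in-$\omega$ correction from $\nabla a(\pm\be_1) \cdot \omega$ is killed by the $\omega$-average), hence $A(s) = O((1-s^2)^{(d-1)/2})$. The sharpened endpoint behavior upgrades the Fourier decay to $|\hat A(t)| \les \la t \ra^{-(d+1)/2}\, \|C\|_{H^{d+1}}$, and the $H^{d+1}$ algebra property yields~(ii).

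The main obstacle is the angular symmetry argument underpinning~(ii): one must verify that $C$ extends smoothly across $\pm 1$ despite the $\sqrt{1-s^2}$ in the parametrization, and that $C$ vanishes to \emph{second} order at the endpoints under the hypothesis $a(\pm\be_1) = 0$ (not just first order). Both reduce to the symmetry $\int_{S^{d-2}} \omega^{\otimes(2k+1)} d\omega = 0$, which kills the naive $O(\theta)$ term in the Taylor expansion of $a(\cos\theta, \sin\theta\,\omega)$ near each pole and is precisely what converts the expected $t^{-1/2}$ improvement (from first-order vanishing) into the full $t^{-1}$ improvement. A secondary bookkeeping task is propagating Sobolev norms from $a$ to $C$ to $A$ and finally into the Fourier decay estimate, which is routine once the smooth dependence of $C$ on $s$ near the endpoints has been established.
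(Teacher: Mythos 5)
Your proposal follows the same strategy as the paper: reduce by duality to the oscillatory integral $\int_{S^{d-1}} e^{-ip_1 t}\,h^{\rm in}\phi\,d\bp$ over $S^{d-1}$ and apply stationary phase at the non-degenerate critical points $\pm\be_1$, with the improved decay in (ii) coming from the vanishing of the amplitude there and with the product $h^{\rm in}\phi$ controlled in a Sobolev norm via Cauchy--Schwarz/algebra. The paper simply cites \cite[Chapter~8]{stein1993harmonic} for these asymptotics; you carry them out by hand in $(p_1,\omega)$ coordinates, which is a correct (if more laborious) implementation of the same argument, including the correct observation that the $\omega$-averaging is what makes the stationary-phase expansion proceed in integer powers of $t^{-1}$ and hence delivers the full $t^{-1}$ gain in (ii).
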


Recall that 
\begin{equation}
    \norm{h}_{H^k}^2 := \norm{h}_{L^2}^2+\norm{\nabla^k h}_{L^2}^2
\end{equation}
and 
\begin{equation}
    \norm{h}_{H^{-k}} := \sup_{\norm{g}_{H^k}=1}\abs{\int hg} \, , \quad k\ge 0\,.
\end{equation}

\begin{proof}
Our goal is to estimate the oscillatory integral
\begin{equation}\label{eq:osc}
    \int_{S^{d-1}} e^{-ip_1t} h^{\rm in} \phi \, d\bp \, ,
\end{equation}
where $\phi \in H^{m}(S^{d-1})$ is arbitrary and $m \in \{ d-1,d+1 \}$ depending on whether we are proving (i) or (ii). Then $h^{\rm in} \phi \in W^{m,1}(S^{d-1})$ and
\begin{equation}
    \label{eq:improvedmuhdecay}
    \| h^{\rm in} \phi \|_{W^{m,1}(S^{d-1})} \les \| h^{\rm in} \|_{H^m(S^{d-1})} \| \phi \|_{H^m(S^{d-1})} \, .
\end{equation}
If $m=d+1$ and $h^{\rm in}$ vanishes at $\be_1$ and $-\be_1$, then the function $h^{\rm in} \phi$ also vanishes at $\be_1$ and $-\be_1$. The desired estimates follow from the method of stationary phase, which describes the $t$-asymptotics of oscillatory integrals of the type \eqref{eq:osc}; see, for example,~\cite[Chapter 8]{stein1993harmonic}.
\end{proof}

%In particular, (ii) applies to the functions $p_1p_j$ and $p_j^2$, $j \neq 1$, and hence to the kernel $K$ in our derivation of the Penrose condition.

We now consider $\overline{\psi} \geq 0$ and seek a closed equation for $\wh{\nabla \bu}$.\footnote{It would also be natural to study a closed equation for the active stress $\bm{\Sigma}$. However, to see the improved decay, e.g., in~\eqref{eq:improvedmuhdecay}, we will need to take advantage of the structure of $\nabla\bu$. } %and how it appears in \eqref{eq:lin_inviscid1}.}
This type of argument is familiar from kinetic theory; see \cite[Chapter 3]{villani2010landau} regarding the Vlasov equation. By Duhamel's formula, we have
\begin{equation}\label{eq:hDuhamel}
    h(\bp,t) = e^{-ip_1t} h^{\rm in}(\bp) + d\overline{\psi} \int_0^t e^{-ip_1(t-s)} \widehat{\nabla \bu}(s) : \bp\otimes\bp \, ds\, .
\end{equation}
Multiplying \eqref{eq:hDuhamel} by $\iota \bp\otimes\bp$ and integrating in $\bp$, by definition of the active stress $\bm{\Sigma}$ \eqref{eq:Sigma}, we obtain 
\begin{equation}\label{eq:Sigma_eqn}
    \wh{\bm{\Sigma}}[h] = \wh{\bm{\Sigma}}[e^{-ip_1t} h^{\rm in}] + \iota d\overline{\psi}\int_0^t \bp\otimes\bp \,e^{-ip_1(t-s)}\widehat{\nabla \bu}(s) : \bp\otimes\bp \, ds \, .
\end{equation} 
Using the definition of $\wh{\nabla\bu}$ \eqref{eq:gradu_hat}, we may then multiply \eqref{eq:Sigma_eqn} on the left by $({\bf I}-\be_1\otimes\be_1)$ and on the right by $\be_1\otimes\be_1$ to obtain a Volterra equation for $\wh{\nabla\bu}$: 
\begin{equation}\label{eq:volterra}
\widehat{\nabla \bu}[h] = \widehat{\nabla \bu}[e^{-ip_1t}h^{\rm in}] - \iota d\overline{\psi}\int_0^t K(t-s)\, \wh{\nabla\bu}[h](s)\, ds \, .
\end{equation}
Here, $K(t)$ is an operator-valued kernel which acts on the tensor $\widehat{\nabla \bu}$ by
\begin{equation}\label{eq:Kgradu}
    K(t)\widehat{\nabla \bu} = ({\bf I}-\be_1\otimes\be_1)\bigg(\int_{S^{d-1}} \bp\otimes\bp \, e^{-ip_1 t} \, \widehat{\nabla \bu} : \bp\otimes\bp \, d\bp \bigg)\, \be_1\otimes\be_1 \, ,
\end{equation}
that is, for $j \geq 2$,
\begin{equation}\label{eq:Kgradu2}
    (K(t)\widehat{\nabla \bu})_{j1} = -\int_{S^{d-1}} p_1 p_j e^{-ip_1 t} \, \widehat{\nabla \bu} : \bp\otimes\bp \, d\bp \, ,
\end{equation}
and vanishes in all other components.

Notice that, due to the special structure \eqref{eq:gradu_hat} of $\wh{\nabla\bu}$, we may apply Lemma~\ref{lem:stationaryphase}, estimate (ii), to obtain the following decay estimates:
\begin{align}
    | \wh{\nabla \bu}[e^{-ip_1t} h^{\rm in}] | &\les \la t \ra^{-\frac{d-1}{2}-1} \| h^{\rm in} \|_{H^{d+1}(S^{d-1})} \, ,  \label{eq:whdatadecay}\\ 
    |K(t)\widehat{\nabla \bu} | &\les\la t \ra^{-\frac{d-1}{2}-1}  | \widehat{\nabla \bu} | \, . \label{eq:Kdecay}
\end{align}

To solve the Volterra equation \eqref{eq:volterra} for $\wh{\nabla\bu}$, as is standard, we begin by taking the Fourier-Laplace transform in time, which we denote by%, for $g\in L^2(\R_+)$, we denote by
\begin{equation}\label{eq:LapTrans}
    \cL[g](\lambda) = \int_0^{+\infty} e^{-\lambda t} g(t) \, dt \, , \qquad \lambda\in\C \, .
\end{equation}
\emph{A priori}, $\cL\wh{\nabla\bu}$ is only well defined for $\Re(\lambda) \gg \overline{\psi}$ when $\iota = -$, although it is automatically well defined for $\Re \lambda > 0$ when $\iota = +$. This can be seen from the energy estimate~\eqref{eq:linearizedhtheorem}. By the decay estimates~\eqref{eq:whdatadecay} and~\eqref{eq:Kdecay}, $\wh{\nabla \bu}[e^{-ip_1t} h^{\rm in}]$ and $K$ are time-integrable, and their Fourier-Laplace transforms are well defined and continuous for $\Re \lambda \geq 0$. We have
\begin{equation}
    \cL \wh{\nabla\bu}[h] = \cL \wh{\nabla\bu}[e^{-ip_1t} h^{\rm in}] - \iota d\overline{\psi}(\cL K)\cL\wh{\nabla\bu}[h] \, ,
\end{equation}
which formally can be solved for $\cL \wh{\nabla \bu}$:
\begin{equation}\label{eq:LUhat}
    \cL\wh{\nabla\bu}[h] = (I + \iota d\overline{\psi} \cL K)^{-1} \cL \wh{\nabla\bu}[e^{-ip_1t} h^{\rm in}] \, ,
\end{equation}
provided that $I + \iota d \overline{\psi} \cL K$ is invertible.
Note that the inverse in \eqref{eq:LUhat} is not a matrix inverse, since in this case $I$ and $\cL K$ are really linear operators acting on tensors of the form~\eqref{eq:gradu_hat} via \eqref{eq:Kgradu}.
As long as $(I + \iota d\overline{\psi} \cL K)^{-1}$ is finite, we can solve for $\cL\wh{\nabla\bu}$. We characterize the invertibility of $I + \iota d\overline{\psi} \cL K$ in the following lemma.

\begin{lemma}[Penrose condition]\label{lem:penrose}
The \emph{Penrose condition} 
\begin{equation}\label{eq:penrose}
    \sup_{\Re \lambda \geq 0} \| (I + \iota d\overline{\psi} \cL K)^{-1} \| \leq C < +\infty \, 
\end{equation}
is satisfied unconditionally in the case of pullers ($\iota=+$) with constant $C$ independent of $\overline{\psi}$.  For pushers ($\iota=-$), the condition \eqref{eq:penrose} is equivalent to %the integral equation 
\begin{equation}\label{eq:disp_rel}
\iota d\overline{\psi} \int_{S^{d-1}} \frac{p_1^2p_j^2}{\lambda+ip_1} \, d\bp =1
\end{equation}
having no solution for $\Re(\lambda)\ge 0$.
\end{lemma}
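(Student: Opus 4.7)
The plan is to reduce the operator-invertibility question to a scalar one by exploiting the structure \eqref{eq:gradu_hat} together with the symmetries of $S^{d-1}$. Given $\widehat{\nabla \bu}$ of that form, I would first expand $\widehat{\nabla \bu} : \bp \otimes \bp = \sum_{i \geq 2} (\widehat{\nabla \bu})_{i1}\, p_1 p_i$ and substitute into \eqref{eq:Kgradu2}; the $(j,1)$ component of $K(t)\widehat{\nabla \bu}$ becomes a linear combination of the integrals $\int_{S^{d-1}} p_1^2 p_i p_j\, e^{-i p_1 t}\, d\bp$ with $i,j \geq 2$. For $i \neq j$, the reflection $p_i \mapsto -p_i$ is a symmetry of $S^{d-1}$ making the integrand odd, so the cross-terms vanish; for $i=j$, rotational symmetry in the hyperplane $\{p_1 = 0\}$ makes $\mu(t) := \int_{S^{d-1}} p_1^2 p_j^2\, e^{-i p_1 t}\, d\bp$ independent of $j \geq 2$. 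Hence $K(t)$, restricted to matrices of the form \eqref{eq:gradu_hat}, acts as scalar multiplication by $\pm \mu(t)$.

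The next step is to take the Laplace transform. The decay bound \eqref{eq:Kdecay} gives $\mu \in L^1(\R_+)$, so $\cL K(\lambda)$ is well defined and continuous on the closed half-plane $\{\Re \lambda \geq 0\}$, and acts there as scalar multiplication by $\pm z(\lambda)$, where
\[
z(\lambda) := \int_{S^{d-1}} \frac{p_1^2 p_j^2}{\lambda + i p_1}\, d\bp.
\]
Consequently, $I + \iota d \overline{\psi}\, \cL K$ is invertible with uniformly bounded inverse on $\{\Re\lambda \geq 0\}$ if and only if the scalar $1 \pm \iota d \overline{\psi}\, z(\lambda)$ stays uniformly away from zero there; tracing the signs in the above reduction produces the stated dispersion relation.

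For the pullers half, the key input is the elementary identity
\[
\Re \frac{1}{\lambda + i p_1} = \frac{\Re \lambda}{|\lambda + i p_1|^2} \geq 0 \qquad \text{for } \Re \lambda \geq 0,
\]
which gives $\Re z(\lambda) \geq 0$ throughout the closed right half-plane. This confines $1 \pm \iota d \overline{\psi}\, z(\lambda)$ to a half-plane bounded away from the origin with constant of order $1$, yielding simultaneously the invertibility and the $\overline{\psi}$-independent operator-norm bound of \eqref{eq:penrose}. For the pushers half, the invertibility criterion is, by construction, precisely the statement that the stated algebraic equation in $\lambda$ has no root in $\{\Re \lambda \geq 0\}$.

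The main obstacle, beyond the sign bookkeeping, is upgrading pointwise non-vanishing of $1 \pm \iota d \overline{\psi}\, z(\lambda)$ to the uniform supremum estimate \eqref{eq:penrose} over the non-compact set $\{\Re \lambda \geq 0\}$. Near the imaginary axis one uses continuity of $z$ up to the boundary, guaranteed by the integrability of $\mu$; near $|\lambda| = \infty$ one uses decay $z(\lambda) \to 0$, obtained by dominated convergence (or a crude $|\lambda + i p_1|^{-1} \les \la \lambda \ra^{-1}$ estimate once $|\Re\lambda|$ or $|\Im \lambda| \gg 1$). Together these pinch the scalar factor away from zero both on a neighborhood of infinity and on a neighborhood of the boundary, leaving only a compact portion of the closed right half-plane on which invertibility follows from the hypothesized absence of dispersion-relation zeros by a continuity argument.
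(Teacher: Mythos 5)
Your proof is correct and takes essentially the same approach as the paper: by symmetry, $\cL K$ acts on tensors of the form \eqref{eq:gradu_hat} as scalar multiplication by $\gamma(\lambda) = \int_{S^{d-1}} p_1^2 p_j^2/(\lambda + i p_1)\,d\bp$, and for pullers the positivity $\Re \gamma \geq 0$ on $\{\Re\lambda \geq 0\}$ pins $1 + d\overline{\psi}\gamma$ away from zero. You additionally spell out the continuity-up-to-the-boundary and decay-at-infinity argument needed to upgrade pointwise invertibility to the uniform supremum bound \eqref{eq:penrose}, which the paper states without elaboration in the paragraph following the lemma.
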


Let $\overline{\psi}^*$ denote the supremum of $\psi$ such that the Penrose condition is satisfied for all $\overline{\psi} < \psi$. We will see in the course of the proof that indeed $\overline\psi^*>0$ is well defined.

The Fourier-Laplace transforms of $\cL K$ and $\cL \wh{\nabla \bu}[e^{-ip_1\cdot} h^{\rm in}]$ are analytic in $\Re \lambda > 0$ and continuous in $\Re \lambda \geq 0$. When the Penrose condition is satisfied, the right-hand side of~\eqref{eq:LUhat} is the unique analytic continuation of $\cL \wh{\nabla \bu}$ into $\Re \lambda > 0$. Using the notation $\lambda=\sigma+i\tau$, this analytic continuation belongs to $L^\infty_\sigma L^2_\tau(\{ \sigma > 0 \})$ and is known to attain its boundary data continuously. Hence, the Paley-Wiener theorem (see~\cite[Theorem 19.2, p. 372]{RudinRealComplex}) guarantees that $\cL \wh{\nabla \bu}$ is the Fourier-Laplace transform of an $L^2(\R_+)$ function, which must be $\wh{\nabla \bu}$ by uniqueness of the Fourier-Laplace transform. See also the argument in \cite[p. 41]{villani2010landau}.

We emphasize that if the integral equation \eqref{eq:disp_rel} has no solution (that is, the operator $I+\iota d\overline{\psi} \mathcal{L}K$ is invertible pointwise), then the Penrose condition \eqref{eq:penrose} is satisfied for some constant $C$ (that is, the operator is invertible uniformly). This is because $\cL K$ is continuous and decays as $|\lambda| \to +\infty$ with $\Re \lambda \geq 0$.

\begin{comment}
\begin{remark}[Analytic continuation]
When the Penrose condition is satisfied, we can demonstrate that $\wh{\nabla \bu} \in L^2$ and its Fourier-Laplace transform $\cL \wh{\nabla \bu}$ coincides with the expression in~\eqref{eq:LUhat} in $\Re \lambda \geq 0$. \emph{A priori}, $\cL \wh{\nabla \bu}$ is well defined and analytic in $\Re \lambda \ll \overline{\psi}$ by the definition of the Fourier-Laplace transform (in the case $\iota = +$, this is already known for $\Re \lambda > 0$ because $\widehat{\bu}$ is bounded). Furthermore, the Fourier-Laplace transforms of $\cL K$ and $\cL \wh{\nabla \bu}[e^{-ip_1\cdot} h^{\rm in}]$ are analytic in $\Re \lambda > 0$ and continuous in $\Re \lambda \geq 0$. When the Penrose condition is satisfied, the right-hand side of~\eqref{eq:LUhat} is the unique analytic continuation of $\cL \wh{\nabla \bu}$ into $\Re \lambda > 0$, and we take~\eqref{eq:LUhat} as the definition of $\cL \wh{\nabla \bu}$ in $\Re \lambda \geq 0$. 
For the approximations $\wh{\nabla \bu} G(\delta t)]$, where $G = e^{-t^2}$, % \ast [(2\pi \delta_2)^{-1} G(t/\delta_2)]$
the above inversion formula is a consequence of deforming the contour. Then $L^2$ is recovered upon sending $\delta \to 0^+$.
\end{remark}
\end{comment}

\begin{remark}
Note that the integral equation \eqref{eq:disp_rel} is precisely the dispersion relation arising in the eigenvalue problem for the linearized operator \eqref{eq:lin_inviscid1}, which has been studied in detail by various authors \cite{hohenegger2010stability,ohm2022weakly,saintillan2008instabilitiesPRL,saintillan2008instabilitiesPOF,saintillan2013active,subramanian2009critical, subramanian2011stability}. In particular, any $\lambda$ satisfying \eqref{eq:disp_rel} is an eigenvalue of the linearized operator \eqref{eq:lin_inviscid1}. In the pusher case ($\iota=-$), the Penrose condition reduces to the dispersion relation having no solution for $\Re(\lambda)\ge 0$, i.e. the linearized operator has no unstable or marginally stable eigenvalue.

The implicit dispersion relation \eqref{eq:disp_rel} can be solved numerically for $\lambda$ as a function of the parameter $\overline{\psi}$ (see \cite{hohenegger2010stability,saintillan2008instabilitiesPOF,saintillan2008instabilitiesPRL,saintillan2013active}; note that a different nondimensionalization from \eqref{eq:nondim} is commonly used) from which we observe that \eqref{eq:disp_rel} has a solution with $\Re(\lambda)\ge0$ for $\overline{\psi}\ge \overline{\psi}^*$ for some $\overline{\psi}^*$. In this situation the linearized equations \eqref{eq:lin_inviscid1}-\eqref{eq:lin_inviscid2} have growing modes which give rise to pattern formation and ``bacterial turbulence" observed in numerical simulations \cite{ohm2022weakly,saintillan2008instabilitiesPOF,saintillan2012emergence,saintillan2013active,saintillan2015theory}. %We note that the numerical solution to \eqref{eq:disp_rel} may be made rigorous via interval arithmetic. 

To satisfy the Penrose condition \eqref{eq:penrose} in the case of pushers, $\overline{\psi}$ must therefore be taken to be sufficiently small. The threshold value of $\overline{\psi}^*$ exactly corresponds to the eigenvalue crossing observed in numerical studies \cite{hohenegger2010stability,ohm2022weakly,saintillan2008instabilitiesPRL,saintillan2008instabilitiesPOF,saintillan2013active,subramanian2009critical}.
\end{remark}

\begin{proof}[Proof of Lemma \ref{lem:penrose} (Penrose condition)]
 To see how $\cL K$ acts on tensors $\bm{\Xi}$ of the form \eqref{eq:gradu_hat}, we first note that since $\bm{\Xi}$ is only nonzero in rows 2 through $d$ of the first column, we have
\begin{equation}
\bm{\Xi}: \bp\otimes\bp = \sum_{\ell=2}^d \Xi_{\ell1}p_\ell p_1 \, .
\end{equation}
Therefore, we may write $\cL K\bm{\Xi}$ as 
\begin{equation}\label{eq:LKXi}
\begin{aligned}
 \cL K\bm{\Xi}(\lambda) &= \int_0^\infty e^{-\lambda t}\int_{S^{d-1}} e^{-ip_1 t} \, \bm{M}(\bm{\Xi},\bp)  \, d\bp \, dt \\
 &= \int_{S^{d-1}} \frac{1}{\lambda+ip_1} \, \bm{M}(\bm{\Xi},\bp)  \, d\bp ,
 \end{aligned}
\end{equation}
where $\bm{M}(\bm{\Xi},\bp)$ is again a matrix of the form \eqref{eq:gradu_hat}, 
\begin{equation}
\bm{M}(\bm{\Xi},\bp) = \begin{pmatrix}
0 & 0 & \cdots & 0 \\
M_{21} & 0 & \cdots & 0 \\
\vdots & \vdots & \ddots & \vdots \\
M_{d1} & 0 & \cdots & 0  \\
\end{pmatrix},
\end{equation}
with 
\begin{equation}
M_{j1} = p_1^2p_j\sum_{\ell=2}^d \Xi_{\ell1}p_\ell.
\end{equation}

Since odd functions in $p_j$ integrate to zero on $S^{d-1}$ for each $j=1,\dots,d$, the integral of each $M_{j1}$ in \eqref{eq:LKXi} is only nonzero when $\ell=j$. Thus $\cL K$ acts on $\bm{\Xi}$ via entrywise multiplication: 
\begin{equation}
 (\cL K\Xi)_{j1} =  \gamma_j(\lambda)\Xi_{j1}, \qquad j\neq 1, 
\end{equation}
where 
\begin{equation}\label{eq:familiar_int}
\gamma_j(\lambda) = \int_{S^{d-1}} \frac{p_1^2p_j^2}{\lambda+ip_1} \, d\bp.
\end{equation}
In particular, we have that the operator $I + \iota d\overline{\psi} \cL K$ sends
\begin{equation}
I + \iota d\overline{\psi} \cL K : \;\; \Xi_{jm} \mapsto\begin{cases}
(1+\iota d\overline{\psi} \gamma_j)\Xi_{j1} & \text{if }m=1 \text{ and } j\neq 1 \\
\Xi_{jm} & \text{else}.
\end{cases}
\end{equation}
The invertibility of $I + \iota d\overline{\psi} \cL K$ \eqref{eq:penrose} thus comes down to verifying that 
\begin{equation}\label{eq:penrose2}
\iota d\overline{\psi} \gamma_j(\lambda)\neq -1
\end{equation}
 for all $\Re(\lambda)\ge0$. We must therefore analyze the integral $\gamma_j(\lambda)$ in greater detail.  Writing $\lambda=\sigma+i\tau$, we consider the behavior of $\Im(\gamma_j)$ and $\Re(\gamma_j)$ for $\sigma\ge 0$.

For the imaginary part, we need only to verify that $\Im(\gamma_j)$ is finite and uniformly bounded. This follows from the integrability of $K$. 
For $\sigma>0$, the real part of $\gamma_j$ is given by
\begin{equation}\label{eq:regamma}
\Re(\gamma_j) = \int_{S^{d-1}}\frac{\sigma}{\sigma^2+(\tau+p_1)^2}p_1^2p_j^2\, d\bp,
\end{equation}
which is clearly finite and nonnegative. 

In particular, $\gamma_j(\lambda)$ is well-defined with $\Re(\gamma_j)\ge 0$ for all $\lambda$ with $\Re(\lambda)\ge0$. In the case of pullers ($\iota=+$), we thus have that \eqref{eq:penrose2} is satisfied for any value of $\overline{\psi}$.
For pushers ($\iota=-$), on the other hand, in order to ensure that \eqref{eq:penrose2} is satisfied, $\overline{\psi}$ must be chosen such that
\begin{equation}
d\overline{\psi}\gamma_j(\lambda) \neq 1
\end{equation}
for any $\lambda$ with $\Re(\lambda)\ge0$. This is the Penrose condition.
\end{proof}

\begin{proof}[Proof of Theorem~\ref{thm:landaudamping} (Linear Landau damping)]
We return to our mode-by-mode analysis.
To begin, we observe that due to the time decay estimates~\eqref{eq:whdatadecay} and~\eqref{eq:Kdecay} of $\wh{\nabla \bu}[e^{-ip_1t}]$ and $K$, respectively, we have the following regularity estimates on the Fourier-Laplace transforms:
\begin{equation}
    \| \cL \wh{\nabla \bu}[e^{-ip_1\cdot} h^{\rm in}](\sigma + i\tau) \|_{H^s_\tau} \les_s \| h^{\rm in} \|_{H^{d+1}_p} \, ,
\end{equation}
\begin{equation}
    \label{eq:regularityofLk}
    \| \cL K(\sigma + i\tau) \|_{H^s_\tau} \les_{s} 1 \, ,
\end{equation}
for all $\sigma \geq 0$ and $s \in [0,d/2)$. Consider the function
\begin{equation}
    F(m) = (1+\iota d \overline{\psi} m)^{-1} - 1 \, .
\end{equation}
Since $\iota = +$ or $\overline{\psi} < \overline{\psi}^*$, the Penrose condition implies that $F \circ \gamma_j$ is bounded. Since $F$ is smooth on an open neighborhood of the range of $\gamma_j$ and $F(0) = 0$, and in light of the regularity estimate~\eqref{eq:regularityofLk}, the composition $F \circ \gamma_j(\sigma+i\tau)$ belongs to $H^s_\tau$ for all $s \in [0,d/2)$. See, for example, Theorem 2.87 in~\cite{bcd}. Since $H^s_\tau$ is an algebra for $s > 1/2$, we have
\begin{equation}
    \| (I + \iota d\overline{\psi} \cL K)^{-1} \cL \wh{\nabla\bu}[e^{-ip_1\cdot} h^{\rm in}] \|_{H^s_\tau}  \les_{s,\overline{\psi}} \| h^{\rm in} \|_{H^{d+1}_p} \, ,
\end{equation}
for all $s \in (1/2,d/2)$. Finally, the equality~\eqref{eq:LUhat} and Fourier inversion on $\sigma = 0$  yield
\begin{equation}
    \int | \wh{\nabla \bu} |^2 \la t \ra^{2s} \, dt \les_{s,\overline{\psi}} \| h^{\rm in} \|_{H^{d+1}_p}^2 \, ,
\end{equation}
for all $s \in (1/2,d/2)$. This validates~\eqref{eq:nableudecaythm}. To complete the theorem, we must justify~\eqref{eq:gdecaythm}, namely, that
\begin{equation}\label{eq:gdef}
    g(\bp,t) := d \overline{\psi} \int_0^t e^{-ip_1(t-s)} \wh{\nabla \bu} : \bp \otimes \bp \, ds
\end{equation}
satisfies
\begin{equation}
    \int \| g(\cdot,t) \|_{H^{-(d+1)}_p}^2 \la t \ra^{d-\varepsilon} \, dt \les_{s,\overline{\psi}} \| h^{\rm in} \|_{H^{d+1}_p}^2 \, .
\end{equation}
Again, we may take the Fourier-Laplace transform, this time of \eqref{eq:gdef}, and observe that $e^{-ip_1t} p_1p_j$, $j \geq 2$, has the same time decay as the kernel $K$. Since $H^s_\tau$ is a multiplicative algebra, the proof follows. 
\end{proof}

%%%%%%%%%%%%%%%%%%%%%%%%%%%%%%%%%%%%%%

\begin{remark}[Pointwise-in-time decay]
The representation formula~\eqref{eq:LUhat} can be rewritten as
\begin{equation}
    \cL \wh{\nabla \bu} = \cL \wh{\nabla \bu}[e^{-ip_1 \cdot} h^{\rm in}] + (I + \iota d\overline\psi\cL K)^{-1} (-\iota d\overline\psi\cL K) \cL \wh{\nabla \bu}[e^{-ip_1 \cdot} h^{\rm in}] \, .
\end{equation}
In principle, one can obtain pointwise-in-time decay estimates on $\wh{\nabla \bu}$ by studying the Green's function $\cL^{-1} [(I + \iota d\overline\psi\cL K)^{-1} (-\iota d\overline\psi\cL K)]$ pointwise-in-time. A natural way to proceed is to study the optimal regularity of $\cL K(i\tau)$ in $L^1_\tau$-based spaces rather than $H^s_\tau$. One can obtain an explicit formula for $\cL K(i\tau)$ by sending $\sigma \to 0^+$. We leave its analysis to future work. %he expression for $\cL K(i\tau)$ is explicit. 

\end{remark}

%%%%%%%%%%%%%%%%%%%%%%%%%%%%%%%%%%%%%%%%%%%%%%%%%%%%%%%%%%%%%%%%%%%%%%
%%%%%%%%%%%%%%%%%%%%%%%%%%%%%%%%%%%%%%%%%%%%%%%%%%%%%%%%%%%%%%%%%%%%%%
%%%%%%%%%%%%%%%%%%%%%%%%%%%%%%%%%%%%%%%%%%%%%%%%%%%%%%%%%%%%%%%%%%%%%%
\section{Taylor dispersion}
%\locomment{May want to add $\kappa$ into the rates?}
In this section, we consider stability due to Taylor dispersion near vacuum $\overline{\psi} = 0$ in the whole space $\R^3$. Our linearized estimates will also allow us to prove stability of the puller equilibrium $\overline{\psi} \equiv \text{const.}$ on the torus $\T^d$, $d=2,3$. Notably, when $\iota = +$, stability holds regardless of the size of $\overline{\psi}$.

To begin, we study the linearized equation
\begin{equation}
    \label{eq:linearizedeq}
    \p_t  f + \bp \cdot \nabla_x f - \nu \Delta_p f - d\overline{\psi} \nabla 
    \bu : \bp \otimes \bp = 0 \, ,
\end{equation}
whose basic energy estimate is
\begin{equation}
    \label{eq:basicenergyestimate}
    \frac{1}{2} \frac{d}{dt} \| f \|_{L^2}^2 + d \iota \overline{\psi} \| \nabla \bu \|_{L^2}^2 = - \nu \| \nabla_p f \|_{L^2}^2 \, .
\end{equation}

Notice that the right-hand side of \eqref{eq:basicenergyestimate} only controls $f-\barint_{S^{d-1}}f\,d\bp$, and this by itself is not enough to prove exponential decay. In view of this, we consider the density (or concentration) $\varrho$ and momentum (or the nematic order parameter times $\varrho$) $\bm{m}$,
\begin{equation}
    \varrho(\bx,t) = \barint_{S^{d-1}} f(\bx,\bp,t) \, dp \, , \quad \bm{m}(\bx,t) = \barint_{S^{d-1}}\bp f(\bx,\bp,t)  \, d\bp \, .
\end{equation}
Here and in what follows, integration is with respect to the normalized measure on the sphere ($\barint_{S^{d-1}}\, d\bp=1$). The density and momentum satisfy the PDEs
\begin{equation}
    \label{eq:densityeq}
    \p_t \varrho + \div_x \bm{m} = 0 \, ,
\end{equation}
\begin{equation}
    \label{eq:momentumeq}
    \p_t \bm{m} + \div_x \left( \barint_{S^{d-1}} \bp\otimes \bp f\, d\bp \right) + \nu \lambda \bm{m} = 0 \, ,
\end{equation}
where $\lambda$ is the first nonzero eigenvalue of the Laplacian on the unit sphere.
To derive~\eqref{eq:densityeq}, we integrate~\eqref{eq:linearizedeq} in $\bp$ and observe that $\nabla \bu : \barint_{S^{d-1}} \bp \otimes \bp \, d\bp = d^{-1}\, {\rm tr} \; \nabla \bu = d^{-1} \, \div \bu = 0$ in the non-local term. To derive~\eqref{eq:momentumeq}, we integrate $\bp$ times \eqref{eq:linearizedeq} in $\bp$ and observe that $\barint_{S^{d-1}} \bp \Delta_p f \, d\bp = \lambda \barint_{S^{d-1}} \bp f \, d\bp = \lambda \bm{m}$ in the dissipation term and $(\nabla \bu : \bp \otimes \bp) \bp$ is odd-in-$\bp$ in the nonlocal term. Notice that~\eqref{eq:densityeq} does not contain a damping term for the density, whereas~\eqref{eq:momentumeq} already contains a damping term for the momentum. 

As mentioned in the introduction, our method is inspired by~\cite[Section 3]{BedrossianWang2019}, which itself is inspired by work of Guo. We introduce the macro-micro decomposition
\begin{equation}
    \label{eq:macromicro}
    f = \varrho + g \, .
\end{equation}
Assume that $\int_x \varrho = \iint_{x,p} f = 0$. 
Notice that
\begin{equation}
    \label{eq:mredef}
    \bm{m} = \barint_{S^{d-1}} \bp g \, d\bp \, .
\end{equation}
With the above decomposition~\eqref{eq:macromicro}, we have the following refinement of the flux in~\eqref{eq:momentumeq}:
\begin{equation}
    \barint_{S^{d-1}} \bp \otimes \bp f\, d\bp = \barint_{S^{d-1}} \bp \otimes \bp (\varrho + g) \, d\bp = \frac{1}{d} \varrho {\bf I}  + \barint_{S^{d-1}} \bp \otimes \bp g \, d\bp \, .
\end{equation}
 Thus,~\eqref{eq:momentumeq} can be rewritten as
\begin{equation}
    \label{eq:momentumeq2}
    \p_t \bm{m} + \frac{1}{d} \nabla_x \varrho + \div_x \left( \barint_{S^{d-1}} \bp \otimes \bp g \, d\bp \right) + \nu \lambda \bm{m} = 0 \, .
\end{equation}
Crucially,~\eqref{eq:momentumeq2} will produce the desired damping in $\varrho$ when integrated against $|\nabla_x|^{-2} \nabla_x \varrho$. Below, we refine the above reasoning to demonstrate

\begin{theorem}[Linear Taylor dispersion]\label{thm:linTayDisp}
Let $\nu > 0$, $\bk \in \R^d \setminus \{ 0 \}$, and  $k = |\bk|$. Suppose that $f = h(\bp,t) e^{i\bk \cdot \bx}$ solves the linearized PDE~\eqref{eq:linearizedeq} and $f^{\rm in} = h^{\rm in}(\bp) e^{i\bk \cdot \bx}$. There exists an absolute constant $c_0 > 0$ satisfying the following property: If $\iota = +$ or $\overline{\psi} \ll \mu_{\nu,k}$, then
\begin{equation}
    \| h(\cdot,t) \|_{L^2_p} \les e^{-c_0 \mu_{\nu,k} t} \| h^{\rm in} \|_{L^2_p} \, , \quad \forall t > 0 \, ,
\end{equation}
where
\begin{equation}
    \mu_{\nu,k} = \begin{cases}
    \nu & k \geq \nu \\
    \frac{k^2}{\nu} & k \leq \nu
    \end{cases} \, .
\end{equation}
\end{theorem}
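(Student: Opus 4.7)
My plan is to reduce to the stated plane-wave setting and then run a Guo-type macro--micro scheme in which a carefully weighted cross-term between the density and first momentum modes supplies the damping missing from the basic energy identity.

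After rotating so that $\bk = k\be_1$, the PDE becomes $\partial_t h + ik p_1 h - \nu \Delta_p h = d\overline\psi \,\widehat{\nabla\bu}:\bp\otimes\bp$ with $\widehat{\nabla\bu}$ of the form~\eqref{eq:gradu_hat}, and the basic $L^2_p$ identity reads
\begin{equation*}
\tfrac12\tfrac{d}{dt}\|h\|_{L^2_p}^2 + \nu\|\nabla_p h\|_{L^2_p}^2 + \iota d\overline\psi\,|\widehat{\nabla\bu}|^2 = 0.
\end{equation*}
I will decompose $h = \hat\varrho + g$ with $\hat\varrho := \barint h\,d\bp$ and $g$ its mean-free-in-$\bp$ part; Poincaré on $S^{d-1}$ then gives $\nu\|\nabla_p g\|^2 \gtrsim \nu\|g\|^2$, controlling the micro part but leaving $\hat\varrho$ undamped. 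Integrating the PDE in $\bp$, and separately against $p_1\,d\bp$, produces the macroscopic ODEs
\begin{equation*}
\partial_t\hat\varrho = -ik\,\hat m_1, \qquad \partial_t\hat m_1 + \tfrac{ik}{d}\hat\varrho + ik\,\hat N_{11} + \nu\lambda\,\hat m_1 = 0,
\end{equation*}
where $\hat m_1 := \barint p_1 g\,d\bp$, $\hat N_{11} := \barint p_1^2 g\,d\bp$, and $\lambda = d-1$ is the first nonzero eigenvalue of $-\Delta_p$ on $S^{d-1}$. The would-be source terms drop out by $\operatorname{tr}\widehat{\nabla\bu}=0$ and the odd parity of $\bp\otimes\bp\otimes\bp$.

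The crux is a Lyapunov functional of the form
\begin{equation*}
\mathcal{E}(t) := \|h\|_{L^2_p}^2 - \beta\,\Re\bigl(i\,\hat m_1\,\overline{\hat\varrho}\bigr), \qquad \beta \sim \mu_{\nu,k}/k.
\end{equation*}
Since $|\Re(i\hat m_1\overline{\hat\varrho})| \leq \|g\|_{L^2_p}|\hat\varrho|$, one has $\mathcal{E} \sim \|h\|_{L^2_p}^2$ whenever $\beta \leq 1$. Differentiating and substituting the ODEs above yields, as the new contribution, $-\beta\tfrac{k}{d}|\hat\varrho|^2 + \beta k|\hat m_1|^2 - \beta k\,\Re(\hat N_{11}\overline{\hat\varrho}) - \beta\nu\lambda\,\Im(\hat m_1\overline{\hat\varrho})$. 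Using $|\hat m_1|, |\hat N_{11}| \lesssim \|g\|_{L^2_p}$ together with Young's inequality at weight $\varepsilon \sim \beta k$, I can absorb half of the $|\hat\varrho|^2$-damping and bound the residual by $C\beta(k + \nu^2/k)\|g\|_{L^2_p}^2$. Setting $\beta \sim \nu k/(k^2+\nu^2) \sim \mu_{\nu,k}/k$ makes this residual at most $\nu\lambda\|g\|^2/2$, leaving
\begin{equation*}
\tfrac{d\mathcal{E}}{dt} \leq -c_1\nu\|g\|_{L^2_p}^2 - c_2\mu_{\nu,k}|\hat\varrho|^2 + 2\iota d\overline\psi\,|\widehat{\nabla\bu}|^2.
\end{equation*}
For pullers ($\iota=+$) the last term has the right sign for \emph{any} $\overline\psi \geq 0$; for pushers, the identity $\Sigma_{j1}=\iota\hat N_{j1}$ (for $j\geq 2$) gives $|\widehat{\nabla\bu}|^2 \leq \|g\|_{L^2_p}^2$, so the hypothesis $\overline\psi \ll \mu_{\nu,k}$ absorbs it into the $\nu\|g\|^2$ reservoir. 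Since $\mu_{\nu,k} \leq \nu$, a Grönwall argument yields $\mathcal{E}(t) \leq e^{-c_0\mu_{\nu,k} t}\mathcal{E}(0)$, from which the claimed bound on $\|h\|_{L^2_p}$ follows by $\mathcal{E} \sim \|h\|^2$.

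The main obstacle is pinning down $\beta$: it must simultaneously be large enough to produce $|\hat\varrho|^2$-damping at rate $\beta k$, small enough that the micro residual $\beta(k+\nu^2/k)\|g\|^2$ is swallowed by $\nu\|g\|^2$, and bounded by $1$ so that $\mathcal{E}$ remains coercive. The unique balance $\beta \sim \mu_{\nu,k}/k$ encompasses both regimes $k \gtrless \nu$ in a single formula and is what makes the inverse-in-$\nu$ (Taylor-dispersion) scaling $\mu_{\nu,k} \sim k^2/\nu$ emerge naturally in the low-frequency window $k \leq \nu$.
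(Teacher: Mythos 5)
Your proof is correct and follows essentially the same route as the paper: the cross-term $\Re(i\hat m_1\overline{\hat\varrho})$ is precisely the single-mode incarnation (up to the harmless normalization factor $1/k$) of the paper's Guo-type quantity $\langle|\nabla_x|^{-2}\nabla_x\varrho,\bm{m}\rangle$, and your unified choice $\beta\sim\nu k/(k^2+\nu^2)$ reproduces the paper's two rescaled cases $k\gtrless\nu$ in one formula. One small sign slip: the $\overline\psi$-term inherited from the energy identity should appear as $-2\iota d\overline\psi\,|\widehat{\nabla\bu}|^2$ in your final differential inequality (your subsequent sentence shows you have the correct sign in mind), and you might also note that your absorption via $|\widehat{\nabla\bu}|\lesssim\|g\|$ into the $\nu\|g\|^2$ reservoir actually only needs $\overline\psi\ll\nu$, slightly weaker than the stated $\overline\psi\ll\mu_{\nu,k}$.
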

% should technically say something about solution living where (in the energy class)

\begin{proof}
Upon rotating, we may assume that $\bk = k \be_1$ without loss of generality. Furthermore, upon rescaling time and replacing $\nu$ and $\overline{\psi}$ by $\nu/k$ and $\overline{\psi}/k$, respectively, we may assume $k=1$. Thus, we consider~\eqref{eq:linearizedeq} on $\T^d \times S^{d-1} \times \R_+$ below.

%It is sufficient to study~\eqref{eq:linearizedeq} on the torus, which we do below. On the whole space $\R^d$, one may consider the equation mode-by-mode in $x$: $f = f_k e^{ikx}$.  Thus, the whole space estimate follows from the torus estimate.

Define
\begin{equation}
    G = \la |\nabla_x|^{-2} \nabla_x \varrho, \bm{m} \ra \, ,
\end{equation}
where the brackets denote the $L^2$ inner product.
%This is very much like the rotation term in harmonic oscillator.
Using \eqref{eq:densityeq} and \eqref{eq:momentumeq2}, we compute
\begin{equation}
    \label{eq:twistedestimate}
\begin{aligned}
    \p_t G &= - \la |\nabla_x|^{-2} \nabla_x \div_x \bm{m}, \bm{m} \ra \\
    &\qquad - \la |\nabla_x|^{-2} \nabla_x \varrho, \frac{1}{d} \nabla_x \varrho + \div_x  \barint_{S^{d-1}} \bp \otimes \bp g \, d\bp + \nu \lambda \bm{m}  \ra \\
    &= \| \, |\nabla_x|^{-1} \div_x \bm{m} \|_{L^2}^2 - \frac{1}{d} \| \varrho \|_{L^2}^2 \\
    &\qquad - \la |\nabla_x|^{-1} \nabla_x \varrho, |\nabla_x|^{-1} \div_x  \barint_{S^{d-1}} \bp \otimes \bp g \, d\bp  \ra - \nu \lambda \la |\nabla_x|^{-2} \nabla_x \varrho, \bm{m} \ra  \\
    &\leq C_d \| g \|_{L^2}^2 - \frac{1}{2d} \| \varrho \|_{L^2}^2 - \nu \lambda G   \, ,
    \end{aligned}
\end{equation}
where we recall that $\bm{m}$ is estimated by $g$, see~\eqref{eq:mredef}.

Define
\begin{equation}
    \Phi = \frac{1}{2} \| f \|_{L^2}^2 + \varepsilon G \, .
\end{equation}
Since $G$ is not inherently sign-definite, we require $\varepsilon \ll 1$ to ensure
\begin{equation}
    C^{-1} \| f \|_{L^2}^2 \leq \Phi \leq C \| f \|_{L^2}^2 \, .
\end{equation}
We seek a differential inequality for $\Phi$ by summing~\eqref{eq:basicenergyestimate} and $\varepsilon$ times~\eqref{eq:twistedestimate}:
\begin{equation}
    \frac{d\Phi}{dt} \leq - d \iota \overline{\psi} \| \nabla \bu \|_{L^2}^2 - \nu c_d \| g \|_{L^2}^2 - \frac{\varepsilon}{2d} \| \varrho \|_{L^2}^2 - \varepsilon \nu \lambda G + C_d \varepsilon \| g \|_{L^2}^2 \, ,
\end{equation}
 where we employ the Poincar{\'e} inequality $c_d \| g \|_{L^2}^2 \leq \| \nabla_p f \|_{L^2}^2$ in~\eqref{eq:basicenergyestimate}. We require $\varepsilon \ll \nu$ to ensure that $C_d \varepsilon \| g \|_{L^2}^2$ may be absorbed:
 \begin{equation}
    \frac{d\Phi}{dt} \leq - d \iota \overline{\psi} \| \nabla \bu \|_{L^2}^2 - \frac{\nu c_d}{2} \| g \|_{L^2}^2 - \frac{\varepsilon}{2d} \| \varrho \|_{L^2}^2 - \varepsilon \nu \lambda G \, .
\end{equation}
The above inequality has $O(\nu)$ damping in $g$ and $O(\varepsilon)$ damping in $\varrho$.

Due to the rescaling of $\nu$ by $k$, we have two cases to consider, corresponding to long and short wavelengths, respectively: 

\emph{Case 1. $\nu \leq 1$.} In this setting, the requirement $\varepsilon \ll \nu$ is enough to close an estimate for $\Phi$, since $\varepsilon \nu \lambda |G| \ll \nu^2 \| f \|_{L^2}^2$, and hence this term may be absorbed into the $O(\nu)$ damping in $f$ and $\varrho$.

\emph{Case 2. $\nu \geq 1$.} In this setting, we further require $\varepsilon \ll 1/\nu$, say, $\varepsilon = \delta/\nu$. Then, using Young's inequality,
\begin{equation}
    \varepsilon \nu \lambda |G| = \delta \lambda |G| \leq \eta \nu \| g \|_{L^2}^2 + C \eta^{-1} \delta^2 \nu^{-1} \| \varrho \|_{L^2}^2 \, .
\end{equation}
We choose $\eta \ll 1$ and $\delta \ll C^{-1} \eta$ to ensure that the above term may be absorbed into the $O(\nu)$ damping in $f$ and $O(\delta/\nu)$ damping in $\varrho$.

In either case, we have
 \begin{equation}
    \frac{d\Phi}{dt} \leq - d \iota \overline{\psi} \| \nabla \bu \|_{L^2}^2 - 3c_0 \mu_{\nu,1} \Phi 
\end{equation}
for some $c_0>0$. When $\iota = +$, the first term on the right-hand side is non-positive. When $\iota = -$, we require $\overline{\psi} \ll \mu_{\nu,1}$ to absorb this term. The proof is complete. \end{proof}
 
We now define the linear operator 
\begin{equation}\label{eq:Lnukappa}
    -L_{\nu,\kappa}^{\overline\psi,\iota} f := \bp \cdot \nabla_x f - d\overline{\psi} \nabla \bu[f] : \bp \otimes \bp - \nu \Delta_p f - \kappa \Delta_x f 
\end{equation}
and its associated semigroup 
\begin{equation}\label{eq:semigrp}
    S(t) = e^{tL_{\nu,\kappa}^{\overline\psi,\iota}}\,.
\end{equation}
 Note that since Fourier multipliers in $\bx$ commute, we have 
 \begin{equation}\label{eq:add_in_kappa}
     e^{t\kappa\Delta_x}e^{t L_{\nu,0}^{\overline\psi,\iota}} = e^{t L_{\nu,0}^{\overline\psi,\iota}} e^{t\kappa\Delta_x} = S(t)\, .
 \end{equation}
 We show the following:
 
 \begin{corollary}[Smoothing estimates]
    \label{cor:smoothingestf}
    Let $T>0$. Suppose that $f = h(\bp,t)e^{i \bk \cdot \bx}$ solves 
 \begin{equation}
     \p_t f - L_{\nu,\kappa}^{\overline\psi,\iota} f = \div_p \bm{g} 
 \end{equation}
 with $f^{\rm in} = 0$ for some $\bm{g} = \hat{\bm{g}}_k(\bp,t) e^{i\bk \cdot \bx}$ for $t \in (0,T)$. Suppose that $\iota = +$.
  Then
  \begin{equation}
      \| h(\cdot,T) \|_{L^2_p}^2 \les \nu^{-1} \int_0^T e^{-2c_0 \mu_{\nu,k} (T-s)} \| \hat{\bm{g}}_k \|_{L^2_p}^2 \, ds \, .
  \end{equation}
 \end{corollary}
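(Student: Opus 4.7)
The plan is to adapt the modified-energy argument from the proof of Theorem~\ref{thm:linTayDisp} to incorporate the source term $\div_p \bm{g}$. As in that proof, we reduce to the case $k=1$ by rescaling time $t \mapsto kt$ (which also rescales $\nu$, $\kappa$, $\overline\psi$, and the forcing appropriately); after the rescaling, we want a bound of the form $\|h\|_{L^2_p}^2 \lesssim \nu^{-1}\int_0^T e^{-2c_0\mu_{\nu,1}(T-s)}\|\hat{\bm g}(s)\|_{L^2_p}^2\,ds$. Because $\iota = +$, the non-local contribution $d\overline\psi\|\wh{\nabla\bu}\|^2$ enters every identity with a favorable sign and can be simply discarded; there is no smallness hypothesis on $\overline\psi$ to track.

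Testing the Fourier-mode equation against $\bar h$ gives
\begin{equation*}
\frac{1}{2}\frac{d}{dt}\|h\|_{L^2_p}^2 + d\overline\psi\|\wh{\nabla\bu}\|^2 + \nu\|\nabla_p h\|^2 + \kappa k^2\|h\|^2 = -\operatorname{Re}\langle \hat{\bm g}, \nabla_p h\rangle_{L^2_p},
\end{equation*}
and Young's inequality controls the right-hand side by $\tfrac{\nu}{4}\|\nabla_p h\|^2 + \tfrac{C}{\nu}\|\hat{\bm g}\|_{L^2_p}^2$, the first term being absorbed into the dissipation. The source also perturbs the moment equations~\eqref{eq:densityeq}--\eqref{eq:momentumeq2}: since $\barint_{S^{d-1}} \div_p \hat{\bm g}\,d\bp = 0$ on the closed manifold, the density equation is unchanged, while the momentum equation acquires an inhomogeneity $-\barint_{S^{d-1}}(\bI - \bp\otimes\bp)\hat{\bm g}\,d\bp$. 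Its contribution to $\varepsilon\,dG/dt$ is bounded in absolute value by $C\varepsilon \||\nabla_x|^{-1}\varrho\|\,\|\hat{\bm g}\|$, which for our single mode with $k=1$ may be split via Young's inequality into a small multiple of $\|\varrho\|^2$ (absorbed into the $\tfrac{\varepsilon}{2d}\|\varrho\|^2$ damping established in Theorem~\ref{thm:linTayDisp}) plus $C\varepsilon\|\hat{\bm g}\|^2$. Since the proof of Theorem~\ref{thm:linTayDisp} requires $\varepsilon \ll \min(\nu, 1/\nu) \leq 1/\nu$ in both the long- and short-wavelength regimes, one has $C\varepsilon\|\hat{\bm g}\|^2 \leq C\nu^{-1}\|\hat{\bm g}\|^2$ as well.

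Combining these modifications with the differential inequality already derived for $\Phi = \tfrac{1}{2}\|f\|_{L^2}^2 + \varepsilon G$, we obtain
\begin{equation*}
\frac{d\Phi}{dt} + 3c_0 \mu_{\nu,1}\Phi \leq \frac{C}{\nu}\|\hat{\bm g}\|_{L^2_p}^2.
\end{equation*}
Gronwall's inequality and the equivalence $\Phi \sim \|h\|_{L^2_p}^2$ then yield the desired estimate at the rescaled level; reversing the time rescaling gives the statement for general $k$, with the factors of $k$ on both sides cancelling to leave the claimed $\nu^{-1}$ weight and rate $\mu_{\nu,k}$. The translational diffusion $\kappa\Delta_x$ commutes with the remaining operator, see~\eqref{eq:add_in_kappa}, and simply supplies an additional favorable damping factor $e^{-\kappa k^2 t}$ that may be absorbed into the exponential. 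The main technical point to verify is that the source contribution to $dG/dt$ does not degrade the $\nu^{-1}$ weight on the right-hand side; this is automatic because $\varepsilon$ is chosen so small relative to $\nu$ in the original proof that $\varepsilon \leq C/\nu$, and the source produces no terms worse than $C\nu^{-1}\|\hat{\bm g}\|^2$.
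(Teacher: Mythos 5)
Your proof is correct, but it proceeds by a genuinely different route from the paper. You rerun the energy estimate of Theorem~\ref{thm:linTayDisp} directly in the presence of the source: integrating $\div_p\bm{g}$ against $h$ and absorbing via Young's inequality (which is where the $\nu^{-1}$ factor arises), observing that the density equation is unaffected because $\barint_{S^{d-1}}\div_p\hat{\bm g}\,d\bp=0$, tracking the new inhomogeneity $-\barint(\bI-\bp\otimes\bp)\hat{\bm g}\,d\bp$ in the momentum equation, and closing the Gronwall argument for $\Phi=\tfrac12\|h\|^2+\varepsilon G$. The bookkeeping is sound: the source term in $\varepsilon\dot G$ contributes $C\varepsilon\|\hat{\bm g}\|^2$ after Young, and since the original proof forces $\varepsilon\ll\min(\nu,\nu^{-1})\le\nu^{-1}$, this is dominated by $\nu^{-1}\|\hat{\bm g}\|^2$, as you note; the time rescaling to $k=1$ and back converts the rate and weight into the stated form. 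The paper instead first derives the time-weighted homogeneous smoothing estimate $\nu\int_0^t e^{2c_0\mu_{\nu,k}s}\|\nabla_p S(s)f^{\rm in}\|^2\,ds\les\|f^{\rm in}\|^2$ (itself a quick consequence of the $\Phi$-inequality) and then obtains the inhomogeneous estimate by duality: the adjoint of $f^{\rm in}\mapsto\nabla_p S(\cdot)f^{\rm in}$ produces, after $t\mapsto T-t'$ and the reflection $\bp\to-\bp$, exactly the backward-in-time problem with $\div_p\bm{g}$ forcing. The duality route has the advantage of not revisiting the moment system at all, and it makes the correspondence between the two kinds of $\nabla_p$-smoothing transparent; your direct route is more self-contained and does not invoke the $\bp\to-\bp$ symmetry or the adjoint formulation. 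The only minor imprecision in your write-up is the remark about $\kappa k^2$ supplying a factor $e^{-\kappa k^2 t}$: this is true via the commutation $\eqref{eq:add_in_kappa}$, but it is not needed and is not captured by the $\Phi$-inequality itself (the paper simply factors it out and neither uses it).
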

 
 \begin{proof}
 Again, without loss of generality, $\bk = k \bm{e}_1$ with $k = |\bk|$, and we consider the PDEs on the torus. Observe that for mode-$\bk$ solutions of the initial-value problem,
    \begin{equation}
        \nu \int_0^{t} e^{2 c_0\mu_{\nu,k} s} \int_{x,p}  |\nabla_p S(s) f^{\rm in}|^2  \, d\bx \, d\bp \, ds \les \| f^{\rm in} \|_{L^2}^2 \, ,
    \end{equation}
    which is a simple corollary of the energy estimates in Theorem~\ref{thm:linTayDisp}. Consider the solution operator $f^{\rm in} \mapsto \nabla_p S(\cdot) f^{\rm in} : L^2 \to L^2_w(\T^d \times S^{d-1} \times (0,T))$, where $w$ refers to the weight $e^{c_0 \mu_{\nu,k} t}$ in the above estimate, and we restrict to mode-$\bk$ functions. The adjoint operator $L^2_{w^{-1}}(\T^d \times S^{d-1} \times (0,T)) \to L^2$ produces the $t'=0$ trace of the solution of the PDE
    \begin{equation}
        - \p_{t'} \wt f - \bp \cdot \nabla_x \wt f - d\overline{\psi} \nabla \bu[\wt f] : \bp \otimes \bp - \nu \Delta_p \wt f - \kappa \Delta_x \wt f = \div_p \bm{g}
    \end{equation}
    backward-in-time with $\wt f(\cdot,T) = 0$ and a given $\bm{g}$ in mode $\bk$. Its solution satisfies
    \begin{equation}
        \| \wt f(\cdot,0) \|_{L^2}^2 \les \nu^{-1} \int_0^{T} e^{-2 c_0 \mu_{\nu,k} s} \int_{x,p}  |\bm{g}|^2 \, d\bx \, d\bp \, ds \, .
    \end{equation}
    The desired estimate  is obtained by rewriting $t = T-t'$ and reflecting $\bp \to -\bp$. %This completes the proof.
 \end{proof}

\begin{proof}[Proof of Theorem \ref{thm:pullerstab} (Puller stability)]
 We consider the PDE for the perturbation $f$, namely,
\begin{equation}
    \p_t f - L_{\nu,\kappa}^{\bar\psi,\iota}f +\bu\cdot\nabla_x f + \div_p [(\bI - \bp \otimes \bp) \nabla \bu \bp f ] = 0 \, . 
\end{equation}
%Here, we choose  We assume that $\int f^{\rm in} \, dx \, dp = 0$, and this is propagated by the solutions.
Since $\iota=+$, we omit this from the notation. 
%Let $S(t) = e^{L_{\nu,\kappa}^{\bar\psi} t}$ be the associated semigroup.
%It is not difficult to demonstrate short-time existence and uniqueness: For each non-negative initial datum $f^{\rm in} \in H^2_x L^2_p \cap L^1_{x,p}$, there exists $T > 0$ and a unique non-negative solution $f \in C_t H^2_x L^2_p \cap C_t L^1_{x,p}$ on $\R^d \times S^{d-1} \times [0,T]$, which further satisfies mass conservation $\int f(\cdot,t) \, dx \, dp = \int f^{\rm in} \, dx \, dp$. The solution can be continued beyond time $T$ if and only if it remains bounded in $H^2_x L^2_p$. We call this solution the \emph{strong solution}, and it is this solution we analyze.
By Duhamel's formula, we have
\begin{equation}
    \label{eq:duhamelsformula}
    f(\cdot,t) = S(t) f^{\rm in} + B(f,f)(\cdot,t) \, ,
\end{equation}
where
\begin{equation}
    \label{eq:bdef}
    B(f,g) = B_1(f,g) + B_2(f,g) \, ,
    \end{equation}
    \begin{equation}
        \label{eq:b1def}
        B_1(f,g)(\cdot,t) = - \int_0^t S(t-s) \div_x (\bu[g] f)(\cdot,s) \, ds \, ,
    \end{equation}
\begin{equation}
    \label{eq:b2def}
    B_2(f,g)(\cdot,t) = - \int_0^t S(t-s) \div_p [(\bI-\bp \otimes \bp) (\nabla_x \bu[g] \bp)f](\cdot,s) \, ds \, .
\end{equation}
Our estimates will be in the function space
\begin{equation}
    X_T = \{ f \in C_t H^2_x L^2_p : \| f \|_{X_T} < +\infty \} \, ,
\end{equation}
where
\begin{equation}
    \| f \|_{X_T} := \sup_{t \in [0,T]} e^{\delta_0 \nu t} \| f(\cdot,t) \|_{H^2_x L^2_p} \, .
\end{equation}
Here, $\delta_0<c_0$, where $c_0$ is the decay rate from the linear theory (Theorem \ref{thm:linTayDisp} and Corollary \ref{cor:smoothingestf}). Note that since we are on $\T^d$ and $\nu\le 1$, only the case $\mu_{\nu,k}=\nu$ is relevant.

Suppose the \emph{bootstrap assumption}:
 $\| f \|_{X_T} \leq C_0 \varepsilon$, where $\varepsilon_0 > 0$ and $C_0 > 2$ will be determined in the course of the proof. We will demonstrate
\begin{equation}
    \| f \|_{X_T} \leq \frac{C_0}{2} \varepsilon \, .
\end{equation}
Hence, the bootstrap assumption can be propagated forward-in-time to complete the proof. This Gronwall-type strategy is common in the long-time behavior of nonlinear PDEs.

To begin, we estimate $B_1$:
    \begin{equation}
    \begin{aligned}
        \| B_1(f,f)(\cdot,t) \|_{H^2_x L^2_p} &\leq  \int_0^t \| S(t-s) \div_x (\bu f)(\cdot,s) \|_{H^2_x L^2_p} \, ds\\
        &\les \int_0^t (\kappa (t-s))^{-1/2} e^{-c_0 \nu (t-s)} \| \bu f(\cdot,s) \|_{H^2_x L^2_p} \, ds \, .
        \end{aligned}
    \end{equation}
    Since $H^2_x$ is an algebra, we have
    \begin{equation}
        \label{eq:b1est}
    \begin{aligned}
        \| B_1(f,f)(\cdot,t) \|_{H^2_x L^2_p} 
        &\les \kappa^{-1/2} C_0^2 \varepsilon^2 \int_0^t (t-s)^{-1/2} e^{-c_0 \nu  (t-s)} e^{-\delta_0 \nu s}  \, ds \, \\
        &\les \kappa^{-1/2}  \nu^{-1/2}  C_0^2 \varepsilon^2e^{-\delta_0 \nu t} \, .
        \end{aligned}
    \end{equation}
    
    Next, we estimate $B_2$. First, we revisit the smoothing estimate in Corollary~\ref{cor:smoothingestf}. If $\bm{g} \in X_t$, then we may conclude
    \begin{equation}
    \begin{aligned}
        \| h(\cdot,t) \|_{H^2_x L^2_p}^2 &\les \nu^{-1} \int_0^t e^{-2 c_0 \nu(t-s)} \| \bm{g} \|_{H^2_x L^2_p}^2 \, ds \\
        &\les \nu^{-1}  \| \bm{g} \|_{X_t}^2 \int_0^t e^{-2 c_0 \nu(t-s)} e^{-2\delta_0 \nu s} \, ds \\
        &\les \nu^{-2} \| \bm{g} \|_{X_t}^2 e^{-2\delta_0 \nu t} \, .
        \end{aligned}
    \end{equation}
    Plugging in $\bm{g} = (\bI-\bp \otimes \bp) (\nabla \bu \bp)f$ and employing that $H^2_x$ is an algebra, we have
    \begin{equation}
        \label{eq:b2est}
        \| B_2(f,f) \|_{X_T} \les \nu^{-1} C_0^2 \varepsilon^2  \, .
    \end{equation}
    
    Finally, the contribution from $f^{\rm in}$ is estimated as follows:
    \begin{equation}
        \label{eq:b3est}
        \| S f^{\rm in} \|_{X_T} \les \| f^{\rm in} \|_{H^2_x L^2_p} \les \varepsilon \, .
    \end{equation}
    
    To conclude the bootstrap argument, we estimate the sum of~\eqref{eq:b1est},~\eqref{eq:b2est}, and~\eqref{eq:b3est} from above. Specifically, we require that
    \begin{equation}
        \kappa^{-1/2} \nu^{-1/2} C_0^2 \varepsilon^2 + \nu^{-1} C_0^2 \varepsilon^2 + \varepsilon \ll C_0 \varepsilon \, .
    \end{equation}
    To ensure this, it is enough to require
    \begin{equation}
        \kappa^{-1/2} \nu^{-1/2} C_0 \varepsilon_0 + \nu^{-1} C_0\varepsilon_0 + C_0^{-1} \ll 1 \, .
    \end{equation}
     To conclude, we choose $C_0 \gg 1$ and $\varepsilon_0 \ll \min(\nu,\kappa)$. This completes the proof.
\end{proof}

Finally, we use the linear Taylor dispersion estimates of Theorem \ref{thm:linTayDisp} to prove Theorem \ref{thm:vacuumstab} on $\R^3$.
Our proof will utilize Besov spaces $(\dot B^s_{2,q})_x L^2_p$. We review them below, although we cannot review the whole theory here and therefore assume a certain familiarity, for example, see the presentation in~\cite[Chapter 2]{bcd}. Let $f \in L^2_{x,p}$. There exists a smooth function $\varphi$, compactly supported on the annulus $\{ 3/4 < |\xi| < 8/3 \}$, and satisfying that $\sum_j \varphi(2^{-j} \xi) = 1$ when $\xi \neq 0$. Define the Littlewood-Paley projections
\begin{equation}
    \widehat{P_j f} = \varphi(2^{-j} \cdot) \widehat{f} \, ,
\end{equation}
\begin{equation}
    P_{\leq j} := \sum_{k \leq j} P_k \, ,
\end{equation}
and $P_{< j}$, $P_{\geq j}$, $P_{> j}$ similarly. Then the Besov norms $\| \cdot \|_{(\dot B^{s}_{2,q})_x L^2_p}$, $(s,q) \in \R \times [1,\infty]$, are defined according to
\begin{equation}
    \| f \|_{(\dot B^{s}_{2,q})_x L^2_p} := \left\lVert \left( 2^{js} \| P_j f \|_{L^2_{x,p}(\R^d \times S^{d-1})} \right) \right\rVert_{\ell^q_j(\Z)} \, .
\end{equation}
Membership in the Besov space is determined by finiteness of the above norm.\footnote{When $s>d/2$ and $(s,q) \neq (d/2,1)$, the above Besov norms are merely seminorms, and corresponding Besov spaces are only well defined up to polynomials. We will not require this level of subtlety here.} When $q=2$, the Besov spaces are equal to the homogeneous Sobolev spaces.

We introduce the spaces $(\dot B^{s}_{2,q})_x L^2_p$ because they are $L^2$-based (hence, amenable to the $L^2$-based linear estimates we derived previously) yet live at different scalings and satisfy desirable embeddings, namely,
\begin{align}
    \| f \|_{L^\infty_x L^2_p} &\les \| f \|_{(\dot B^{d/2}_{2,1})_x L^2_p} \, ,\\
    \| f \|_{(\dot B^{-d/2}_{2,\infty})_x L^2_p} &\les \| f \|_{L^1_x L^2_p} \, .
\end{align}
 We mention also the real interpolation inequality
 \begin{equation}\label{eq:realinterp}
     \| f \|_{(\dot B^s_{2,1})_x L^2_p} \les \| f \|_{(\dot B^{s_1}_{2,\infty})_x L^2_p}^\theta \| f \|_{(\dot B^{s_2}_{2,\infty})_x L^2_p}^{1-\theta} \, ,
 \end{equation}
 where $s = \theta s_1 + (1-\theta) s_2$. Notably, elliptic regularity is well behaved in the above Besov spaces regardless of $q \in [1,+\infty]$. %, see... \da{[citation!]}
 
 Define 
 \begin{equation}
     Y = (\dot B^{d/2}_{2,1} \cap \dot H^2)_x L^2_p\, , \quad Z = (\dot B^{-d/2}_{2,\infty})_x L^2_p \cap Y \, .
 \end{equation}
 Note that $Y$ is a multiplicative algebra. For $T\in (0,+\infty]$, consider the function space
\begin{equation}\label{eq:rdef}
    \| f \|_{X_T} := \sup_{t \in (0,T)} r^{-1}(\nu t)  \| f(\cdot,t) \|_{(\dot B^{d/2}_{2,1} \cap \dot H^2)_x L^2_p} \, , \quad r(s):=\la s\ra^{-\frac{d}{2}}(\log(2+s))^2\,.
\end{equation}
Let $j_0 \in \Z$ be the greatest integer such that $2^{j_0} \leq \nu$; note that $j_0\leq 0$. 

To prove Theorem~\ref{thm:vacuumstab}, we require two further linear estimates, stated in Lemmas~\ref{lem:initialdataest} and \ref{lem:smoothinginpwholespacest}. For the remainder of the section, $S(t) = e^{tL^{0,\iota}_{\nu,k}}$, that is, $\overline\psi=0$.

\begin{lemma}
    \label{lem:initialdataest}
For $\nu \in (0,1]$ and $f^{\rm in} \in Z$, we have
\begin{equation}
    \label{eq:todemonstrate}
    \| S(\cdot) f^{\rm in} \|_{X_\infty} + \sup_{t \in (0,+\infty)} \| S(t)f^{\rm in} \|_{(\dot B^{-d/2}_{2,\infty})_xL^2_p} \les \| f^{\rm in} \|_{Z} 
\end{equation}
\end{lemma}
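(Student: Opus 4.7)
For $\overline\psi = 0$, the generator $L^{0,\iota}_{\nu,\kappa}$ reduces to $-\bp\cdot\nabla_x + \nu\Delta_p + \kappa\Delta_x$ and acts as a Fourier multiplier in $\bx$ (with operator-valued symbol in $\bp$), so each Littlewood--Paley projection $P_j$ commutes with the semigroup $S(t)$. Theorem~\ref{thm:linTayDisp} applied mode-by-mode on the frequency annulus $|\bk| \sim 2^j$ yields the dyadic decay estimate
\begin{equation*}
\|P_j S(t) f^{\rm in}\|_{L^2_x L^2_p} \les e^{-c_0 \mu_{\nu,2^j} t}\|P_j f^{\rm in}\|_{L^2_x L^2_p},
\end{equation*}
with $\mu_{\nu,2^j} \sim \nu$ when $j \ge j_0$ and $\mu_{\nu,2^j} \sim 2^{2j}/\nu$ when $j < j_0$. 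The $(\dot B^{-d/2}_{2,\infty})_x L^2_p$-bound in \eqref{eq:todemonstrate} follows at once from the trivial estimate $e^{-c_0\mu_{\nu,2^j}t}\le 1$ and the definition of the Besov norm as a supremum in $j$.

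To estimate the $\dot B^{d/2}_{2,1}$-piece of $\|S(t)f^{\rm in}\|_Y$, I would split the Littlewood--Paley sum at $j_0$, which is precisely the scale at which the regime of $\mu_{\nu,2^j}$ changes. The high-frequency tail $j \ge j_0$ contributes at most $e^{-c_0\nu t}\|f^{\rm in}\|_{(\dot B^{d/2}_{2,1})_x L^2_p}$, which decays much faster than $r(\nu t)$. For the low-frequency head $j < j_0$, I would invoke the embedding $\|P_j f^{\rm in}\|_{L^2_x L^2_p} \le 2^{jd/2}\|f^{\rm in}\|_{(\dot B^{-d/2}_{2,\infty})_x L^2_p}$, reducing matters to the scalar sum $\sum_{j<j_0}2^{jd}e^{-c_0 2^{2j}t/\nu}$. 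Substituting $\ell = j_0 - j$ and using $2^{j_0}\sim \nu$ recasts this as $\nu^d \sum_{\ell\ge 1}2^{-\ell d}e^{-c_0(\nu t)2^{-2\ell}}$.

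The crux is to show this last sum is $\les \la\nu t\ra^{-d/2}$. Its summand peaks near $\ell^* \approx \frac{1}{2}\log_2(\nu t)$: for $\ell \le \ell^*$ the elementary bound $e^{-x} \les_d x^{-d}$ gives the geometric series $\sum_{\ell \le \ell^*}(\nu t)^{-d}2^{\ell d} \les (\nu t)^{-d/2}$, while for $\ell > \ell^*$ the tail $\sum_{\ell > \ell^*}2^{-\ell d}\les 2^{-\ell^* d}\les (\nu t)^{-d/2}$; the trivial bound handles $\nu t \le 1$. Since $\nu\le 1$ absorbs the prefactor $\nu^d$, the low-frequency contribution is bounded by $\la\nu t\ra^{-d/2}\|f^{\rm in}\|_{(\dot B^{-d/2}_{2,\infty})_x L^2_p}\le r(\nu t)\|f^{\rm in}\|_Z$. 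The $\dot H^2$-component is handled identically with weight $2^{2j}$ in place of $2^{jd/2}$, giving the strictly faster low-frequency rate $\la \nu t\ra^{-(4+d)/4}$ for $d\in\{2,3\}$. The main technical obstacle is the dyadic summation itself: choosing the correct split point and tracking the $\nu$ powers introduced by $\mu_{\nu,2^j} = 2^{2j}/\nu$. Note that the generous $(\log(2+s))^2$ slack in $r(s)$ is not exploited in this lemma and appears to be reserved for the nonlinear argument.
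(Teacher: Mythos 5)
Your proposal is correct and takes essentially the same approach as the paper: commute $P_j$ with $S(t)$, apply the mode-by-mode decay of Theorem~\ref{thm:linTayDisp}, split the Littlewood--Paley sum at $j_0$, use the exponential rate for $j\ge j_0$ and the diffusive rate $2^{2j}/\nu$ together with the $\dot B^{-d/2}_{2,\infty}\to L^2$ embedding for $j<j_0$. The only presentational difference is in the dyadic summation: the paper compares $\sum_{j<j_0}2^{j(s_2-s_1)}e^{-\delta_0 2^{2j}t/\nu}$ to $\int_0^\infty y^\alpha e^{-y}\,dy$ and confines the fine analysis to $t>\nu^{-1}$, whereas you substitute $\ell=j_0-j$ and split the sum at $\ell^*\approx\tfrac12\log_2(\nu t)$; both yield $\la\nu t\ra^{-(s_2+d/2)/2}$ and are routine. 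Your side observations — that the $(\log(2+s))^2$ slack in $r$ is not used here, and that the $\dot H^2$ piece decays strictly faster — are also accurate.
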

\begin{proof}
We have the propagation estimates
\begin{equation}
    \sup_{t \in (0,+\infty)} \| S(t) f^{\rm in} \|_Z \leq \| f^{\rm in} \|_Z \, ,
\end{equation}
which imply the second half of the estimate in~\eqref{eq:todemonstrate}. This estimate will also be used to control the solution for $t \leq \nu^{-1}$,
so we focus on $t > \nu^{-1}$. We use the estimates in Theorem \ref{thm:linTayDisp}.
For $j \geq j_0$, we have exponential decay:
\begin{equation}
    \label{eq:exponentialdecaypj}
    \| S(t) P_j f^{\rm in} \|_{Y} \les e^{-\delta_0 \nu t} \| P_j f^{\rm in} \|_{Y} \, .
\end{equation}
For $j < j_0$, we have diffusive decay: for any real $s_2>s_1$,
\begin{equation}
    2^{j s_2} \| S(t) P_j f^{\rm in} \|_{L^2_{x,p}} \les 2^{j (s_2-s_1)} e^{-\delta_0 2^{2j} \nu^{-1} t} 2^{j s_1} \|  P_j f^{\rm in} \|_{L^2_{x,p}} \, .
\end{equation}
In particular, summing in $j < j_0$ and using that $\int_0^{+\infty}y^\alpha e^{-y}\,dy\les_\alpha 1$ for all $\alpha>0$, we have
\begin{equation}
    \| S(t) P_{< j_0} f^{\rm in} \|_{(\dot B^{s_2}_{2,1})_x L^2_p} \les_{s_2-s_1} (t/\nu)^{(s_1-s_2)/2} \|  P_{< j_0} f^{\rm in} \|_{(\dot B^{s_1}_{2,\infty})_xL^2_p} \, , \quad \forall s_2 > s_1 \,, \;  \forall t > 0 \, .
\end{equation}
Choosing $s_1 = -d/2$ and $s_2 \in \{d/2,2\}$, we obtain for $t > \nu^{-1}$,
\begin{equation}
    \label{eq:diffusivedecaypj}
\begin{aligned}
    \| S(t) P_{< j_0} f^{\rm in} \|_{(\dot B^{d/2}_{2,1} \cap \dot H^2)_x L^2_p} &\les (t/\nu)^{-d/2} \| P_{< j_0} f^{\rm in} \|_{(\dot B^{-d/2}_{2,\infty} )_x L^2_p}  \\
    &\les \nu^{d} (\nu t)^{-\frac{d}{2}} \| f^{\rm in} \|_{(\dot B^{-d/2}_{2,\infty} )_x L^2_p} \, .
    \end{aligned}
\end{equation}
Together,~\eqref{eq:exponentialdecaypj},~\eqref{eq:diffusivedecaypj}, and standard embeddings complete the proof of~\eqref{eq:todemonstrate}.
\end{proof}

\begin{lemma}
    \label{lem:smoothinginpwholespacest}
Let $d=3$, $T > 0$, and $\nu \in (0,1]$. Consider $\bm{g} = \bm{g}(\bx,\bp,t)$ satisfying
\begin{equation}
    \| \bm{g}(\cdot,t) \|_{(\dot B^{-d/2}_{2,\infty})_x L^2_p} \les r(\nu t) N \, , \quad \forall t \in (0,T) \, ,
\end{equation}
\begin{equation}
    \| \bm{g} \|_{X_T} \les N \, ,
\end{equation}
%\begin{equation}
    %\| g(\cdot,t) \|_{Y} \les r^2(\nu t) N \, ,
%\end{equation}
for $r$ as in \eqref{eq:rdef} and upper bound $N \geq 0$. Then
$q := \int_0^t S(t-s) \div_p \bm{g} \, ds$ satisfies
%as we anticipate are satisfied by $\nabla u f$.
%We demonstrate
\begin{equation}
    \label{eq:muhqest}
    \| q \|_{X_T} + \sup_{t \in (0,T)} \|q(\cdot,t)\|_{(\dot B^{-d/2}_{2,\infty})_xL^2_p} \les \nu^{-1} N \, .
\end{equation}
\end{lemma}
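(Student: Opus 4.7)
The plan is a Littlewood--Paley decomposition in $\bx$ combined with the frequency-localized smoothing estimate of Corollary~\ref{cor:smoothingestf}. Since $L^{0,\iota}_{\nu,\kappa}$ and $\div_p$ are Fourier multipliers in $\bx$, each Littlewood--Paley piece $P_j q$ solves the same evolution as $q$ with source $P_j \div_p \bm{g}$ and vanishing initial data. Applying Corollary~\ref{cor:smoothingestf} mode-by-mode inside the annulus $|\bk| \sim 2^j$ and integrating via Plancherel yields
\begin{equation*}
\|P_j q(\cdot,t)\|_{L^2_x L^2_p}^2 \les \nu^{-1} \int_0^t e^{-2 c_0 \mu_{\nu, 2^j}(t-s)} \|P_j \bm{g}(\cdot,s)\|_{L^2_x L^2_p}^2\, ds,
\end{equation*}
where $\mu_{\nu, 2^j} = \nu$ for $j \geq j_0$ and $\mu_{\nu, 2^j} = 2^{2j}/\nu$ for $j < j_0$. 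Since $\overline\psi = 0$, Corollary~\ref{cor:smoothingestf} applies regardless of the sign of $\iota$.

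The uniform $(\dot B^{-d/2}_{2,\infty})_x L^2_p$ bound is the easier half: using the hypothesis $\|P_j \bm{g}\|_{L^2} \les 2^{jd/2} r(\nu s) N$ together with the crucial fact that $r^2 \in L^1(\R_+)$ when $d = 3 > 2$, the time integral is bounded by $\nu^{-1}$ for any non-negative rate, so $2^{-jd/2} \|P_j q(t)\|_{L^2} \les \nu^{-1} N$ uniformly in $j$ and $t$. For the $X_T$ bound, I split high and low frequencies. In the high-frequency regime $j \geq j_0$, I use the $(\dot B^{d/2}_{2,1})_x L^2_p$ bound on $\bm{g}$ for moderate $j$ and the $\dot H^2_x L^2_p$ bound on $\bm{g}$ for large $j$ (the latter being needed for summability in $\dot B^{d/2}_{2,1}$, since $d/2 - 2 = -1/2 < 0$), together with the standard estimate
\begin{equation*}
\int_0^t e^{-2c_0 \nu(t-s)} r(\nu s)^2\,ds \les \nu^{-1} r(\nu t)^2,
\end{equation*}
proven by splitting at $s = t/2$ and exploiting the quasi-monotonicity of $r$ on $[t/2,t]$ together with $L^1$-integrability and exponential decay on $[0,t/2]$. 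This yields a high-frequency contribution to $\|q(t)\|_{(\dot B^{d/2}_{2,1} \cap \dot H^2)_x L^2_p}$ of order $\nu^{-1} r(\nu t) N$.

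The main obstacle is the low-frequency contribution $j < j_0$, where the heat rate $\beta_j := 2 c_0\, 2^{2j}/\nu$ is arbitrarily slow for very low $j$ and thus, taken naively, provides no $r(\nu t)$-type decay on $O(\nu^{-1})$ timescales. I resolve this via the refined splitting
\begin{equation*}
\int_0^t e^{-\beta_j(t-s)} r(\nu s)^2\, ds \les \nu^{-1} e^{-\beta_j t/2} + r(\nu t)^2 \min\bigl(t, \beta_j^{-1}\bigr),
\end{equation*}
again by splitting at $s = t/2$, combined with the $\Gamma$-function bound
\begin{equation*}
\sum_{j < j_0} 2^{j\alpha} e^{-\beta_j \tau/2} \les \min\bigl(\nu^\alpha,\, (\nu/\tau)^{\alpha/2}\bigr), \qquad \alpha > 0,
\end{equation*}
familiar from the proof of Lemma~\ref{lem:initialdataest} (obtained via $y = \beta_j \tau/2$ and $y^{\alpha/2 - 1} e^{-y} \in L^1_+$). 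Using the $(\dot B^{-d/2}_{2,\infty})_x L^2_p$ hypothesis $\|P_j \bm{g}\|_{L^2} \les 2^{jd/2} r(\nu s) N$ at low frequencies, the first term summed against $2^{jd}$ contributes at most $\nu^{-1}(\nu/t)^{d/2} N$, which is bounded by $\nu^{-1} r(\nu t) N$ since $(\nu/t)^{d/2} = \nu^d (\nu t)^{-d/2} \les \nu^d r(\nu t) \les r(\nu t)$ for $\nu \le 1$. The second term, after splitting the $\min$ at $2^{2j} = \nu/t$, contributes at most $\nu^{(d-1)/2} t^{(1-d)/2} r(\nu t) N + \nu^{d-1} r(\nu t) N \ll \nu^{-1} r(\nu t) N$. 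A parallel calculation with $\alpha = d + 4$ in place of $\alpha = d$ handles the $\dot H^2$ component of the $X_T$ norm.
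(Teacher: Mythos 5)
Your overall strategy---Littlewood--Paley decomposition in $\bx$ plus the frequency-localized smoothing estimate of Corollary~\ref{cor:smoothingestf}---is the same as the paper's. Your treatment of the low frequencies $j<j_0$ is a valid alternative to the paper's: you split the Duhamel integral at $s=t/2$ and then sum in $j$ via a $\Gamma$-function bound, whereas the paper inserts a factor $|j|^{3}/|j|^{3}$, proves the pointwise bound $|j|^3\,2^{2j(\sigma+d/2)}e^{-c2^{2j}\la t-s\ra}\les\la t-s\ra^{-(\sigma+d/2)}(\log(1+\la t-s\ra))^3$, and sums with the $|j|^{-3/2}$ weight. Both routes are of comparable difficulty. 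One small correction: the $\Gamma$-function bound gives $\min(\nu^d,(\nu/t)^{d/2})$ and you should retain the $\min$; the inequality $(\nu t)^{-d/2}\les r(\nu t)$ you invoke fails for $\nu t<1$, where one must fall back on the $\nu^d$ branch. This is immediate from what you already have.

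The genuine gap is in the high-frequency range $j\ge j_0$, specifically the moderate band $j_0\le j\le 0$, which contains $|j_0|\sim\log\nu^{-1}$ values of $j$. You propose to use the $(\dot B^{d/2}_{2,1})_xL^2_p$ bound on $\bm{g}$ there, but that hypothesis only controls $\sum_j 2^{jd/2}\|P_j\bm{g}\|_{L^2}$; its best termwise consequence is $\|P_j\bm{g}(s)\|_{L^2}\les 2^{-jd/2}r(\nu s)N$, which after Corollary~\ref{cor:smoothingestf} gives $2^{jd/2}\|P_jq(t)\|_{L^2}\les\nu^{-1}r(\nu t)N$ uniformly in $j$ with no geometric gain, and summing over the $\sim\log\nu^{-1}$ scales in $[j_0,0]$ produces $\nu^{-1}(\log\nu^{-1})\,r(\nu t)N$, a logarithmic loss over the target $\nu^{-1}r(\nu t)N$. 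The fix is to use the \emph{other} hypothesis in this band: $\|\bm{g}(s)\|_{(\dot B^{-d/2}_{2,\infty})_xL^2_p}\les r(\nu s)N$ gives the termwise bound $\|P_j\bm{g}(s)\|_{L^2}\les 2^{jd/2}r(\nu s)N$ and hence $2^{jd/2}\|P_jq(t)\|_{L^2}\les\nu^{-1}2^{jd}r(\nu t)N$, a geometrically summable series over $j\le 0$. The paper sidesteps this entirely by first estimating $\|P_{\ge j_0}q(t)\|_{\dot H^\sigma}$ for $\sigma\in[0,2]$ (an $\ell^2_j$ quantity where Fubini applies) and then applying the real interpolation inequality~\eqref{eq:realinterp}; that works precisely because the $\dot B^{-d/2}_{2,\infty}$ and $\dot B^{d/2}_{2,1}\cap\dot H^2$ hypotheses on $\bm{g}$ are combined, via interpolation, into $\|\bm{g}(s)\|_{\dot H^\sigma}\les r(\nu s)N$ for all $\sigma\in[0,2]$.
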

\begin{proof}
We require the estimates in Theorem \ref{thm:linTayDisp} and Corollary \ref{cor:smoothingestf}. To begin, we use Corollary \ref{cor:smoothingestf} and $e^{-c_0 \mu_{\nu,j}(t-s)} \leq 1$ to prove that, for all $j \in \Z$, we have
\begin{equation}
  \label{eq:trivialpropest}
\| \int_0^t P_j S(t-s) \div_p \bm{g} \, ds \|_{L^2_{x,p}}^2 \les \nu^{-1} \int_0^t \| P_j \bm{g}(\cdot,s) \|_{L^2_{x,p}}^2 \, ds \, .
\end{equation}
For the $(\dot B^{-d/2}_{2,\infty})_x L^2_p$ estimate, observe that the right-hand side of~\eqref{eq:trivialpropest} is bounded above by
\begin{equation}
\nu^{-1} 2^{jd} \int_0^t r(\nu s) \, ds \sup_{s \in (0,+\infty)} r^{-1}(\nu s) \| \bm{g}(\cdot,s) \|_{(\dot B^{-d/2}_{2,\infty})_x L^2_p}^2 \les \nu^{-2} 2^{jd} N^2 \, .
\end{equation}
Crucially, we asked that $d=3$ so that the decay rate $r$ is time-integrable.
Multiplying by $2^{-jd}$ and taking a supremum in $j$ already yields the desired estimate on the second term on the left-hand side of~\eqref{eq:muhqest}.

A different way to estimate~\eqref{eq:trivialpropest} is to multiply by $2^{2\sigma}$, $\sigma \in [0,2]$, and sum in $j$ to obtain
\begin{equation}
\| \int_0^t S(t-s) \div_p \bm{g} \, ds \|_{\dot H^\sigma_x L^2_p}^2 \les \nu^{-1} \int_0^t \| \bm{g}(\cdot,s) \|_{\dot H^\sigma_x L^2_p}^2 \, ds \les \nu^{-2} N^2 \, .
\end{equation}
Interpolating in $\sigma$ using \eqref{eq:realinterp}, we additionally have
\begin{equation}
    \| \int_0^t S(t-s) \div_p \bm{g} \, ds \|_{(\dot B^{d/2}_{2,1})_x L^2_p} \les \nu^{-1} N \, .
\end{equation}
For $t \leq \nu^{-1}$, this yields the desired estimate on the $Y = (\dot B^{d/2}_{2,1} \cap \dot H^2)_x L^2_p$ norm.

It remains to estimate the $Y$ norm for $t \geq \nu^{-1}$. This is done by breaking into high and low frequencies:

For $t \geq \nu^{-1}$ and $j \geq j_0-1$, we use the exponential decay estimates
\begin{equation}
\begin{aligned}
   \| P_{j \geq j_0} q(\cdot,t) \|_{\dot H^\sigma_xL^2_p}^2 &\les \sum_{j \geq j_0} 2^{2j\sigma} \| \int_0^t S(t-s) \div_p P_j \bm{g} \, ds \|_{L^2_{x,p}}^2 \\
   &\les \nu^{-1} N^2 \int_0^t e^{-2c_0 \nu (t-s)} r^2(\nu s) \, ds \les N^2 \nu^{-2} r^2(\nu t) \, ,
   \end{aligned}
\end{equation}
valid for all $\sigma \in [0,2]$. Interpolating in $\sigma$, we similarly control $\| P_{j \geq j_0} q(\cdot,t) \|_{\dot B^{d/2}_{2,1}}$.

For $t \geq \nu^{-1}$ and $j < j_0$, we have, for any $s_2>s_1$,
\begin{equation}
\begin{aligned}
    &2^{js_2} \| \int_0^t S(t-s) \div_p P_j \bm{g} \, ds \|_{L^2_{x,p}} \\
    &\qquad \les \nu^{-1/2} \left( \int_0^t 2^{2j(s_2-s_1)} e^{-2\delta_0 2^{2j} (t-s)/\nu} 2^{2 js_1} \| P_j \bm{g}(\cdot,s) \|_{L^2_{x,p}}^2 \,ds \right)^{1/2} \, .
    \end{aligned}
\end{equation}
Here, we choose $s_1 = -d/2$ and $s_2 = \sigma \geq d/2$. 
Employing $\nu \leq 1$, we have
\begin{equation}
\begin{aligned}
    &2^{j\sigma} \| \int_0^t S(t-s) \div_p P_j \bm{g} \, ds \|_{L^2_{x,p}} \\
    &\qquad \les \nu^{-1/2} N j^{-3/2} \left( \int_0^t j^3 2^{2j(\sigma+d/2)} e^{-2\delta_0   2^{2j} \la t-s \ra}  r^2(\nu s) \, ds \right)^{1/2} \\
    &\qquad \les \nu^{-1/2} N j^{-3/2} \left( \int_0^t \la t-s \ra^{-(\sigma+\frac{d}{2})}  (\log (1+\la t-s \ra))^3 r^2(\nu s) \, ds \right)^{1/2} \\
    &\qquad \les \nu^{-1} N j^{-3/2} r(\nu t) \, .
    \end{aligned}
\end{equation}
Summing in $j$ with $\sigma=d/2$, we obtain the desired estimate in $(\dot B^{d/2}_{2,1})_x L^2_p$. Notice that the logarithmic factor is lost in the $j^{-3/2}$ coefficient, which is needed to sum the right-hand side. There is an analogous (simpler) estimate in $\dot H^\sigma$, $\sigma \in (d/2,2]$.
%which does not require introducing the logarithmic factors. %\dacomment{Needs cleaning}
%The low frequencies are taken care of for us by the $L^1$ norm
%\da{This one suffices to put it into the normal space. Put both of them into $L^2$, for example.}
\end{proof}

\begin{proof}[Proof of Theorem~\ref{thm:vacuumstab}]
%Suppose
%\begin{equation}
    %M := \| f^{\rm in} \|_{L^1_{x,p}} \leq \varepsilon_0
%\end{equation}
%\begin{equation}
    %\varepsilon := \| f^{\rm in} \|_{L^1_x L^2_p \cap H^2_x L^2_p} \leq \varepsilon_0 \, .
%\end{equation}
In keeping with the convention that $f$ is the perturbation around the background state $\psi=\overline\psi=0$, we will write $f$ in place of $\psi$.
We suppose the \emph{bootstrap assumption}:
\begin{equation}
    \label{eq:muhbootstarpping}
    \| f \|_{X_T} + \sup_{t \in (0,T)} \| f(\cdot,t) \|_{(\dot B^{-d/2}_{2,\infty})_x L^2_p} \leq C_0 \varepsilon \, ,
\end{equation}
where $\varepsilon_0 > 0$ and $C_0 \geq 10$ will be determined in the course of the proof. We seek to demonstrate that~\eqref{eq:muhbootstarpping} holds with $C_0/2$ on the right-hand side instead of $C_0.$ Again, we estimate Duhamel's formula~\eqref{eq:duhamelsformula} with $B$, $B_1$, and $B_2$ defined by~\eqref{eq:bdef}-\eqref{eq:b2def}, where now $S(t)=e^{tL^{0,\iota}_{\nu,\kappa}}$.
%\begin{equation}
%    \label{eq:duhamelsformulayay}
    %f(\cdot,t) = S(t) f^{\rm in} + B(f,f)(\cdot,t) \, ,
%\end{equation}
%where
%\begin{equation}
    %B(f,g) = B_1(f,g) + B_2(f,g) \, ,
    %\end{equation}
    %\begin{equation}
        %B_1(f,g)(\cdot,t) = - \int_0^t S(t-s) \div_x (\bu[g] f)(\cdot,s) \, ds \, ,
    %\end{equation}
%\begin{equation}
    %B_2(f,g)(\cdot,t) = - \int_0^t S(t-s) \div_p [(\bI-\bp \otimes \bp) (\nabla_x \bu[g] \bp)f](\cdot,s) \, ds \, .
%\end{equation}

\emph{Step 0. Initial data}. By Lemma~\ref{lem:initialdataest}, we have
\begin{equation}
\label{eq:initdataestyay}
    \| S(\cdot) f^{\rm in} \|_{X_\infty} + \sup_{t \in (0,+\infty)} \| S(t) f^{\rm in} \|_{(\dot B^{-d/2}_{2,\infty})_x L^2_p} \les \| f^{\rm in} \|_Z \les \varepsilon \, .
\end{equation}

\emph{Step 1. $B_1$ term}. Again, by Lemma~\ref{lem:initialdataest}, we have
\begin{equation}
    \label{eq:duhamelforb1}
    \begin{aligned}
        \| B_1(f,f)(\cdot,t) \|_{Y} &\leq  \int_0^t \| S(t-s) \div_x (\bu f)(\cdot,s) \|_{Y} \, ds\\
        &\les \int_0^t  r(\nu (t-s))  \| e^{\kappa(t-s) \Delta_x} \div_x (\bu f)(\cdot,s) \|_{Z} \, ds \, .
        \end{aligned}
    \end{equation}
    We estimate the products $\bu f$ and $\bu \cdot \nabla_x f$. We begin by recording preliminary estimates. Since $\| f(\cdot,t) \|_{L^1_{x,p}} = M$ is conserved, we have
    \begin{equation}
    \| \bu(\cdot,t) \|_{\dot B^{1-d/2}_{2,\infty}} \les M \, .
    \end{equation}
    Additionally, we have
    \begin{equation}
        \| \bu(\cdot,t) \|_{\dot B^{1+d/2}_{2,1} \cap \dot H^3} \les \| f(\cdot,t) \|_{(\dot B^{d/2}_{2,1} \cap \dot H^2)_x L^2_p} \, .
    \end{equation}
    By real interpolation, we have
    \begin{equation}
        \label{eq:buinterpest}
        \| \bu \|_{\dot B^s_{2,1}} \les M^\theta \| f \|_{\dot B^{d/2}_{2,1}}^{1-\theta}\,,
    \end{equation}
    where $s = \theta(1-d/2) + (1-\theta)(1+d/2)$ and $\theta \in (0,1)$. Similarly,
       \begin{equation}
         \label{eq:finterpest}
        \| f \|_{(\dot B^{s'}_{2,1})_x L^2_p} \les \|f \|_{(\dot B^{-d/2}_{2,\infty})_x L^2_p}^\theta \| f \|_{(\dot B^{d/2}_{2,1})_x L^2_p}^{1-\theta}\,,
    \end{equation}
    %\locomment{This is not correct!!!!!}
    where $s' = \theta(-d/2) + (1-\theta)(d/2)$ and $\theta \in (0,1)$.
    
    To estimate $\bu f$, we use~\eqref{eq:buinterpest} with $s=d/2$ and $\theta = 1/3$. Then
    \begin{equation}
        \| \bu f(\cdot,t) \|_{(\dot B^{d/2}_{2,1})_x L^2_p} \les M^{1/3} \| f(\cdot,t) \|_{(\dot B^{d/2}_{2,1})_x L^2_p}^{5/3} \, ,
    \end{equation}
    since $\dot B^{d/2}_{2,1}$ is a multiplicative algebra, and
    \begin{equation}
    \begin{aligned}
        \| \bu f (\cdot,t)\|_{\dot H^2_x L^2_p} &\les \| \bu \nabla^2_x f \|_{L^2_{x,p}} + \| \nabla \bu \otimes \nabla_x f \|_{L^2_{x,p}} + \| \nabla^2 \bu f \|_{L^2_{x,p}} \\
        &\les M^{1/3} \| f(\cdot,t) \|_{Y}^{5/3} + \| f(\cdot,t) \|_{Y}^2 \, .
        \end{aligned}
    \end{equation}
    In summary,
    \begin{equation}
        \| \bu f(\cdot,s) \|_{Y} \les M^{1/3} \| f \|_Y^{5/3} + \| f \|_Y^2 \, ,
    \end{equation}
    and we estimate
    \begin{equation}
        \label{eq:muhZest1}
        \| e^{\kappa (t-s) \Delta_x} \div_x (\bu f) \|_{Y} \les (\kappa(t-s))^{-1/2} (M^{1/3} \| f \|_Y^{5/3} + \| f \|_Y^2 ) \, .
    \end{equation}
    
    To estimate $\bu \cdot \nabla_x f$, we use $\theta=5/6$ in~\eqref{eq:buinterpest} and $\theta=1/6$ in~\eqref{eq:finterpest}
    to obtain 
    \begin{equation}
        \| \bu \|_{L^2_x} \les M^{5/6} \| f \|_{Y}^{1/6} \, ,
    \end{equation}
    \begin{equation}
        \| \nabla_x f \|_{L^2_{x,p}} \les \| f \|_{(\dot B^{-d/2}_{2,\infty})_x L^2_p}^{1/6} \| f \|_{Y}^{5/6} \, .
    \end{equation}
    Upon multiplying the two, we have
    \begin{equation}
        \| \bu \cdot \nabla_x f \|_{L^1_x L^2_p} \les M^{5/6} \| f \|_{(\dot B^{-d/2}_{2,\infty})_x L^2_p}^{1/6} \| f \|_Y \, 
    \end{equation}
    and the estimate
   \begin{equation}
    \label{eq:muhZest2}
       \| e^{\kappa (t-s) \Delta_x} \bu \cdot \nabla_x f \|_{(\dot B^{-d/2}_{2,\infty})_xL^2_p} \les M^{5/6} \| f \|_{(\dot B^{-d/2}_{2,\infty})_x L^2_p}^{1/6} \| f \|_Y \, .
   \end{equation}
    
    We combine~\eqref{eq:muhZest1} and~\eqref{eq:muhZest2} to estimate \eqref{eq:duhamelforb1}:
    \begin{equation}
        \label{eq:B1estyay}
        \begin{aligned}
        \| B_1 \|_{X_T}  &\les (\kappa^{-1/2} \nu^{-1/2} + \nu^{-1}) (M + \| f \|_{(\dot B^{-d/2}_{2,\infty})_x L^2_p} + \| f \|_{X_T}) \| f \|_{X_T} \\
        &\les (\kappa^{-1/2} \nu^{-1/2} + \nu^{-1}) C_0^2 \varepsilon^2 \, .%+ \kappa^{-1/2} \nu^{-1/2} C_0^2 \varepsilon^2 \, . 
        \end{aligned}
    \end{equation}
    Here, we use crucially that $d=3$ to ensure that the kernel in~\eqref{eq:duhamelforb1} decays faster than $t^{-1}$. The case $d=2$ seems to be critical for treating $\bu \cdot \nabla_x f$ perturbatively. Additionally, for all $t \in (0,T)$, we have
    \begin{equation}
    \begin{aligned}
        \| B_1(\cdot,t) \|_{(\dot B^{-d/2}_{2,\infty})_x L^2_p} &\leq \|  \int_0^t S(t-s) (\bu \cdot \nabla_x f)(\cdot,s) \, ds \|_{(\dot B^{-d/2}_{2,\infty})_x L^2_p} \\
        &\les \int_0^t r(\nu s) \, ds \, M^{5/6} \| f \|_{(\dot B^{-d/2}_{2,\infty})_x L^2_p}^{1/6} \| f \|_{X_T} \les \nu^{-1} C_0^2 \varepsilon^2\,.
        \end{aligned}
    \end{equation}

    \emph{Step 2. $B_2$ term}. For this, we must estimate the product $\nabla \bu f$. Since $f \mapsto \nabla \bu$ is a zeroth-order operator and $Y$ is an algebra, we have
    \begin{equation}
        \| \nabla \bu f \|_Y \les \| f \|_{Y}^2 \, .
    \end{equation}
    For the $\dot B^{-d/2}_{2,\infty}$ part of $Z$, we use that $\| \nabla \bu \|_{\dot B^{-d/2}_{2,\infty}} \les M$ and continuity\footnote{This follows from the characterization $\dot B^{-d/2}_{2,\infty} = (\dot B^{d/2}_{2,1})^*$ and the algebra property of $\dot B^{d/2}_{2,1}$.} of the product $(f,g) \mapsto fg : \dot B^{-d/2}_{2,\infty} \times \dot B^{d/2}_{2,1} \to \dot B^{-d/2}_{2,\infty}$.
    %first use the interpolation estimate\dacomment{not necessary -- just use $\nabla u$ in that negative order space and some duality}
    %\begin{equation}
        %\| \nabla \bu \|_{L^2_x} + \| f \|_{L^2_{x,p}} \les \| f \|_{\dot B^{-d/2}_{2,\infty}}^{1/2} \| f \|_{\dot B^{d/2}_{2,1}}^{1/2} \, .
    %\end{equation}
    %Therefore, by Cauchy-Schwarz, 
    %\begin{equation}
        %\label{eq:iestimatednablauf}
        %\| \nabla \bu f \|_{L^1_x L^2_p} \les M \| f \|_Y \, .
    %\end{equation}
    Using these bounds %~\eqref{eq:iestimatednablauf}
    in Lemma~\ref{lem:smoothinginpwholespacest}, we obtain
    \begin{equation}
    \label{eq:B2estyay}
        \| B_2 \|_{X_T} + \sup_{t \in (0,T)} \| B_2(\cdot,t) \|_{(\dot B^{-d/2}_{2,\infty})_x L^2_p} \les \nu^{-1} C_0^2 \varepsilon^2 \, .%+ \nu^{-1}  C_0^2 \varepsilon^2 \, .
    \end{equation}
    
    \emph{3. Conclusion}. Combining the above estimates~\eqref{eq:initdataestyay},~\eqref{eq:B1estyay}, and~\eqref{eq:B2estyay} into Duhamel's formula~\eqref{eq:duhamelsformula}, we have
    \begin{equation}
        \| f \|_{X_T} + \sup_{t \in (0,T)} \| f(\cdot,t) \|_{(\dot B^{-d/2}_{2,\infty})_x L^2_p} \leq C \varepsilon + C (\kappa^{-1/2} \nu^{-1/2} + \nu^{-1})  C_0^2 \varepsilon^2 \, .
    \end{equation}
    To ensure that the right-hand side is bounded above by $C_0 \varepsilon/2$, we choose $C_0 \gg 1$ and $\varepsilon_0 \ll \min(\nu,\kappa)$. This completes the proof.
\end{proof}

%\begin{remark}
%In dimension three, it is possible to optimize the above calculation.
%\end{remark}

%\begin{remark}[Two dimensions]
%Two dimensions is critical.
%\end{remark}

%\begin{remark}
%It is sufficient to consider solutions in the Besov space $\dot B^{d/2}_{2,1}$. The additional $H^2$ regularity is a convenience to simplify the notion of strong solution.
%\end{remark}

%%%%%%%%%%%%%%%%%%%%%%%%%%%%%%%%%%%%%%%

\section{Enhanced dissipation}
\subsection{Linear enhancement}

In this section, we consider the linearized system of equations  
\begin{align}
\p_t f +\bp\cdot\nabla f -\nu \Delta_p f - d \overline\psi\nabla\bu:\bp\otimes \bp &=0 \label{eq:linearizedf} \\
- \Delta \bu + \nabla q = \iota \int_{S^{d-1}} \bp\otimes\bp \, \nabla_x f \, d\bp, \quad \div\bu &=0  \label{eq:linearizedu}
\end{align}
and show that solutions decay at an enhanced rate as long as $\overline\psi$ is sufficiently small. 

As in previous sections, it will be convenient to work mode-by-mode in $\bx$. Our results will be stated for each mode $\bk$, but we will often suppress the $\bk$-dependence in our notation.

We begin by considering the case $\overline \psi=0$. 
In this setting, we obtain the following enhancement result.  

\begin{theorem}[Linear enhancement for $\overline\psi=0$]\label{thm:Phi}
Suppose $f(\bx,\bp,t) = f_k(\bp,t) e^{i\bk \cdot \bx}$, $\bk \in \R^d \setminus \{0\}$, and let $k=\abs{\bk}$.
For $0<\nu< k$, there exist constants $\overline a_1$, $\overline a_2$, $\overline a_3>0$ such that if $\nabla_pf^{\rm in}$ and $({\bf I}-\bp\otimes\bp)\nabla_xf^{\rm in}$ are both in $L^2$, then the functional
\begin{equation}\label{eq:Phi_def0}
\begin{aligned}
 \Phi_k(t) &= \frac{1}{2}\norm{f}_{L^2}^2 + \frac{\overline a_1}{2}\nu^{1/2}k^{-1/2}\norm{\nabla_pf}_{L^2}^2 + \overline a_2 k^{-1} \Re \langle ({\bf I}-\bp\otimes\bp)\nabla_xf,\nabla_pf\rangle \\
 &\hspace{6cm}+ \frac{\overline a_3}{2}\nu^{-1/2}k^{-3/2}\norm{({\bf I}-\bp\otimes\bp)\nabla_xf}_{L^2}^2
 \end{aligned}
 \end{equation} 
 satisfies the fast decay estimate 
\begin{equation}\label{eq:Phi_enhance}
\Phi_k(t) \le e^{-\alpha_0 \nu^{1/2}k^{1/2}t}\Phi_k(0), \qquad t\ge 0,
\end{equation}
where $\alpha_0=\overline a_3$ is explicit. \\

For $f^{\rm in}$ in $L^2$ only, the decay rate of \eqref{eq:Phi_enhance} is modified by a logarithmic factor:
\begin{equation}\label{eq:Q_enhance}
\norm{f_k(\cdot,t)}_{L^2} \le \beta_0^{1/2} \norm{f^{\rm in}_k}_{L^2} e^{-\frac{\alpha_0}{2} \lambda_k t}, \qquad \lambda_{\nu,k} = \frac{\nu^{1/2}k^{1/2}}{1+\abs{\log \nu}+\log k}
\end{equation}
where $\beta_0=e( 1 + \frac{3}{4}\alpha_0 \overline a_1 + \frac{3}{2}\overline a_3 )$.
\end{theorem}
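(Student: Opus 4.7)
The strategy is a hypocoercivity estimate, in the spirit of~\cite{beckwaynebar,jacobmicheleshear,gallay2021enhanced} for shear flows. After rotating so that $\bk = k\bm{e}_1$, the linearized equation with $\overline\psi=0$ reads $\p_t f + i k p_1 f = \nu \Delta_p f$, and I would compute $\frac{d}{dt}\Phi_k$ term-by-term along this flow. The algebraic crux is the commutator on the sphere
\begin{equation*}
[\nabla_p,\, \bp\cdot\nabla_x]\,f \;=\; (\bI - \bp\otimes\bp)\,\nabla_x f,
\end{equation*}
which follows from $\nabla_p \bp = \bI - \bp\otimes\bp$. The bare $\tfrac12\|f\|_{L^2}^2$ piece dissipates at rate $\nu\|\nabla_p f\|_{L^2}^2$; differentiating $\|\nabla_p f\|_{L^2}^2$ gives a $-\nu\|\nabla_p^2 f\|_{L^2}^2$ dissipation (modulo curvature from $[\Delta_p,\nabla_p]$) together with a lower-order transport coupling produced by the commutator; and, most importantly, differentiating the indefinite cross term yields $-\|(\bI-\bp\otimes\bp)\nabla_x f\|_{L^2}^2$ as its leading contribution, which is precisely the hypocoercive $\bx$-dissipation that the basic energy identity lacks.

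The prefactors $\overline{a}_1\nu^{1/2}k^{-1/2}$, $\overline{a}_2 k^{-1}$, and $\overline{a}_3 \nu^{-1/2}k^{-3/2}$ are not free but are dictated by the natural shear-layer scale $(\nu/k)^{1/4}$: they put $\|f\|_{L^2}^2$, $\nu^{1/2}k^{-1/2}\|\nabla_p f\|_{L^2}^2$, and $\nu^{-1/2}k^{-3/2}\|(\bI-\bp\otimes\bp)\nabla_x f\|_{L^2}^2$ on equal footing, and they ensure that Cauchy--Schwarz keeps $\Phi_k$ coercive and equivalent to the sum of these three positive terms. Choosing $\overline{a}_2$ small enough relative to $\overline{a}_1,\overline{a}_3$, all error terms arising from the sphere curvature, from $[\Delta_p,\nabla_p]$, and from the transport couplings can be absorbed into the three good dissipations, producing a Gr\"onwall inequality $\frac{d}{dt}\Phi_k \le -\alpha_0\nu^{1/2}k^{1/2}\Phi_k$ with $\alpha_0 = \overline{a}_3$, which integrates to~\eqref{eq:Phi_enhance}.

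For~\eqref{eq:Q_enhance} the initial data is only in $L^2$, so $\Phi_k(0)$ may be infinite, and the plan is a two-stage argument. On a short interval $[0,t_\ast]$, the basic identity $\int_0^{t_\ast}\nu\|\nabla_p f\|_{L^2}^2\,ds \le \tfrac12\|f^{\rm in}\|_{L^2}^2$ together with time-weighted variants of the hypocoercive functional (multiplying the $\bp$- and $\bx$-derivative pieces by appropriate powers of $t$) yields a smoothing bound of the form $\Phi_k(t_\ast) \les t_\ast^{-N}\|f^{\rm in}\|_{L^2}^2$ for a fixed small $N$. Then~\eqref{eq:Phi_enhance} on $[t_\ast, t]$ gives exponential decay at the clean rate $\nu^{1/2}k^{1/2}$. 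Optimizing $t_\ast \asymp \lambda_{\nu,k}^{-1}$ so that the polynomial short-time loss $t_\ast^{-N}$ balances the exponential long-time gain produces exactly the logarithmic correction $1 + |\log \nu|+\log k$ in the rate $\lambda_{\nu,k}$ and fixes the constant $\beta_0$, with the factor $e$ coming from $e^{\alpha_0 \nu^{1/2}k^{1/2} t_\ast}$ at the optimal choice.

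The main obstacle will be the tight coefficient bookkeeping in the first part: the constraints on $(\overline{a}_1,\overline{a}_2,\overline{a}_3)$ from the various error terms are rigid, and any mis-scaling in powers of $\nu$ or $k$ destroys the rate $\nu^{1/2}k^{1/2}$. In particular, the curvature terms arising from $\nabla_p \bp = \bI-\bp\otimes\bp$ and from $[\Delta_p,\nabla_p]$ on $S^{d-1}$ must be absorbed using Poincar\'e-type inequalities on the sphere restricted to the orthogonal complement of constants. Once Part~1 is in place, Part~2 is routine.
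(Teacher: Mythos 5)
Your Part 1 follows the same hypocoercivity route as the paper (same functional, same key commutator $[\nabla_p,\bp\cdot\nabla_x]=(\bI-\bp\otimes\bp)\nabla_x$, same $(\nu/k)^{1/4}$ shear-layer scaling of the coefficients), but there is one load-bearing step your sketch elides. Differentiating $\Phi_k$ produces negative definite terms in $\|\nabla_pf\|^2$, $\|\Delta_pf\|^2$, $\|(\bI-\bp\otimes\bp)\nabla_xf\|^2$, and $\|(\bI-\bp\otimes\bp)\nabla_x\nabla_pf\|^2$, but \emph{nothing} negative in $\|f\|^2$, which is the leading piece of $\Phi_k$. You cannot therefore close a Gr\"onwall inequality $\frac{d}{dt}\Phi_k\lesssim -\nu^{1/2}k^{1/2}\Phi_k$ directly; you must first supply a dissipation term for $\|f\|^2$. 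The paper does this with a kinetic Poincar\'e-type inequality (Lemma~\ref{lem:p_poincare}): $k^2\|f_k\|^2\lesssim k^{3/2}\nu^{1/2}\|\nabla_pf_k\|^2+k^{1/2}\nu^{-1/2}\|(\bI-\bp\otimes\bp)\nabla_xf_k\|^2$, which is \emph{not} the standard Poincar\'e on the sphere restricted to mean-zero functions (as you propose), but a cutoff argument near the critical set $p_1^2=1$ with the cutoff width tuned to the shear layer scale $\delta\sim(\nu/k)^{1/2}$. Without some such inequality the argument does not close. Separately, $\frac12\p_t\|\nabla_pf\|^2$ produces $-\nu\|\Delta_pf\|^2$, not $-\nu\|\nabla_p^2f\|^2$; these differ by curvature terms and the distinction matters for the bookkeeping.

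Part 2 departs from the paper. The paper does not use time-weighted hypoelliptic smoothing: it uses the basic identity $\|f(t)\|^2=\|f^{\rm in}\|^2-2\nu\int_0^t\|\nabla_pf\|^2$, the mean value theorem to extract a $t^*\lesssim(\nu^{1/2}k^{1/2})^{-1}$ at which $\nu\|\nabla_pf(t^*)\|^2\lesssim\nu^{1/2}k^{1/2}\|f^{\rm in}\|^2$, and the crude bound $\|(\bI-\bp\otimes\bp)\nabla_xf(t^*)\|\le k\|f(t^*)\|$. This gives $\Phi(t^*)\lesssim\nu^{-1/2}k^{1/2}\|f^{\rm in}\|^2$, and the logarithm in $\lambda_{\nu,k}$ enters when this algebraic prefactor is folded into the exponent via $\nu^{-1/2}k^{1/2}e^{-\alpha_0\nu^{1/2}k^{1/2}t}\le e^{-\lambda_{\nu,k}\alpha_0 t}$ for $t\ge T_{\nu,k}$. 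Your time-weighted smoothing idea is a reasonable alternative (and, if done sharply, could in principle \emph{remove} the log, which the paper acknowledges is a technical artifact of the elementary scheme), but your optimization step is wrong as stated: if $t_*\asymp\lambda_{\nu,k}^{-1}$ then $\nu^{1/2}k^{1/2}t_*\asymp 1+|\log\nu|+\log k$, so $e^{\alpha_0\nu^{1/2}k^{1/2}t_*}\approx e^{\alpha_0}\nu^{-\alpha_0}k^{\alpha_0}$ is algebraically large, not $\approx e$. The correct choice is $t_*\asymp(\nu^{1/2}k^{1/2})^{-1}$, and the log correction comes from the later conversion of the algebraic loss into the exponent, not from the $e^{\alpha_0\nu^{1/2}k^{1/2}t_*}$ factor.
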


In particular, defining the linear operator 
\begin{equation}
    \mc{L}_\nu f :=\bp\cdot\nabla f -\nu \Delta_p f \,,
\end{equation}
we make note of the operator estimate
\begin{equation}\label{eq:semigrp_est}
\norm{e^{\mc{L}_\nu t}}_{L^2\to L^2} \le \beta_0^{1/2} e^{-\frac{\alpha_0}{2} \lambda_{\nu,k} t}.
\end{equation}

%%%
As a consequence of Theorem \ref{thm:Phi}, we obtain the following enhancement result for small swimmer concentrations $\overline\psi$, regardless of the sign of the active stress in \eqref{eq:linearizedu}.

\begin{corollary}[Linear enhancement for small $\overline \psi$]\label{cor:psibar}
For $\overline\psi$ satisfying 
\begin{equation}\label{eq:psibar_bd}
\overline\psi \le \lambda_\nu \frac{\alpha_0}{8d\beta_0^{1/2}},
\end{equation}
 the solution $f$ to the full linearized system \eqref{eq:linearizedf}--\eqref{eq:linearizedu} satisfies 
\begin{equation}\label{eq:psibar_enhance}
\norm{f(\cdot,t)}_{L^2} \le 2e^{-\frac{\alpha_0}{4}\lambda_{\nu}t}\norm{f^{\rm in}}_{L^2}\,,
\end{equation}
provided that $\int_xf^{\rm in}\,d\bx=0$.
Here, $\lambda_\nu=\lambda_{\nu,1}=\frac{\nu^{1/2}}{1+\abs{\log\nu}}$.
\end{corollary}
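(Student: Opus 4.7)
The idea is to treat the stress-coupling term $d\overline\psi\,\nabla\bu:\bp\otimes\bp$ perturbatively against the decay of the free semigroup $e^{\mc{L}_\nu t}$ from Theorem~\ref{thm:Phi}, closing the estimate mode-by-mode via Duhamel and Gronwall.

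First I would reduce to Fourier analysis in $\bx$. Since $\int_\bx f^{\rm in}\, d\bx = 0$ and every term in \eqref{eq:linearizedf}--\eqref{eq:linearizedu} preserves the $\bx$-mean (the transport term integrates to $0$ by periodicity; the nonlocal term does as well since $\bu$ is mean-free; $\nu\Delta_p$ acts only in $\bp$), only nonzero Fourier modes $\bk \in \Z^d\setminus\{0\}$ contribute, so every relevant $k=|\bk|$ satisfies $k\ge 1$. Each Fourier coefficient $\hat{f}_\bk(\bp,t)$ solves
\begin{equation*}
\p_t \hat{f}_\bk + \mc{L}_\nu \hat{f}_\bk = d\overline\psi\,\widehat{\nabla\bu}_\bk : \bp\otimes\bp,
\end{equation*}
and the closed-form expression \eqref{eq:gradu_hat} combined with Cauchy--Schwarz in $\bp$ furnishes the uniform bound $|\widehat{\nabla\bu}_\bk| \le |\hat{\bm{\Sigma}}_\bk| \le C_d\|\hat{f}_\bk\|_{L^2_p}$.

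Applying Duhamel and invoking the semigroup estimate \eqref{eq:semigrp_est} then gives the Volterra-type inequality
\begin{equation*}
\|\hat{f}_\bk(t)\|_{L^2_p} \le \beta_0^{1/2} e^{-\frac{\alpha_0}{2}\lambda_{\nu,k} t}\|\hat{f}_\bk^{\rm in}\|_{L^2_p} + dC_d\beta_0^{1/2}\overline\psi \int_0^t e^{-\frac{\alpha_0}{2}\lambda_{\nu,k}(t-s)}\|\hat{f}_\bk(s)\|_{L^2_p}\,ds,
\end{equation*}
and Gronwall yields $\|\hat{f}_\bk(t)\|_{L^2_p} \le \beta_0^{1/2}\exp\bigl(-(\tfrac{\alpha_0}{2}\lambda_{\nu,k} - dC_d\beta_0^{1/2}\overline\psi)t\bigr)\|\hat{f}_\bk^{\rm in}\|_{L^2_p}$. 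The smallness hypothesis \eqref{eq:psibar_bd} is calibrated exactly so that $dC_d\beta_0^{1/2}\overline\psi \le \tfrac{\alpha_0}{4}\lambda_\nu \le \tfrac{\alpha_0}{4}\lambda_{\nu,k}$ for every $k\ge 1$, leaving a net exponential rate of at least $\tfrac{\alpha_0}{4}\lambda_\nu$ in each mode. Summing over $\bk$ via Plancherel delivers \eqref{eq:psibar_enhance}.

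The main technical point will be verifying $\inf_{k\ge 1}\lambda_{\nu,k} \gtrsim \lambda_\nu$ from the formula \eqref{eq:dampingcoeffoftheting}: the function $k\mapsto k^{1/2}/(1+|\log(\nu/k)|)$ is not globally monotone and can dip slightly below its value at $k=1$ when $\nu$ is close to $1$, so one has to check that any such dip is controlled by an absolute constant which can be absorbed into $C_d$. A secondary bookkeeping issue is matching the explicit constant $2$ in \eqref{eq:psibar_enhance}: the smallness threshold in \eqref{eq:psibar_bd} must be tight enough that the amplitude factor $\beta_0^{1/2}$ can be either absorbed into $2$ at $t=0$ or traded against a marginal reduction of the exponential rate. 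Both issues are routine but need attention in the final write-up.
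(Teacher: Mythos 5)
Your proposal is essentially the paper's own argument: Duhamel around the free semigroup $e^{\mc{L}_\nu t}$, the operator bound \eqref{eq:semigrp_est}, and absorption of the coupling term via the smallness of $\overline\psi$; the paper absorbs by taking $\sup_{t\in(0,T)}$ of the time-weighted $L^2$ norm rather than applying Gronwall mode-by-mode, but since $\widehat{\nabla\bu}_\bk$ depends only on $\hat f_\bk$ the two viewpoints are interchangeable. Two small refinements worth noting: the constant $C_d$ in your bound $|\widehat{\nabla\bu}_\bk|\le C_d\|\hat f_\bk\|_{L^2_p}$ can be taken equal to $1$ (this is exactly \eqref{eq:gradu_bd}), which is what produces the sharp threshold \eqref{eq:psibar_bd}; and the dip in $k\mapsto\lambda_{\nu,k}$ that you flag is real for $e^{-1}<\nu<1$ (the minimizer over $k\ge1$ is $k^*=e^{1-|\log\nu|}$, giving $\inf_{k\ge1}\lambda_{\nu,k}\ge\frac{e^{1/2}}{2}\lambda_\nu$) --- the paper glosses over this by applying \eqref{eq:semigrp_est} with $\lambda_\nu$ uniformly in $\bk$, so a clean write-up should either restrict to $\nu\le e^{-1}$, replace $\lambda_\nu$ by $\inf_{k\ge1}\lambda_{\nu,k}$, or note the universal factor can be absorbed into the numerical constant in \eqref{eq:psibar_bd}, exactly as you anticipate.
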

% Note that the size of $\overline\psi$ which may be considered as a perturbation of $\overline\psi=0$ is larger than might be expected, as we may take $\overline\psi=O(\lambda_\nu)=O(\nu^{1/2+})$ rather than $O(\nu)$. 

%%%%%%%%%%%%%%%%%%%%%%%%%%%%%%%%%%%%%%%%
\subsubsection{Enhancement for \texorpdfstring{$\overline\psi=0$}{}}
For Theorem \ref{thm:Phi}, we consider the equation 
\begin{equation}\label{eq:basic_lin_eqn}
\p_t f +\bp\cdot\nabla_x f - \nu\Delta_p f =0
\end{equation}
and proceed via a similar hypocoercivity argument to~\cite{beckwaynebar, michelepoiseuille}. Recall that we work mode-by-mode in $\bx$ but sometimes suppress the $k$-dependence in our notation. % of functions.
We consider the functional $\Phi(t)$ defined by \eqref{eq:Phi_def0} with coefficients $a_1$, $a_2$, and $a_3$ yet to be determined:
\begin{equation}\label{eq:Phi_def}
\Phi(t) = \frac{1}{2}\norm{f}_{L^2}^2 + \frac{a_1}{2}\norm{\nabla_pf}_{L^2}^2 + a_2 \Re \langle ({\bf I}-\bp\otimes\bp)\nabla_xf,\nabla_pf\rangle + \frac{a_3}{2}\norm{({\bf I}-\bp\otimes\bp)\nabla_xf}_{L^2}^2\,.
\end{equation}

Below, we will exploit the following extrinsic formulae for the gradient $\nabla_p$ and divergence $\div_p$ on the sphere. For a smooth function $h(\bp)$ on $S^{d-1}$, $\nabla_p h$ can be computed by extending $h$ arbitrarily to a smooth function $\wt h$ in a neighborhood of the sphere, computing the flat gradient, and projecting back to the tangent space:
\begin{equation}
  \label{eq:flatgradient}
\nabla_p h=({\bf I}-\bp\otimes\bp)\nabla\wt h \, .
\end{equation}
Similarly, given a smooth vector field $V(\bp)$ on $S^{d-1}$, one may arbitrarily extend $V$ to a smooth vector field $\tilde{V}$ in a neighborhood of the sphere, compute the flat gradient, and take its tangential trace:
\begin{equation}
  \label{eq:divcalc}
\div_p V = \nabla \tilde{V} : ({\bf I} - \bp \otimes \bp) \, .
\end{equation}
The extensions can be done homogeneously, for example.

To justify~\eqref{eq:divcalc}, it is sufficient to compute $(\div_p V)(\be_1)$, say, in spherical coordinates $(\phi,\theta)$, where $\bp(\phi,\theta) = (\sin \phi \cos \theta, \sin \phi \sin \theta, \cos \phi)$, $g = \sin^2 \phi \, d\phi^2 + d\theta^2$, and $\div V = (\sqrt{|g|})^{-1} \p_i (\sqrt{|g|} V^i)$ ($|g| = \det g$). The two-dimensional case is simple. As a consequence, we have
\begin{equation}
\nabla_p p_k = \be_k - \bp p_k \, , \quad \div_p \bp = d-1 \, .
\end{equation}

\begin{proof}[Proof of Theorem \ref{thm:Phi}]
Our goal is to choose $a_1$, $a_2$, and $a_3$ such that the bound \eqref{eq:Phi_enhance} holds. 

We first notice that 
\begin{equation}\label{eq:middle_term}
\abs{a_2\langle ({\bf I}-\bp\otimes\bp)\nabla_xf,\nabla_pf\rangle} 
\le \frac{a_2}{4\delta}\norm{({\bf I}-\bp\otimes\bp)\nabla_xf}_{L^2}^2 + \delta a_2\norm{\nabla_pf}_{L^2}^2
\end{equation}
for $\delta>0$; in particular, as long as 
\begin{equation}\label{eq:coeff_cond1}
\frac{a_2}{\delta} \le a_1, \quad 
\delta a_2 \le \frac{a_3}{4},
\end{equation}
for some choice of $\delta$, we have that $\Phi$ satisfies
\begin{align}
\Phi(t) &\ge \frac{1}{2}\norm{f}_{L^2}^2 + \frac{a_1}{4}\norm{\nabla_pf}_{L^2}^2 + \frac{a_3}{4}\norm{({\bf I}-\bp\otimes\bp)\nabla_xf}_{L^2}^2  \label{eq:Phi_lower}\\
\Phi(t) &\le \frac{1}{2}\norm{f}_{L^2}^2 + \frac{3a_1}{4}\norm{\nabla_pf}_{L^2}^2+  \frac{3a_3}{4}\norm{({\bf I}-\bp\otimes\bp)\nabla_xf}_{L^2}^2\,.  \label{eq:Phi_upper}
\end{align} 
We aim to use \eqref{eq:Phi_lower} and \eqref{eq:Phi_upper} to bound $\p_t\Phi(t)$ in terms of $\Phi(t)$.

The time derivatives of each of the four terms of $\Phi$ can be shown to satisfy the following equations: 
\begin{align}
\frac{1}{2}\p_t\norm{f}_{L^2}^2 &= -\nu\norm{\nabla_p f}_{L^2}^2 \label{eq:pt_est1}\\
\frac{1}{2}\p_t\norm{\nabla_p f}_{L^2}^2 &= - \nu\norm{\Delta_pf}_{L^2}^2 - \Re \langle ({\bf I}-\bp\otimes\bp)\nabla_x f,\nabla_p f\rangle \label{eq:pt_est2} \\
\p_t\langle ({\bf I}-\bp\otimes\bp)\nabla_x f,\nabla_p f\rangle 
&= -\norm{({\bf I}-\bp\otimes\bp)\nabla_xf}_{L^2}^2  \label{eq:pt_est3} \\
&\qquad -2\nu \Re \langle ({\bf I}-\bp\otimes\bp)\nabla_x\cdot(\nabla_p f),\Delta_p f\rangle \nonumber \\
&\qquad + (d-1)\nu \langle \bp\cdot\nabla_x f,\Delta_p f\rangle \nonumber \\
\frac{1}{2}\p_t \norm{({\bf I}-\bp\otimes\bp)\nabla_x f}_{L^2}^2 &= -\nu\norm{({\bf I}-\bp\otimes\bp)\nabla_x\nabla_p f}_{L^2}^2  \label{eq:pt_est4} \\
&\qquad +\nu \Re \langle \bp\cdot\nabla_xf,({\bf I}-\bp\otimes\bp)\nabla_x\cdot(\nabla_p f) \rangle \nonumber\\
&\qquad
+ \nu \Re \langle ({\bf I}-\bp\otimes\bp)\nabla_x f,\bp\cdot\nabla_x(\nabla_p f) \rangle\,. \nonumber
\end{align}
The first equation \eqref{eq:pt_est1} follows immediately from \eqref{eq:basic_lin_eqn}, while the second equation \eqref{eq:pt_est2} relies on the commutator 
\begin{equation}
\begin{aligned}
\relax
[\nabla_p,\bp\cdot\nabla_x] &= ({\bf I}-\bp\otimes\bp)\nabla_x\,.
\end{aligned}
\end{equation}
% and third \eqref{eq:pt_est3} rely, respectively, on the commutators
% \begin{equation}
% \begin{aligned}
% \relax
% [\nabla_p,\bp\cdot\nabla_x] &= ({\bf I}-\bp\otimes\bp)\nabla_x\\
% [(\nabla_p)_i,\big(({\bf I}-\bp\otimes\bp)\nabla_x\big)_i] &= -(d-1)\bp\cdot\nabla_x \,.
% \end{aligned}
% \end{equation}
% Here, for any function $h(\bp)$, the notation $(\nabla_p)_i h$ is used to denote the $i$-th component of the vector field $\nabla_p h$.

\begin{proof}[Verification of~\eqref{eq:pt_est3},~\eqref{eq:pt_est4}]
We verify~\eqref{eq:pt_est3} and \eqref{eq:pt_est4} mode-by-mode in $\bx$. Without loss of generality, $f = f_k e^{i\bk \cdot \bx}$ and $\bk=k\be_1$.

We show~\eqref{eq:pt_est3} by calculating
\begin{equation}
\begin{aligned}
&\p_t\langle (\be_1-\bp p_1)ik f_k,\nabla_pf_k\rangle \\
&\qquad = -\langle (\be_1-\bp p_1)ik(ikp_1 f_k),\nabla_p f_k\rangle 
+\nu\langle (\be_1-\bp p_1)ik(\Delta_p f_k),\nabla_p f_k\rangle \\
&\qquad \quad
- \langle (\be_1-\bp p_1)ik f_k,\nabla_p(ik p_1 f_k)\rangle
+\nu\langle (\be_1-\bp p_1)ik f_k,\nabla_p(\Delta_p f_k)\rangle \\
&\qquad \overset{\ast}{=} - k^2 \norm{(\be_1-\bp p_1) f_k}_{L^2}^2 
+\nu\langle (\be_1-\bp p_1)ik(\Delta_p f_k),\nabla_p f_k\rangle \\
&\qquad\quad
+\nu\langle (\be_1-\bp p_1)ik f_k,\nabla_p(\Delta_p f_k)\rangle \\
&\quad\;\, \overset{\eqref{eq:divofthethingcalc}}{=} - k^2 \norm{(\be_1-\bp p_1) f_k}_{L^2}^2 
+\nu(d-1)\langle p_1ik f_k,\Delta_p f_k\rangle \\
&\qquad\quad
+\nu\langle (\be_1-\bp p_1)ik(\Delta_p f_k),\nabla_p f_k\rangle-\nu\langle (\be_1-\bp p_1)ik \cdot\nabla_p f_k,\Delta_p f_k\rangle \\
%
%
% &= - \norm{(\be_1-\bp p_1)ik f_k}_{L^2}^2 
% +\nu(d-1)\langle p_1ik f_k,\Delta_p f_k\rangle \\
% &\quad
% -\nu\langle \Delta_p f_k,(\be_1-\bp p_1)ik\cdot\nabla_p f_k\rangle-\nu\langle (\be_1-\bp p_1)ik \cdot\nabla_p f_k,\Delta_p f_k\rangle \\
%
%{
%
&\qquad= - k^2 \norm{(\be_1-\bp p_1) f_k}_{L^2}^2 
+\nu(d-1)\langle p_1ik f_k,\Delta_p f_k\rangle \\
&\qquad\quad
-2\nu \Re\langle (\be_1-\bp p_1)ik \cdot\nabla_p f_k,\Delta_p f_k\rangle \,,
\end{aligned}
\end{equation}
where in the $\ast$ step we simplify the two terms without $\nu$, and
\begin{equation}
  \label{eq:divofthethingcalc}
\div_p(\be_1-\bp p_1) = -(d-1)p_1\,.
\end{equation}

Furthermore, we may calculate \eqref{eq:pt_est4} by
\begin{equation}
\begin{aligned}
&\frac{1}{2}\p_t \norm{(\be_1-\bp p_1)ik f_k}_{L^2}^2 \\
&\qquad = \underbrace{- \Re \langle (\be_1-\bp p_1)ik f_k, ik p_1(\be_1-\bp p_1)ik f_k \rangle}_{ = 0} \\
&\qquad\quad+\nu \Re \langle (\be_1-\bp p_1)ik f_k,(\be_1-\bp p_1)ik(\Delta_p f_k) \rangle \\
&\qquad = \nu k^2 \Re \langle (1-p_1^2) f_k,\Delta_p f_k \rangle \\
&\qquad= -\nu k^2 \Re \langle \nabla_p((1-p_1^2) f_k),\nabla_p f_k \rangle \\
&\qquad = -\nu k^2 \Re \langle (1-p_1^2)\nabla_p f_k,\nabla_p f_k \rangle + 2\nu k^2 \Re \langle (\be_1-\bp p_1) p_1 f_k,\nabla_p f_k \rangle\\
&\qquad = -\nu k^2 \norm{(\be_1-\bp p_1)\nabla_p f_k}_{L^2}^2 +\nu k^2 \Re \langle p_1 f_k,(\be_1-\bp p_1)\cdot \nabla_p f_k \rangle \\
&\qquad\quad
+ \nu k^2 \Re \langle (\be_1-\bp p_1)f_k,p_1\nabla_p f_k \rangle\, ,
\end{aligned}
\end{equation}
where we used that $| \be_1 - \bp p_1|^2 = 1-p_1^2$.
\end{proof}

% \textcolor{blue}{The fourth equation \eqref{eq:pt_est4} can be checked using the commutator
% \begin{equation}
% [(\nabla_p)_i,\big(({\bf I}-\bp\otimes\bp)\nabla_x\big)_j] = (-\delta_{ij}+2\bp_i\bp_j)\bp\cdot\nabla_x-\bp_j\nabla_{x_i}\,,
% \end{equation}
% but it is perhaps simpler to verify it mode-by-mode in $\bx$. Without loss of generality, $f = f_k e^{i\bk \cdot \bx}$ and $\bk=k\be_1$. The only difficult term is the $\Delta_p$ term:
% \begin{equation}
% \begin{aligned}
% &\la ({\bf I}-\bp\otimes\bp)\nabla_xf, ({\bf I}-\bp\otimes\bp)\nabla_x \Delta_p f\ra = -k^2\la (1-p_1)f_k,(1-p_1)\Delta_p f_k\ra\\
% &\quad= k^2\int_{S^{d-1}}(1-p_1)^2\abs{\nabla_p f_k}^2 \,d\bp  \underbrace{- 2k^2\int_{S^{d-1}} (\be_1-\bp p_1)\cdot\nabla_p \overline{f_k}(1-p_1)f_k\,d\bp}_{ =  2k^2 \int_{S^{d-1}} (\be_1-\bp p_1)\cdot\nabla_p \overline{f_k} p_1 f_k\,d\bp } \, , 
% \end{aligned}
% \end{equation} 
% which we recognize as the right-hand side of \eqref{eq:pt_est4}.}

Combining equations \eqref{eq:pt_est1}--\eqref{eq:pt_est4}, $\p_t\Phi$ then satisfies 
\begin{equation}\label{eq:pt_Phi0}
\begin{aligned}
\p_t\Phi(t) 
&\le -\nu\norm{\nabla_p f}_{L^2}^2 - \nu a_1\norm{\Delta_p f}_{L^2}^2 -a_2\norm{({\bf I}-\bp\otimes\bp)\nabla_xf}_{L^2}^2 \\
&\quad -\nu a_3 \norm{({\bf I}-\bp\otimes\bp)\nabla_x\nabla_pf}_{L^2}^2
 +a_1\norm{ ({\bf I}-\bp\otimes\bp)\nabla_xf}_{L^2}\norm{\nabla_pf}_{L^2} \\
 &\quad 
 +2\nu a_2\norm{ ({\bf I}-\bp\otimes\bp)\nabla_x\cdot(\nabla_pf)}_{L^2}\norm{\Delta_pf}_{L^2} \\
 &\quad +(d-1)\nu a_2 \norm{ \bp\cdot\nabla_xf}_{L^2}\norm{\Delta_pf}_{L^2} \\
 &\quad +\nu a_3 \norm{\bp\cdot\nabla_xf}_{L^2}\norm{({\bf I}-\bp\otimes\bp)\nabla_x\cdot(\nabla_pf)}_{L^2} \\
 &\quad + \nu a_3\norm{ ({\bf I}-\bp\otimes\bp)\nabla_xf}_{L^2}\norm{\bp\cdot\nabla_x(\nabla_pf) }_{L^2}\\
 &\le -(\nu-\delta_0a_1)\norm{\nabla_pf}_{L^2}^2 - \nu\bigg( a_1-a_2\delta_1 - (d-1) a_2 \delta_2\bigg)\norm{\Delta_pf}_{L^2}^2  \\
&\quad -\bigg(a_2-\frac{a_1}{4\delta_0}- \nu \frac{a_3}{4\delta_4} \bigg)\norm{ ({\bf I}-\bp\otimes\bp)\nabla_xf}_{L^2}^2 \\
&\quad -\nu\bigg( a_3- \delta_3 a_3-  \frac{a_2}{\delta_1} \bigg) \norm{({\bf I}-\bp\otimes\bp)\nabla_x\nabla_pf}_{L^2}^2 \\
&\quad + \nu\bigg( (d-1) \frac{a_2}{4\delta_2} +\frac{a_3}{4\delta_3} \bigg) \norm{\bp\cdot\nabla_xf}_{L^2}^2 + \nu \delta_4 a_3\norm{\bp\cdot\nabla_x(\nabla_pf) }_{L^2}^2\,,
\end{aligned}
\end{equation}
where each $\delta_j>0$ is also yet to be determined.  

From here, it will become crucial that we work mode-by-mode in $\bx$. % We consider a single spatial mode $\bk$ and take $k=\abs{\bk}$.
The chosen constants $a_j$ and $\delta_j$ will depend on both $\nu$ and $k$ in such a way that the right-hand side of \eqref{eq:pt_Phi0} is bounded by a multiple of $\Phi(t)$. 

We make use of the following Poincar\'e-type inequality on $\T^d$, cf. \cite[Lemma 3.8]{beckwaynebar}.
\begin{lemma}\label{lem:p_poincare}
For $f(\bp,\bx)=f_k(\bp)e^{i\bk\cdot\bx}$ with $\nabla_p f\in L^2$ and $({\bf I}-\bp\otimes\bp)\nabla_x f\in L^2$,  whenever $0<\nu<k$, we have
\begin{equation}\label{eq:p_poincare}
\norm{\nabla_x f}_{L^2}^2 \les k^{3/2}\nu^{1/2}\norm{\nabla_p f}_{L^2}^2 + k^{1/2}\nu^{-1/2}\norm{({\bf I}-\bp\otimes\bp)\nabla_x f}_{L^2}^2\,.
\end{equation}
\end{lemma}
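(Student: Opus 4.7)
Because $f = f_k(\bp) e^{i\bk \cdot \bx}$ is a single Fourier mode in $\bx$, after rotating so that $\bk = k\be_1$ one has
\begin{equation*}
\norm{\nabla_x f}_{L^2}^2 = k^2 \norm{f_k}_{L^2_p}^2, \qquad \norm{(\bI - \bp\otimes\bp)\nabla_x f}_{L^2}^2 = k^2 \int_{S^{d-1}}(1 - p_1^2)|f_k|^2\, d\bp,
\end{equation*}
since $|(\bI - \bp\otimes\bp)\be_1|^2 = 1 - p_1^2$. Dividing the claim by $k^2$ and setting $\epsilon := (\nu/k)^{1/2} \in (0,1)$, the lemma is equivalent to the weighted Poincar\'e inequality on the sphere
\begin{equation*}
\int_{S^{d-1}}|f_k|^2\, d\bp \les \epsilon \int_{S^{d-1}}|\nabla_p f_k|^2\, d\bp + \epsilon^{-1}\int_{S^{d-1}}(1 - p_1^2)|f_k|^2\, d\bp.
\end{equation*}
The plan is to prove this by splitting $S^{d-1}$ at the natural scale $\theta_0 := \sqrt{\epsilon}$ governed by the quadratic vanishing of $1 - p_1^2 = \sin^2\theta$ at the poles $\pm\be_1$, in the spirit of the one-dimensional Poincar\'e estimate of~\cite[Lemma 3.8]{beckwaynebar}.

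Let $A \subset S^{d-1}$ be the union of the two polar caps of angular radius $\theta_0$. On the complement, where $1 - p_1^2 \gtrsim \epsilon$, Markov's inequality immediately gives
\begin{equation*}
\int_{S^{d-1}\setminus A}|f_k|^2\, d\bp \les \epsilon^{-1}\int_{S^{d-1}}(1 - p_1^2)|f_k|^2\, d\bp,
\end{equation*}
which is of the desired form. By the reflection symmetry $\bp \mapsto -\bp$, it suffices to bound $\int_A|f_k|^2\, d\bp$ on the cap around $\be_1$ alone. For that I use spherical coordinates $\bp = \cos\theta\, \be_1 + \sin\theta\, \omega$ with $\omega \in S^{d-2}$, so that $d\bp = \sin^{d-2}\theta\, d\theta\, d\omega$ and $|\nabla_p f_k|^2 \ge |\p_\theta f_k|^2$. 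For $\theta \in (0,\theta_0)$ and $\theta^* \in (\theta_0, 2\theta_0)$, the fundamental theorem of calculus together with Cauchy--Schwarz applied with the weight $\sin^{d-2}s$ gives
\begin{equation*}
|f_k(\theta,\omega)|^2 \le 2|f_k(\theta^*,\omega)|^2 + 2\Big(\int_\theta^{\theta^*}\sin^{-(d-2)}s\, ds\Big)\int_0^{2\theta_0}|\p_s f_k(s,\omega)|^2 \sin^{d-2}s\, ds.
\end{equation*}

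What remains is weight bookkeeping. Multiplying by $\sin^{d-2}\theta$ and integrating $\theta \in (0,\theta_0)$, averaging $\theta^*$ against the measure $\sin^{d-2}\theta^*\, d\theta^*$ on $(\theta_0, 2\theta_0)$, and integrating in $\omega$ leads to
\begin{equation*}
\int_A|f_k|^2\, d\bp \les \int_{\{\theta_0 < \theta^* < 2\theta_0\}}|f_k|^2\, d\bp + \epsilon\int_{S^{d-1}}|\nabla_p f_k|^2\, d\bp,
\end{equation*}
once one verifies the kernel estimate $\int_0^{\theta_0}\sin^{d-2}\theta \int_\theta^{\theta^*}\sin^{-(d-2)}s\, ds\, d\theta = O(\theta_0^2) = O(\epsilon)$, which is a short direct computation in each of the relevant dimensions $d \in \{2,3\}$. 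The first term on the right is then absorbed into $\epsilon^{-1}\int(1-p_1^2)|f_k|^2\, d\bp$ exactly as on the complement of $A$. The delicate point is the tension between the degenerate spherical measure $\sin^{d-2}\theta$ at the pole and the simultaneous vanishing of $1 - p_1^2 = \sin^2\theta$ there: in $d = 3$ the weighted Cauchy--Schwarz introduces a logarithmic factor $\int_\theta^{\theta^*}\sin^{-1}s\, ds \sim \log(\theta^*/\theta)$ near the pole, and this factor is only absorbed after the subsequent $\sin\theta\, d\theta$ integration --- it is the vanishing $\sin\theta \sim \theta$ at the origin that restores the correct power of $\epsilon$ and closes the estimate.
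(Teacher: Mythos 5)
Your proof is correct and takes essentially the same approach as the paper: both arguments split $S^{d-1}$ at the critical angular scale $\theta_0 \sim (\nu/k)^{1/4}$ near the poles $p_1 = \pm 1$, use Markov's inequality on the complement (where $1 - p_1^2 \gtrsim (\nu/k)^{1/2}$), and prove a local Poincar\'e-type estimate with constant $O((\nu/k)^{1/2})$ on the caps. The only technical difference is in how that local estimate is obtained — the paper introduces a smooth cutoff $\varphi_\delta$ supported where $1 - p_1^2 \lesssim \delta$ and applies the standard Dirichlet Poincar\'e inequality to $f_k\varphi_\delta$ (absorbing the cutoff-derivative term into the Markov piece), whereas you derive it directly by the fundamental theorem of calculus, a weighted Cauchy--Schwarz, and an average over the transition shell; both handle the $d=3$ logarithmic loss correctly.
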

The proof of Lemma \ref{lem:p_poincare} appears in Appendix~\ref{app:p_poincare}. 

To show the bound \eqref{eq:Phi_enhance}, we need each term of $\Phi(t)$, defined by \eqref{eq:Phi_def}, to appear on the right-hand side of the inequality \eqref{eq:pt_Phi0} with an appropriate sign. Note that we are still missing a term proportional to $\norm{f}_{L^2}^2$.
We may use Lemma \ref{lem:p_poincare} to insert this term: in particular, for $a_4>0$, we have, for an appropriate constant $c_0$, 
\begin{equation}\label{eq:fL2_poin}
\begin{aligned}
&-a_4 k^2\norm{f}_{L^2}^2 + a_4c_0\bigg(k^{3/2}\nu^{1/2}\norm{\nabla_p f}_{L^2}^2 \\
&\hspace{5cm}+ k^{1/2}\nu^{-1/2}\norm{({\bf I}-\bp\otimes\bp)\nabla_x f}_{L^2}^2\bigg) \ge 0\,.
\end{aligned}
\end{equation}

Choosing the coefficients $a_j$ and $\delta_j$ in \eqref{eq:pt_Phi0} and \eqref{eq:fL2_poin} to satisfy 
\begin{equation}\label{eq:coeffs0}
\begin{aligned}
a_1&=\overline a_1\nu^{1/2}k^{-1/2}, \quad a_2=\overline a_2 k^{-1}, \quad a_3=\overline a_3\nu^{-1/2}k^{-3/2}, \\
a_4 &= \overline a_4 \nu^{1/2}k^{-3/2}, \quad 
\delta_0= \overline\delta_0\nu^{1/2}k^{1/2}, \quad \delta_1=\overline\delta_1\nu^{1/2}k^{1/2}, \\
\delta_2&=\overline\delta_2\nu^{1/2}k^{1/2}, \quad \delta_3=\overline\delta_3, \quad \delta_4=\overline \delta_4 \nu^{1/2}k^{-1/2}\,,
\end{aligned}
\end{equation}
and using Lemma \ref{lem:p_poincare} again to absorb the final two terms on the right-hand side of \eqref{eq:pt_Phi0} into previous terms, we obtain (increasing $c_0$ if necessary)
\begin{equation}\label{eq:pt_Phi1} 
\begin{aligned}
&\p_t\Phi(t)\\
&\le -\nu^{1/2}k^{1/2}\overline a_4\norm{f}_{L^2}^2 -\nu \bigg(1-\overline a_1\overline \delta_0 -(d-1) c_0 \frac{\overline a_2}{4\overline \delta_2} - c_0 \frac{\overline a_3}{4\overline \delta_3} -c_0\overline a_4\bigg)\norm{\nabla_p f}_{L^2}^2  \\
&\quad - \nu^{3/2}k^{-1/2}\bigg( \overline a_1- \overline a_2\overline \delta_1 - (d-1) \overline a_2 \overline \delta_2 - c_0\overline\delta_4 \overline a_3 \bigg)\norm{\Delta_pf}_{L^2}^2  \\
&\quad -k^{-1}\bigg(\overline a_2- \frac{\overline a_1}{4\overline\delta_0}- \frac{\overline a_3}{4\overline \delta_4} - (d-1)c_0\frac{\overline a_2}{4\overline \delta_2} - c_0\frac{\overline a_3}{4\overline \delta_3} -c_0\overline a_4\bigg)\norm{({\bf I}-\bp\otimes\bp)\nabla_x f}_{L^2}^2\\
&\quad  -\nu^{1/2}k^{-3/2}\bigg(\overline a_3- \overline \delta_3 \overline a_3-  \frac{\overline a_2}{\overline \delta_1}  - c_0\overline\delta_4 \overline a_3\bigg) \norm{({\bf I}-\bp\otimes\bp)\nabla_x\nabla_pf}_{L^2}^2\,.
\end{aligned}
\end{equation}

We now choose $\overline a_j$ and $\overline \delta_j$ such that the coefficients of each of the terms in \eqref{eq:pt_Phi1} are strictly negative. Recalling that we also need to satisfy the condition \eqref{eq:coeff_cond1}, we take
\begin{equation}\label{eq:coeffs1}
\begin{aligned}
\overline a_1 &= \frac{c_0}{64c_0^2+1},\quad 
\overline a_2 = \frac{\overline a_1}{16c_0} = \frac{1}{16(64c_0^2+1)}, \\
\overline a_3 &= \frac{\overline a_1}{64c_0^2} =\frac{1}{64c_0(64c_0^2+1)}, \quad
\overline a_4 = \frac{\overline a_1}{128c_0^2} = \frac{1}{128c_0(64c_0^2+1)},\\
\overline \delta_0&=64c_0,\quad   \overline \delta_1= 10c_0 , \quad \overline \delta_2 = (d-1)c_0, \quad \overline \delta_3 = \frac{1}{4}, \quad \overline \delta_4 = \frac{1}{4c_0}\,.
\end{aligned}
\end{equation}
Note that \eqref{eq:coeff_cond1} holds for $\delta=\frac{1}{16c_0}\nu^{-1/2}k^{-1/2}$. We thus have that $\p_t\Phi$ satisfies
\begin{equation}\label{eq:pt_Phi2}
\begin{aligned}
\p_t\Phi(t)&\le -\nu^{1/2}k^{1/2}\overline a_4\norm{f}_{L^2}^2 -\nu \frac{123}{128c_0}\overline a_1\norm{\nabla_p f}_{L^2}^2 \\
&\quad -k^{-1}\frac{c_0}{4}\overline a_3\norm{({\bf I}-\bp\otimes\bp)\nabla_x f}_{L^2}^2 -\nu^{3/2}k^{-1/2}7c_0^2\overline a_3 \norm{\Delta_pf}_{L^2}^2  \\
&\quad 
 -\nu^{1/2}k^{-3/2}  \frac{\overline a_3}{10} \norm{({\bf I}-\bp\otimes\bp)\nabla_x\nabla_pf}_{L^2}^2 \\
&\le -2\overline a_4\nu^{1/2}k^{1/2}\Phi(t)\,.
\end{aligned}
\end{equation}
Here we have used the upper bound \eqref{eq:Phi_upper} for $\Phi$ to bound the first three terms. Noting that $2\overline a_4=\overline a_3=\frac{1}{64c_0(64c_0^2+1)}$, we obtain the estimate \eqref{eq:Phi_enhance} of Theorem \ref{thm:Phi}.\\

%%%%%%%%%%%%%%%%%%%%%%%%%%%%%%%%%%
We next show that the estimate \eqref{eq:Q_enhance} holds for $L^2$ initial data. The strategy of the proof can be found in, e.g.,~\cite{michelepoiseuille}. 

We begin by noting that, from \eqref{eq:pt_est1}, $\norm{f}_{L^2}$ satisfies 
\begin{equation}\label{eq:finL2_eqn}
\norm{f(\cdot,t)}_{L^2}^2 = \norm{f^{\rm in}}_{L^2}^2 - 2\nu\int_0^t\norm{\nabla_p f}_{L^2}^2\,,
\end{equation}
and, in particular, 
\begin{equation}\label{eq:fin_est1}
\norm{f(\cdot,t)}_{L^2}^2 \le \norm{f^{\rm in}}_{L^2}^2\,.
\end{equation}
Notice that for $0\le t\le T_{\nu,k} := \frac{1+\abs{\log\nu}+\log k}{\alpha_0 \nu^{1/2}k^{1/2}}$, \eqref{eq:fin_est1} automatically implies \eqref{eq:Q_enhance}. 

For $t\ge T_{\nu,k}$, we first note that by \eqref{eq:finL2_eqn} and the mean value theorem, there exists $t^*\in \big(0,\frac{1}{\alpha_0\nu^{1/2}k^{1/2}}\big)$ such that
\begin{equation}\label{eq:nablapf_MVT}
2\nu\norm{\nabla_pf(\cdot,t^*)}_{L^2}^2 =\alpha_0 \nu^{1/2}k^{1/2}\left(\norm{f^{\rm in}}_{L^2}^2 - \norm{f(\cdot,t)}_{L^2}^2\right) \le \alpha_0 \nu^{1/2}k^{1/2}\norm{f^{\rm in}}_{L^2}^2\,.
\end{equation}
Furthermore, using \eqref{eq:coeffs0}, we may rewrite the upper bound \eqref{eq:Phi_upper} as 
\begin{equation}\label{eq:Phi_upper2}
\Phi(t) \le \frac{1}{2}\norm{f}_{L^2}^2 + \frac{3}{4}\overline a_1 \nu^{1/2}k^{-1/2}\norm{\nabla_pf}_{L^2}^2+ \frac{3}{4}\overline a_3 \nu^{-1/2}k^{-3/2}\norm{({\bf I}-\bp\otimes\bp)\nabla_xf}_{L^2}^2\,.
\end{equation}
Using \eqref{eq:nablapf_MVT} in \eqref{eq:Phi_upper2}, we then have
\begin{equation}\label{eq:Phi_star_bd}
\begin{aligned}
\Phi(t^*)&\le \frac{1}{2}\norm{f(\cdot,t^*)}_{L^2}^2 
+ \frac{3}{8} \alpha_0 \overline a_1 \norm{f^{\rm in}}_{L^2}^2
+ \frac{3}{4}\overline a_3 \nu^{-1/2}k^{-3/2}\norm{({\bf I}-\bp\otimes\bp)\nabla_xf(\cdot,t^*)}_{L^2}^2\\
&\le \frac{1}{2}\norm{f(\cdot,t^*)}_{L^2}^2 
+ \frac{3}{8} \alpha_0 \overline a_1 \norm{f^{\rm in}}_{L^2}^2
+ \frac{3}{4}\overline a_3 \nu^{-1/2}k^{1/2}\norm{f(\cdot,t^*)}_{L^2}^2 \\
&\le \bigg( \frac{1}{2} + \frac{3}{8}\alpha_0 \overline a_1 
+ \frac{3}{4}\overline a_3\bigg) \nu^{-1/2}k^{1/2}\norm{f^{\rm in}}_{L^2}\,,
\end{aligned}
\end{equation}
by \eqref{eq:fin_est1}. For $t\ge T_{\nu, k}$, we thus obtain
\begin{equation}\label{eq:fin_est2}
\begin{aligned}
\norm{f(\cdot,t)}_{L^2}^2 &\le 2\Phi(t) \\
&\le 2e^{-\alpha_0\nu^{1/2}k^{1/2}(t-t^*)}\Phi(t^*)\\
&\le \bigg( 1 + \frac{3}{4} \alpha_0\overline a_1 + \frac{3}{2}\overline a_3 \bigg)e^{\alpha_0 \nu^{1/2}k^{1/2}t^*}\nu^{-1/2}k^{1/2}e^{-\alpha_0 \nu^{1/2}k^{1/2}t}\norm{f^{\rm in}}_{L^2}^2 \\
&\le e\bigg( 1 + \frac{3}{4}\alpha_0 \overline a_1 + \frac{3}{2}\overline a_3 \bigg)\nu^{-1/2}k^{1/2}e^{-\alpha_0 \nu^{1/2}k^{1/2}t}\norm{f^{\rm in}}_{L^2}^2\,,
\end{aligned}
\end{equation}
where we have used that $t^*\le \frac{1}{\alpha_0 \nu^{1/2}k^{1/2}}$. Now, since $t\ge T_{\nu,k}$, the following bound holds:
\begin{equation}\label{eq:log_ID}
\nu^{-1/2}k^{1/2}e^{-\alpha_0 \nu^{1/2}k^{1/2}t} \le e^{-\frac{\alpha_0\nu^{1/2}k^{1/2}}{1+\abs{\log\nu}+\log k}t}\,.
\end{equation}
Inserting \eqref{eq:log_ID} in \eqref{eq:fin_est2} yields
\begin{equation}
\norm{f(\cdot,t)}_{L^2}^2 \le e\bigg( 1 + \frac{3}{4}\alpha_0 \overline a_1 + \frac{3}{2}\overline a_3 \bigg) \norm{f^{\rm in}}_{L^2}^2 e^{-\alpha_0 \frac{\nu^{1/2}k^{1/2}}{1+\abs{\log \nu}+\log k} t}\,,
\end{equation}
and, defining $\beta_0=e( 1 + \frac{3}{4}\alpha_0 \overline a_1 + \frac{3}{2}\overline a_3 )$, we obtain \eqref{eq:Q_enhance}. 
\end{proof}

%%%%%%%%%%%%%%%%%%%%%%%%%%%%%%%%%%%%%%%%%%%%%%%%%
\subsubsection{Enhancement for small \texorpdfstring{$\overline\psi$}{}}
We now consider the full linearized system \eqref{eq:linearizedf}--\eqref{eq:linearizedu} with small $\overline\psi>0$. 

\begin{proof}[Proof of Corollary \ref{cor:psibar}]
We begin by bounding $\nabla_x\bu$ in terms of $f$. Upon multiplying \eqref{eq:linearizedu} and integrating by parts, we have 
 \begin{equation}\label{eq:gradu_bd0}
 \norm{\nabla_x\bu}_{L^2}^2 = -\iota \langle \nabla_x\bu, \bp\otimes\bp f\rangle \le \norm{\nabla_x\bu}_{L^2}\norm{f}_{L^2} \le \frac{1}{2}\norm{\nabla_x\bu}_{L^2}^2 + \frac{1}{2}\norm{f}_{L^2}^2\,,
 \end{equation}
from which we obtain the bound
 \begin{equation}\label{eq:gradu_bd}
 \norm{\nabla_x\bu}_{L^2} \le \norm{f}_{L^2}\,.
 \end{equation}

We write $f$ satisfying \eqref{eq:linearizedf}--\eqref{eq:linearizedu} using Duhamel's formula: 
\begin{equation}\label{eq:f_duhamel}
f(\cdot,t) = e^{\mc{L}_\nu t}f^{\rm in} + d\overline\psi \int_0^t e^{\mc{L}_\nu(t-s)}\nabla_x\bu:\bp\otimes \bp\, ds\,.
\end{equation}
Note that, using \eqref{eq:gradu_bd} along with the semigroup estimate \eqref{eq:semigrp_est}, we have
\begin{equation}\label{eq:L2integral_est}
\norm{\int_0^t e^{\mc{L}_\nu (t-s)}\nabla_x\bu:\bp\otimes \bp\, ds}_{L^2} 
\le \beta_0^{1/2}\int_0^t e^{-\frac{\alpha_0}{2}\lambda_{\nu}(t-s)}\norm{f(\cdot,t)}_{L^2}\, ds \,.
\end{equation}
Then, multiplying \eqref{eq:f_duhamel} by the time weight $e^{\frac{\alpha_0}{4}\lambda_{\nu}t}$ and taking the $L^2$ norm in $\bx$ and $\bp$, we may estimate the integral term as 
\begin{equation}\label{eq:fL2_timewt}
\begin{aligned}
e^{\frac{\alpha_0}{4}\lambda_{\nu}t}\norm{f(\cdot,t)}_{L^2} &\le
e^{-\frac{\alpha_0}{4}\lambda_{\nu} t}\norm{f^{\rm in}}_{L^2} + d\overline\psi \beta_0^{1/2}  \int_0^t e^{-\frac{\alpha_0}{4}\lambda_{\nu}(t-s)} e^{\frac{\alpha_0}{4}\lambda_{\nu}s}\norm{f(\cdot,s)}_{L^2}\, ds \\
&\le
e^{-\frac{\alpha_0}{4}\lambda_{\nu} t}\norm{f^{\rm in}}_{L^2} +  \frac{\overline \psi}{\lambda_{\nu}} \frac{4d\beta_0^{1/2}}{\alpha_0} \sup_{t\in(0,T)}\left(e^{\frac{\alpha_0}{4}\lambda_{\nu}t}\norm{f(\cdot,t)}_{L^2} \right) \,.
\end{aligned}
\end{equation}
Thus, if we take $\overline\psi$ to satisfy
\begin{equation}\label{eq:psibar_bd0}
\overline\psi \le \lambda_{\nu}\frac{\alpha_0}{8d\beta_0^{1/2}}\,,
\end{equation}
we may absorb the $\norm{f(\cdot,t)}_{L^2}$ term on the right-hand side of \eqref{eq:fL2_timewt} into the left-hand side to obtain 
\begin{equation}\label{eq:fL2_timewt2}
\sup_{t\in(0,T)}\left(e^{\frac{\alpha_0}{4}\lambda_{\nu}t}\norm{f(\cdot,t)}_{L^2} \right) \le 2\sup_{t\in(0,T)}\left(e^{-\frac{\alpha_0}{4}\lambda_{\nu} t}\right)\norm{f^{\rm in}}_{L^2} \le 2\norm{f^{\rm in}}_{L^2}\,.
\end{equation}
From this we obtain \eqref{eq:psibar_enhance}. 
\end{proof}

Finally, we demonstrate a smoothing estimate which will be necessary in the nonlinear argument:
\begin{lemma}[Smoothing-in-$\bp$]\label{lem:smoothp}
(i) Let $f$ be the solution of the linearized PDE on $\T^d \times S^{d-1}$ with $\overline{\psi} \ll \lambda_\nu$ and $f(\cdot,0) = f^{\rm in} \in L^2$. Then
\begin{equation}
    \nu \int_0^{+\infty} e^{2c \lambda_\nu t} \int |\nabla_p f|^2 \, d\bx \, d\bp \, dt \les \| f^{\rm in} \|_{L^2}^2 \, .
\end{equation}
(ii) Let $h$ be the solution of the linearized PDE
\begin{equation}
    \label{eq:hpde}
    \p_t h + \bp \cdot \nabla_x h - d \overline{\psi} \nabla \bu : \bp \otimes \bp = \nu \Delta_p h + \kappa \Delta_x h + \div_p \bm{g}
\end{equation}
on $\T^d \times S^{d-1}$ for some $\bm{g} \in L^2(\T^d \times S^{d-1} \times \R_+)$, with $\overline{\psi} \ll \lambda_\nu$ and $h(\cdot,0) = 0$. Then
\begin{equation}
    \| h(\cdot,t) \|_{L^2}^2 \les \nu^{-1} \int_0^t e^{-2c\lambda_\nu (t-s)} \int |\bm{g}|^2 \, d\bx \, d\bp \, ds \, .
\end{equation}
\end{lemma}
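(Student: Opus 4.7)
The plan is to prove (i) by combining the basic $L^2$ energy identity for the linearized PDE,
\begin{equation*}
\tfrac{1}{2}\tfrac{d}{dt}\|f\|_{L^2}^2 + \nu \|\nabla_p f\|_{L^2}^2 + \kappa\|\nabla_x f\|_{L^2}^2 + \iota d\overline\psi\|\nabla\bu\|_{L^2}^2 = 0 \, ,
\end{equation*}
with the exponential decay $\|f(\cdot,t)\|_{L^2} \le 2 e^{-(\alpha_0/4)\lambda_\nu t}\|f^{\rm in}\|_{L^2}$ already furnished by Corollary~\ref{cor:psibar} (applied mode-by-mode to the nonzero modes, so that $\lambda_{\nu,k} \geq \lambda_\nu$). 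I would multiply the identity by $e^{2c\lambda_\nu t}$ for some fixed $c < \alpha_0/4$, integrate over $t\in(0,\infty)$, and integrate by parts in the time-derivative term. This yields
\begin{equation*}
\nu\int_0^{\infty} e^{2c\lambda_\nu t}\|\nabla_p f\|_{L^2}^2 \, dt \le \tfrac{1}{2}\|f^{\rm in}\|_{L^2}^2 + c\lambda_\nu \int_0^{\infty} e^{2c\lambda_\nu t}\|f\|_{L^2}^2 \, dt + d\overline\psi \int_0^{\infty} e^{2c\lambda_\nu t}\|\nabla\bu\|_{L^2}^2 \, dt \, .
\end{equation*}
Using $\|\nabla\bu\|_{L^2} \le \|f\|_{L^2}$ from \eqref{eq:gradu_bd} together with the decay estimate, both remaining time integrals are bounded by a multiple of $\|f^{\rm in}\|_{L^2}^2$; the nonlocal contribution carries a small coefficient $\lesssim \overline\psi/\lambda_\nu \ll 1$, and the conclusion is insensitive to the sign $\iota$.

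For (ii), I would pass to the dual statement, exactly in the spirit of the proof of Corollary~\ref{cor:smoothingestf}. By Duhamel, $h(t) = \int_0^t S(t-s)\div_p \bm{g}(s)\,ds$; testing against $\phi\in L^2$ with $\|\phi\|_{L^2}=1$ and integrating by parts in $\bp$ gives
\begin{equation*}
\langle h(t), \phi\rangle = -\int_0^t \langle \bm{g}(s), \nabla_p S^*(t-s) \phi \rangle \, ds \, ,
\end{equation*}
where $S^*$ denotes the $L^2$-adjoint semigroup. A short computation shows that $\bp\cdot\nabla_x$ is antisymmetric, the two Laplacians are self-adjoint, and the Stokes bilinear form $\iint \nabla\bu[f]:\bp\otimes\bp \, g \, d\bp \, d\bx$ is symmetric in $f,g$, so that $S^*(t) = R S(t) R$, where $R\phi(\bx,\bp) := \phi(\bx,-\bp)$ is the $\bp$-reflection (which leaves the nonlocal term and all norms above invariant). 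Consequently, estimate (i) applies verbatim to $S^*$. Cauchy--Schwarz with the weight $e^{2c\lambda_\nu(t-s)}$ then gives
\begin{equation*}
|\langle h(t),\phi\rangle|^2 \le \bigg(\int_0^t e^{-2c\lambda_\nu(t-s)} \|\bm{g}(s)\|_{L^2}^2 \, ds\bigg) \bigg(\int_0^t e^{2c\lambda_\nu(t-s)} \|\nabla_p S^*(t-s) \phi\|_{L^2}^2 \, ds\bigg) \, ,
\end{equation*}
and the second factor is bounded by $C\nu^{-1}\|\phi\|_{L^2}^2 = C\nu^{-1}$ via (i) applied to $S^*$. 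Taking the supremum over $\phi$ yields the claim.

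The main obstacle is the slightly delicate step of recognizing that $S^*$ obeys the same energy estimate as $S$. The transport and diffusion terms are routine, but the nonlocal Stokes coupling needs the symmetry of the bilinear form (equivalent to the self-adjointness of the Stokes solution operator acting on symmetric tensors), and then the $\bp \to -\bp$ symmetry must be invoked to reverse the sign flip introduced by the adjoint of $\bp\cdot\nabla_x$. Once these two observations are made, both parts reduce to a standard (weighted) energy / dual energy argument.
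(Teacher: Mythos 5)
Your argument is correct and follows essentially the same route as the paper: part (i) is the basic energy identity combined with the exponential decay of Corollary~\ref{cor:psibar} and the bound $\norm{\nabla\bu}_{L^2}\le\norm{f}_{L^2}$ (the paper packages this via a unit-time dyadic partition and telescoping sum, you via a weighted time integration by parts, but these are the same estimate); part (ii) is the same duality argument as in Corollary~\ref{cor:smoothingestf}, with your explicit identification $S^*(t)=RS(t)R$ via the parity $R:\bp\mapsto-\bp$ matching the paper's "reflecting $\bp\to-\bp$" step, and your verification that the Stokes coupling is a symmetric bilinear form supplying the needed self-adjointness.
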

\begin{proof}
To prove (i), we extract the following from the basic energy estimate:
\begin{equation}
\begin{aligned}
    \nu \int_{t_1}^{t_2} \int |\nabla_p f|^2 \, d\bx \, d\bp \,dt &\leq \frac{1}{2} \| f(\cdot,t_1) \|_{L^2}^2 + C \overline{\psi} \int_{t_1}^{t_2} \int |f|^2 \, d\bx \, d\bp \,dt \, \\
    &\leq C e^{-2c\lambda_\nu t_1}  \|f^{\rm in}\|_{L^2}^2  + C \overline{\psi} \int_{t_1}^{+\infty} e^{-2c\lambda_\nu t} \, dt\,  \| f^{\rm in} \|_{L^2}^2  \\
    &\leq C e^{-2c\lambda_\nu t_1}  \|f^{\rm in}\|_{L^2}^2 \, ,
    \end{aligned}
\end{equation}
since $\overline{\psi} \ll \lambda_\nu$. We choose $t_2 = t_1 + \lambda_\nu^{-1}$ and sum the estimate over $t_1 = k \lambda_\nu^{-1}$, $k = 0, 1, \hdots$. Finally, (ii) follows from a duality argument as in Corollary~\ref{cor:smoothingestf}. \end{proof}

%(In fact, this argument can be used to estimate $\| f \|_{L^2}^2$.)

%%%%%%%%%%%%%%%%%%%%%%%%%%%%%%%%%%%%%%%%%%%%%%%%%%%%%%%%%%%%%%%%%%%%%%%%%%%%%%

\subsection{Nonlinear enhancement}

% \lo{Also have to mention/comment on incorporation of x diffusion somewhere... including
% \begin{align*}
% e^{tL_{\nu,\kappa}^{\overline\psi,\iota}}=e^{t\mc{L}_{\nu,0}^{\iota,\overline\psi}}\, e^{t\kappa\Delta_x}
% \end{align*}}

% \begin{align}
% \p_t f_0 +L_{\nu,\kappa}^{\overline\psi,\iota}f_0 +\PP_0\big(\bu\cdot\nabla_x f\big) + \PP_0\big(\divp\big(({\bf I}-\bp\otimes\bp)(\nabla_x\bu \bp)f \big) \big) &= 0  \label{eq:0mode0}\\
% \p_t f_{\neq} +L_{\nu,\kappa}^{\overline\psi,\iota}f_{\neq} +\PP_{\neq}\big(\bu\cdot\nabla_x f\big) + \PP_{\neq}\big(\divp\big(({\bf I}-\bp\otimes\bp)(\nabla_x\bu \bp)f \big) \big)&= 0.  \label{eq:neqf0}
% \end{align}

% \textcolor{red}{Somewhere need to define the linear operator $L_{\nu,\kappa}^{\overline\psi,\iota}$: 
% \begin{equation}\label{eq:Lnukap_etc}
% L_{\nu,\kappa}^{\overline\psi,\iota} f = \bp\cdot\nabla_x f -\iota d\overline\psi \nabla_x\bu:\bp\otimes\bp- \nu \Delta_p f -\kappa\Delta_xf
% \end{equation} }

% \begin{equation}\label{eq:Lnukappa}
%     L_{\nu,\kappa}^{\overline\psi,\iota} f := \bp \cdot \nabla_x f - d\overline{\psi} \nabla \bu[f] : \bp \otimes \bp - \nu \Delta_p f - \kappa \Delta_x f 
% \end{equation}
% and its associated semigroup 
% \begin{equation}\label{eq:semigrp}
%     S(t) = e^{tL_{\nu,\kappa}^{\overline\psi,\iota}}
% \end{equation}
%  Note that since Fourier multipliers in $\bx$ commute, we have 
%  \begin{equation}\label{eq:add_in_kappa}
%      e^{t\kappa\Delta_x}e^{t L_{\nu,0}^{\overline\psi,\iota}} = S(t)\, .
%  \end{equation}

Finally, we consider the full nonlinear equation \eqref{eq:smoluchowski} for $\psi=\overline\psi+f$ on $\T^d$. Recalling the definition \eqref{eq:Lnukappa} of the linear operator $L_{\nu,\kappa}^{\overline\psi,\iota}$, we write \eqref{eq:smoluchowski} as an equation for the perturbation $f$: 
\begin{equation}\label{eq:nonlin_eq0}
\p_t f -L_{\nu,\kappa}^{\overline\psi,\iota}f +\bu\cdot\nabla_x f + \divp[({\bf I}-\bp\otimes\bp)(\nabla\bu[f] \bp)f ] = 0 \, .
\end{equation} 
We show that for sufficiently small $f^{\rm in}$ and $\overline\psi$, the nonlinear evolution of $f$ satisfies the same enhanced dissipation as the linear evolution (Corollary \ref{cor:psibar}). \\

Noting that the $k=0$ spatial mode is not enhanced, we must treat the $k=0$ and $k\neq0$ modes of $f$ separately. We define $\PP_0$ to be the projection onto the $k=0$ spatial mode,
\begin{equation}
\PP_0 h(\bx,\bp,t) = \int_{\T^d} h(\bx,\bp,t) \, d\bx \, ,
\end{equation}
and define $\PP_{\neq}={\rm Id}-\PP_0$. We consider separately the evolution of  
\begin{equation}
f_0:= \PP_0 f \quad \text{and} \quad f_{\neq}:=\PP_{\neq}f\, .
\end{equation}

Note that $\PP_0$ commutes with $L_{\nu,\kappa}^{\overline\psi,\iota}$ and that $L_{\nu,\kappa}^{\overline\psi,\iota}f_0=-\nu\Delta_p f_0$. Further note that $\PP_0\big(\bu\cdot\nabla_x f\big) = \PP_0\big(\div_x(\bu f)\big)=0$ and that, since $\PP_0\nabla_x\bu=0$, the only contribution to the $k=0$ mode from the $\divp$ term in \eqref{eq:nonlin_eq0} is due to interactions between $\bu[f_{\neq}]$ and $f_{\neq}$. The evolution of the zero mode $f_0(\bp,t)$ thus satisfies a forced heat equation in $\bp$ whose forcing depends only on nonzero modes: 
\begin{equation}\label{eq:0mode1}
\p_t f_0 -\nu\Delta_p f_0 + \PP_0\,\divp[({\bf I}-\bp\otimes\bp)(\nabla_x\bu[f_{\neq}] \bp)f_{\neq} ] = 0.
\end{equation} 
Using that $\bu[f_0]=0$, the nonzero modes $f_{\neq}(\bx,\bp,t)$ evolve via
\begin{equation}\label{eq:neqf1}
\begin{aligned}
\p_t f_{\neq} -L_{\nu,\kappa}^{\overline\psi,\iota}f_{\neq}+\bu[f_{\neq}]\cdot\nabla_x f_{\neq} &+ \PP_{\neq}\divp[({\bf I}-\bp\otimes\bp)(\nabla_x\bu[f_{\neq}] \bp)f_{\neq} ]\\
& + \PP_{\neq}\divp[({\bf I}-\bp\otimes\bp)(\nabla_x\bu[f_{\neq}] \bp)f_0 ]= 0.
\end{aligned}
\end{equation}

We are now equipped to prove the nonlinear enhancement result of Theorem \ref{thm:nonlin_enhance}.

%%%
% \begin{theorem}[Nonlinear enhanced dissipation for small concentrations]\label{thm:nonlin_enhance}
% Let $\nu,\kappa>0$, $0\le f^{\rm in}\in H^1_xL^2_p$, and suppose that $\overline{\psi} \ll \lambda_\nu^{-1}$. There exists $0<\epsilon_0\ll \min(\kappa^{1/2},\nu^{1/2})\lambda_\nu^{1/2}$ such that if 
% \begin{equation}
% \varepsilon:=\norm{f_{\neq}^{\rm in}}_{H^2_xL^2_p} \le \varepsilon_0 
% \quad \text{and} \quad
% \norm{f^{\rm in}_0}_{L^2_p}\le \varepsilon_0 
% \end{equation}
% then the nonzero modes of \textcolor{red}{the solution $f$ to \eqref{eq:nonlin_eq0}} satisfy the enhanced decay rate
% \begin{equation}
% \norm{f_{\neq}(\cdot,t)}_{H^2_xL^2_p} \lesssim e^{-\delta_{\neq} \lambda_\nu t}\varepsilon\,.
% \end{equation}
% Furthermore, the zero mode satisfies the bound
% \begin{equation}
% \norm{f_0}_{L^2_p} \lesssim e^{-\delta_0\nu t}\bigg(\norm{f^{\rm in}_0}_{L^2_p}+\nu^{-1}C_0^2\varepsilon^2\bigg) \, .
% \end{equation}
% \end{theorem}

% \textcolor{red}{[Need to define $\delta_0$ and $\delta_{\neq}$ wrt $c_{\neq}$, $c_0$ from linear evolution... ALSO unify names with linear part]}\\

%%%
\begin{proof}[Proof of Theorem \ref{thm:nonlin_enhance}]
We begin by defining our function spaces. For the zero mode evolution, we will consider the space $X_T=\{g\in C_tH^2_xL^2_p \, : \, \norm{g}_{X_T}<\infty\}$; for the nonzero modes we consider $Y_T=\{g\in C_tH^2_xL^2_p \, : \, \norm{g}_{Y_T}<\infty\}$,
where the norms $\norm{\cdot}_{X_T}$ and $\norm{\cdot}_{Y_T}$ are given by
\begin{equation}
\norm{\cdot}_{X_T} = \sup_{t\in[0,T]}\, e^{\delta_0\nu t}\norm{\cdot}_{H^2_xL^2_p}, \qquad  \norm{\cdot}_{Y_T} = \sup_{t\in[0,T]}\,e^{\delta_{\neq}\lambda_\nu t}\norm{\cdot}_{H^2_xL^2_p} 
\end{equation}
for constants $0<\delta_0<c_0$ and $0<\delta_{\neq}<c_{\neq}$, where $c_0$ (defined in \eqref{eq:heatp_ests}) and $c_{\neq}=\alpha_0/4$ (defined in Corollary \ref{cor:psibar}) are decay rates from the linear theory.

We again use a bootstrap argument to show Theorem \ref{thm:nonlin_enhance}. For some $\varepsilon\le \varepsilon_0$ and $C_0>2$ to be determined, we assume that $\norm{f_{\neq}}_{Y_T} \le C_0\varepsilon$, and aim to show that 
\begin{equation}\label{eq:bootstrap_tbd}
 \norm{f_{\neq}}_{Y_T} \le \frac{C_0}{2}\varepsilon\,.
 \end{equation} 

%%%

To show \eqref{eq:bootstrap_tbd}, we begin by bounding the evolution of $f_0$ in \eqref{eq:0mode1}, since $f_{\neq}$ depends on $f_0$ via \eqref{eq:neqf1}. Using Duhamel's formula, we may write $f_0$ as
\begin{equation}\label{eq:f0_duhamel}
\begin{aligned}
f_0(\bp,t) &= e^{t\nu\Delta_p }f^{\rm in}_0(\bp) + B_0(f_{\neq},f_{\neq})(\bp,t)\,, \\
B_0(f,g)(\bp,t) &= -\PP_0\int_0^t e^{(t-s)\nu\Delta_p} \divp[({\bf I}-\bp\otimes\bp)(\nabla_x\bu[g] \bp)f ](\bx,\bp,s)\, ds\,.
\end{aligned}
\end{equation}
Note that by standard estimates for the heat equation on $S^{d-1}$, we have
\begin{equation}\label{eq:heatp_ests}
\norm{e^{t\nu\Delta_p }}_{L^2_p\to L^2_p} \lesssim e^{-c_0\nu t}\,, \qquad \norm{e^{t\nu\Delta_p }\divp}_{L^2_p\to L^2_p} \lesssim (\nu t)^{-1/2}e^{-c_0\nu t}
\end{equation}
for some $c_0>0$. Using \eqref{eq:heatp_ests} and that $H^2_x$ is an algebra, we may estimate the bilinear term $B_0$ in \eqref{eq:f0_duhamel} as
\begin{equation}
\begin{aligned}
&\norm{B_0(f_{\neq},f_{\neq})(\cdot,t)}_{H^2_xL^2_p} \\
&\qquad \lesssim \int_0^t(\nu (t-s))^{-1/2}e^{-c_0\nu(t-s)}\norm{({\bf I}-\bp\otimes\bp)(\nabla_x\bu_{\neq} \bp)f_{\neq} }_{H^2_xL^2_p} \, ds \\
&\qquad \lesssim \int_0^t (\nu(t-s))^{-1/2} e^{-c_0\nu(t-s)}e^{-2\delta_{\neq}\lambda_\nu s} \, ds\,  \norm{f_{\neq}}_{Y_T}^2 \\
&\qquad  \lesssim e^{-c_0\nu t}\nu^{-1}\int_0^{\nu t} (\nu t-s)^{-1/2} \underbrace{e^{s}e^{-c\lambda_\nu\nu^{-1} s}}_{\leq e^{-c\lambda_\nu\nu^{-1} s/2}} \, ds\,  \norm{f_{\neq}}_{Y_T}^2 % \\
%
%&\qquad \textcolor{blue}{ \lesssim e^{-c_0\nu t}\nu^{-1}\int_0^{\nu t} (\nu t-s)^{-1/2} e^{-c\lambda_\nu\nu^{-1} s/2} \, ds\,  \norm{f_{\neq}}_{Y_T}^2 }
\end{aligned}
\end{equation}
for $\nu$ sufficiently small. Then, using the estimate (see Appendix \ref{app:p_poincare})
\begin{equation}
\label{eq:thebthing}
\int_0^t (t-s)^{-a} e^{-bs} \, ds \les_{a} b^{a-1} \, , \quad\quad \forall a \in (0,1) \, ,   b > 0 \, ,  t > 0 \, ,
\end{equation}
with $a=1/2$, $b = c\lambda_\nu \nu^{-1}/2$, we have 
\begin{equation}
\norm{B_0(f_{\neq},f_{\neq})(\cdot,t)}_{H^2_xL^2_p} \lesssim e^{-c_0\nu t}\nu^{-1/2}\lambda_\nu^{-1/2}\,\norm{f_{\neq}}_{Y_T}^2\,.
\end{equation}
In particular, again using \eqref{eq:heatp_ests}, $f_0$ satisfies
\begin{equation}\label{eq:f0X0est}
\norm{f_0}_{X_T} \lesssim \norm{f^{\rm in}_0}_{L^2_p}+\nu^{-1/2}\lambda_\nu^{-1/2}C_0^2\varepsilon^2 \, .
\end{equation}

\begin{comment}
Old text here
\begin{equation}
\begin{aligned}
&\norm{B_0(f_{\neq},f_{\neq})(\cdot,t)}_{H^2_xL^2_p} \\
&\qquad \lesssim \nu^{-1/2}\int_0^t(t-s)^{-1/2}e^{-c_0\nu(t-s)}\norm{({\bf I}-\bp\otimes\bp)(\nabla_x\bu_{\neq} \bp)f_{\neq} }_{H^2_xL^2_p} \, ds \\
%
&\qquad \lesssim \nu^{-1/2}\int_0^t(t-s)^{-1/2}e^{-c_0\nu(t-s)}e^{-2\delta_{\neq}\lambda_\nu s} \, ds \norm{f_{\neq}}_{Y_T}^2 \\
%
&\qquad \lesssim \nu^{-1}e^{-c_0\nu t}(C_0\varepsilon)^2\,.
\end{aligned}
\end{equation}
In particular, again using \eqref{eq:heatp_ests}, $f_0$ satisfies
\begin{equation}\label{eq:f0X0est}
\norm{f_0}_{X_T} \lesssim \norm{f^{\rm in}_0}_{L^2_p}+\nu^{-1}C_0^2\varepsilon^2 \, .
\end{equation}
\end{comment}

%%%%%
We next consider the evolution of $f_{\neq}$. Recalling the definition of $S(t)$ in \eqref{eq:semigrp} as the semigroup associated with   $L_{\nu,\kappa}^{\overline\psi,\iota}$ from \eqref{eq:Lnukappa}, we may write $f_{\neq}$ using Duhamel's formula as
\begin{equation}\label{eq:fneq_duhamel}
f_{\neq}(\cdot,t) = S(t)f_{\neq}^{\rm in}(\cdot) + B_1(f_{\neq},f_{\neq})(\cdot,t)+B_2(f_{\neq},f_{\neq})(\cdot,t) + B_2(f_0,f_{\neq})(\cdot,t)
\end{equation}
where
\begin{align}
B_1(f,g)(\cdot,t)&= -\int_0^t S(t-s)\, \div_x(\bu[g]f)(\cdot,s)\, ds \\
B_2(f,g)(\cdot,t) &= -\PP_{\neq}\int_0^t S(t-s)\,\divp [({\bf I}-\bp\otimes\bp)(\nabla_x\bu[g] \bp)f ](\cdot, s) \,ds\,.
\end{align}

We proceed to estimate each term in \eqref{eq:fneq_duhamel}. In addition to the smoothing-in-$\bp$ estimate of Lemma \ref{lem:smoothp}, we will make use of the following more standard semigroup estimates, which follow directly from Corollary \ref{cor:psibar} with $c_{\neq}=\frac{\alpha_0}{4}$: 
\begin{equation}\label{eq:St_ests}
\|S(\cdot,t)\|_{L^2_{x,p}\to L^2_{x,p}} \lesssim e^{-c_{\neq}\lambda_\nu t}\,, \quad 
\|S(\cdot,t)\div_x\|_{L^2_{x,p}\to L^2_{x,p}} \lesssim (\kappa t)^{-1/2}e^{-c_{\neq}\kappa t}e^{-c_{\neq}\lambda_\nu t}\,.
\end{equation}
Note that the second bound in \eqref{eq:St_ests} uses the decomposition \eqref{eq:add_in_kappa}.

Using \eqref{eq:St_ests}, we have 
\begin{equation}\label{eq:B1fneq}
\begin{aligned}
\norm{B_1(f_{\neq},f_{\neq})(\cdot,t)}_{H^2_xL^2_p}
&\le \int_0^t \norm{S(t-s)\, \div_x(\bu[f_{\neq}]f_{\neq})(\cdot,s)}_{H^2_xL^2_p}\, ds \\
&\le \int_0^t \kappa^{-1/2}(t-s)^{-1/2}e^{-c_{\neq}\lambda_\nu(t-s)} \norm{\bu[f_{\neq}]f_{\neq}}_{H^2_xL^2_p} \, ds \\
&\lesssim \kappa^{-1/2}\int_0^t (t-s)^{-1/2}e^{-c_{\neq}\lambda_\nu (t-s)}e^{-2\delta_{\neq}\lambda_\nu s} \, ds \, \norm{f_{\neq}}_{Y_T}^2 \\
&\lesssim \kappa^{-1/2}\lambda_\nu^{-1/2}e^{-\delta_{\neq}\lambda_\nu t}(C_0\varepsilon)^2,
\end{aligned}
\end{equation}
where we have also used  $\norm{\bu[f_{\neq}]}_{H^3_xL^2_p}\les\norm{f_{\neq}}_{H^2_xL^2_p}$. 

Next, applying Lemma \ref{lem:smoothp} with $\bm{g}=[({\bf I}-\bp\otimes\bp)(\nabla_x\bu[f_{\neq}] \bp)f_{\neq} ]$, we obtain 
\begin{equation}\label{eq:B2fneq}
\begin{aligned}
\norm{B_2(f_{\neq},f_{\neq})(\cdot,t)}_{H^2_xL^2_p}^2  
%\int_0^t \norm{S(t-s)\,\divp [({\bf I}-\bp\otimes\bp)(\nabla_x\bu[f_{\neq}] \bp)f_{\neq} ](\cdot,s)}_{H^2_xL^2_p}\,ds \\
&\lesssim \nu^{-1}\int_0^t e^{-2c_{\neq}\lambda_\nu(t-s)} \norm{\nabla_x\bu[f_{\neq}] \,f_{\neq}}_{H^2_xL^2_p}^2 \, ds \\
&\lesssim \nu^{-1}\int_0^t e^{-2c_{\neq}\lambda_\nu(t-s)} e^{-4\delta_{\neq}\lambda_\nu s} \, ds \norm{f_{\neq}}_{Y_T}^4\\
&\lesssim \nu^{-1}\lambda_\nu^{-1}e^{-2\delta_{\neq}\lambda_\nu t}(C_0\varepsilon)^4\,.
\end{aligned}
\end{equation}

Finally, using Lemma \ref{lem:smoothp} along with the estimate \eqref{eq:f0X0est}, we have 
\begin{equation}\label{eq:B2f0}
\begin{aligned}
\norm{B_2(f_0,f_{\neq})(\cdot,t)}_{H^2_xL^2_p}^2 
%&\le \int_0^t \norm{S(t-s)\,\divp [({\bf I}-\bp\otimes\bp)(\nabla_x\bu[f_{\neq}] \bp)f_0 ](\cdot,s)}_{H^2_xL^2_p}\,ds \\
&\lesssim \nu^{-1}\int_0^t e^{-2c_{\neq}\lambda_\nu(t-s)} \norm{\nabla_x\bu[f_{\neq}] \,f_0}_{H^2_xL^2_p}^2 \, ds \\
&\lesssim \nu^{-1}\int_0^t e^{-2c_{\neq}\lambda_\nu(t-s)} e^{-2\delta_{\neq}\lambda_\nu s}e^{-2\delta_0\nu s} \, ds \norm{f_{\neq}}_{Y_T}^2\norm{f_0}_{X_T}^2\\
&\lesssim \nu^{-1}\lambda_\nu^{-1}e^{-2\delta_{\neq}\lambda_\nu t}(C_0\varepsilon)^2(\norm{f^{\rm in}_0}_{L^2_p}+ \nu^{-1/2}\lambda_\nu^{-1/2}C_0^2\varepsilon^2  )^2 \, .
\end{aligned}
\end{equation}

Combining the bounds \eqref{eq:B1fneq}, \eqref{eq:B2fneq}, and \eqref{eq:B2f0}, and using \eqref{eq:St_ests} to bound the evolution of $f_{\neq}^{\rm in}$, we thus obtain 
\begin{equation}
\begin{aligned}
\norm{f_{\neq}}_{Y_T} &\le \norm{f_{\neq}^{\rm in}}_{H^2_xL^2_p} + \norm{B_1(f_{\neq},f_{\neq})}_{Y_T}+ \norm{B_2(f_{\neq},f_{\neq})}_{Y_T} + \norm{B_2(f_0,f_{\neq})}_{Y_T} \\
&\lesssim \big[1 + \kappa^{-1/2}\lambda_\nu^{-1/2}C_0^2\varepsilon+ \nu^{-1/2}\lambda_\nu^{-1/2}C_0^2\varepsilon \\
&\qquad + \nu^{-1/2}\lambda_\nu^{-1/2}C_0\big(\norm{f^{\rm in}_0}_{L^2_p}+ \nu^{-1/2}\lambda_\nu^{-1/2}C_0^2\varepsilon^2 \big)\big]\, \varepsilon \, .
\end{aligned}
\end{equation}
To close the bootstrap argument \eqref{eq:bootstrap_tbd}, we need 
\begin{equation}
\begin{aligned}
&C_0^{-1} + \kappa^{-1/2}\lambda_\nu^{-1/2}C_0\varepsilon+ \nu^{-1/2}\lambda_\nu^{-1/2}C_0\varepsilon \\
&\qquad + \nu^{-1/2}\lambda_\nu^{-1/2}\big(\norm{f^{\rm in}_0}_{L^2_p}+\nu^{-1/2}\lambda_\nu^{-1/2}C_0^2\varepsilon^2 \big) \ll 1\,,
\end{aligned}
\end{equation}
which can be satisfied if $C_0\gg1$, $\varepsilon\ll \min(\kappa^{1/2},\nu^{1/2})\lambda_\nu^{1/2}$, and $\norm{f^{\rm in}_0}_{L^2_p}\ll \nu^{1/2}\lambda_\nu^{1/2}$. 
\end{proof}

\appendix
\numberwithin{theorem}{section}

\section{Nondimensionalization}\label{app:nondim}
To facilitate comparison with results from the applied and computational literature, we comment here on our choice of nondimensionalization for equations \eqref{eq:smoluchowski}-\eqref{eq:activestress}. 
The fully dimensional version of the kinetic model is given by 
\begin{align}
\p_t \psi +V_0\bp\cdot\nabla\psi +\bu\cdot\nabla\psi +\div_p[({\bf I}-\bp\otimes\bp)\nabla\bu \bp \, \psi] &= d_{\rm r} \Delta_p\psi + d_{\rm t} \Delta_x\psi   \label{eq:dimeq1}  \\
-\mu \Delta \bu + \nabla q = \div {\bm \Sigma}\,, \; \div\bu &=0
\label{eq:dimeq2}
\\
 {\bm \Sigma}=\sigma_0\int_{S^{d-1}} \psi(\bx,\bp,t) \bp\otimes\bp &\, d\bp\,,   \label{eq:dimeq3}
\end{align}
where $V_0$ is the average swimming speed of the particles, $d_{\rm r}$ and $d_{\rm t}$ are the rotational and translational diffusion coefficients, $\mu$ is the fluid viscosity, and $\sigma_0$ is the (signed) active stress magnitude. The above system is considered on a $d$-dimensional periodic box with side length $L$, that is, $\R^d/L\Z^d$.

We nondimensionalize \eqref{eq:dimeq1}-\eqref{eq:dimeq3} according to 
\begin{equation}\label{eq:nondim}
 \bu^* = \frac{1}{V_0}\bu\,, \quad \bx^*= \frac{2\pi }{L}\bx\,, \quad t^*=\frac{2\pi V_0}{L}t\,, \quad \psi^* = \frac{L\abs{\sigma_0}}{2\pi \mu V_0}\psi \,.
 \end{equation}
Here, the (nondimensionalized) particle number density
\begin{equation}\label{eq:barpsidef} 
\overline\psi = \frac{1}{L^d}\int_{\T^d}\int_{S^{d-1}}\psi^* \, d\bp \,d\bx
\end{equation}
is a free parameter. 
%The uniform isotropic state is then defined to be the constant $\overline \psi=\frac{1}{n^*}$. 
Note that we may write
\begin{equation}
\overline\psi = \frac{L\abs{\sigma_0}n_\psi}{2\pi \mu V_0}\, ,
\end{equation}
where $n_\psi$ is the dimensional particle number density, given by \eqref{eq:barpsidef} with $\psi$ in place of $\psi^*$. The dimensionless rotational and translational diffusion coefficients $\nu$, $\kappa$ in \eqref{eq:smoluchowski} are given by 
\begin{equation}\label{eq:nu}
    \nu = \frac{d_{\rm r}L}{2\pi V_0}\,, \quad \kappa = \frac{d_{\rm t}2\pi}{L V_0}\,.
\end{equation}

%%%%%%%%%%%%%%%%%%%%%%%%%%%%%%%%%%%%%%%%%%%%%%%%%%%%%%%%%%%%%%%%%%%%%%%%
%%%%%%%%%%%%%%%%%%%%%%%%%%%%%%%%%%%%%%%%%%%%%%%%%%%%%%%%%%%%%%%%%%%%%%%%

\section{Strong solution theory}
\label{app:strongsols}

Let $T>0$, $\Omega^d = \T^d$ or $\R^d$, and $d=2,3$. Let $\nu > 0$ and $\kappa \geq 0$.
\begin{definition}[Strong solution]
\label{def:strongsol}
A non-negative function $\psi : \Omega^d \times S^{d-1} \times [0,T] \to [0,+\infty)$ is a \emph{strong solution} to~\eqref{eq:smoluchowski}-\eqref{eq:activestress} on $ \Omega^d \times S^{d-1} \times (0,T)$ with initial data $\psi^{\rm in} \in H^2_x L^2_p \cap L^1_x L^1_p(\Omega^d \times S^{d-1})$ if the following requirements are satisfied. Namely,
\begin{equation}
    \text{(i)} \quad \psi \in C([0,T];H^2_x L^2_p) \cap L^2_t H^2_x H^1_p \cap C([0,T];L^1_x L^1_p) \, ,
\end{equation}
(ii) the PDE \eqref{eq:smoluchowski}-\eqref{eq:activestress} is satisfied in the sense of distributions on $ \Omega^d \times S^{d-1} \times (0,T)$, and (iii)
$\| \psi(\cdot,t) - \psi^{\rm in} \|_{L^2} \to 0$ as $t \to 0^+$.
\end{definition}

\begin{theorem}[Strong solution theory]
\label{thm:strongsol}
\emph{(Existence)}. For all $0 \leq \psi^{\rm in} \in H^2_x L^2_p \cap L^1_x L^1_p$, there exists $\bar{T} = \bar{T}(\| \psi^{\rm in} \|_{H^2_x L^2_p},\nu) > 0$ (the \emph{guaranteed existence time}) and a strong solution $\psi$ to~\eqref{eq:smoluchowski}-\eqref{eq:activestress} on $\Omega^d \times S^{d-1} \times (0,\bar{T})$ with initial data $\psi^{\rm in}$.

\emph{(Uniqueness)}. If $\psi, \tilde{\psi}$ are two strong solutions on $\Omega^d \times S^{d-1} \times (0,T)$, then $\psi \equiv \tilde{\psi}$.
\end{theorem}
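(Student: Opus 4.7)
The plan is to construct $\psi$ on a short time interval via Picard iteration, close the $H^2_x L^2_p$ a priori bounds by an energy estimate, preserve nonnegativity at each iterate via the maximum principle, and deduce uniqueness from an $L^2$ energy estimate on the difference of two solutions.

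For existence, I would set $\psi_0 \equiv \psi^{\rm in}$ and inductively define $\bu_n := \bu[\psi_n]$ via~\eqref{eq:stokes}--\eqref{eq:activestress}, then let $\psi_{n+1}$ solve the linear PDE
\begin{equation}
    \p_t \psi_{n+1} + (\bp + \bu_n) \cdot \nabla_x \psi_{n+1} + \divp[(\bI - \bp \otimes \bp)(\nabla \bu_n \bp) \psi_{n+1}] = \nu \Delta_p \psi_{n+1} + \kappa \Delta_x \psi_{n+1}
\end{equation}
with $\psi_{n+1}(\cdot,0) = \psi^{\rm in}$. Since Stokes regularity yields $\bu_n \in H^3_x \hookrightarrow W^{1,\infty}_x$ whenever $\psi_n \in H^2_x L^2_p$, the drift and zeroth-order coefficients are bounded, and the linear equation is uniquely solvable in $C_t H^2_x L^2_p \cap L^2_t H^2_x H^1_p$ (e.g.\ by Galerkin approximation in $\bp$-spherical harmonics combined with standard parabolic/transport estimates in $\bx$). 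Commuting $\nabla_x^\alpha$ for $|\alpha| \le 2$ through the equation, testing against $\nabla_x^\alpha \psi_{n+1}$, and using the algebra property of $H^2_x$ (valid for $d \le 3$) together with $\|\bu_n\|_{H^3_x} \les \|\psi_n\|_{H^2_x L^2_p}$, should yield a differential inequality of the form $\tfrac{d}{dt}\|\psi_{n+1}\|_{H^2_x L^2_p}^2 \les P(\|\psi_n\|_{H^2_x L^2_p}, \|\psi_{n+1}\|_{H^2_x L^2_p})$ for a polynomial $P$; hence the iterates remain uniformly bounded in $L^\infty H^2_x L^2_p$ on $[0,\bar T]$ for some $\bar T = \bar T(\|\psi^{\rm in}\|_{H^2_x L^2_p}, \nu) > 0$. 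I would then deduce contraction in $L^\infty_{[0,\bar T]} L^2_{x,p}$ (after shrinking $\bar T$ if necessary) from an analogous $L^2_{x,p}$ estimate on $w_n := \psi_{n+1} - \psi_n$, using $\|\bu[w_n]\|_{H^1_x} \les \|w_n\|_{L^2_{x,p}}$ and $H^2_x \hookrightarrow L^\infty_x$. The limit $\psi$ lies in $L^\infty H^2_x L^2_p$, satisfies the PDE distributionally, and enjoys strong $H^2_x L^2_p$-continuity in $t$ by interpolation with weak-$*$ continuity; $L^1$-mass conservation follows by integrating the PDE. Nonnegativity is preserved at each Picard step by the maximum principle (bounded coefficients, $\psi^{\rm in} \ge 0$), and passes to the limit.

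For uniqueness, I would observe that the difference $w = \psi - \tilde\psi$ of two strong solutions satisfies
\begin{equation}
\begin{aligned}
    \p_t w + (\bp + \bu) \cdot \nabla_x w &+ \bu[w] \cdot \nabla_x \tilde\psi + \divp[(\bI - \bp \otimes \bp)(\nabla \bu \bp) w] \\
    &+ \divp[(\bI - \bp \otimes \bp)(\nabla \bu[w] \bp) \tilde\psi] = \nu \Delta_p w + \kappa \Delta_x w.
\end{aligned}
\end{equation}
Testing against $w$, the $\bu \cdot \nabla_x w$ term vanishes by $\div \bu = 0$, and $\divp$ fluxes are integrated by parts. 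The Stokes bound $\|\bu[w]\|_{H^1_x} \les \|w\|_{L^2_{x,p}}$ combined with $\|\tilde\psi\|_{L^\infty_x L^2_p} \les \|\tilde\psi\|_{H^2_x L^2_p}$ (from $H^2_x \hookrightarrow L^\infty_x$) controls the two cross terms; after using Young's inequality to absorb $\tfrac{\nu}{2}\|\nabla_p w\|_{L^2}^2$ into the dissipation, one obtains $\tfrac{d}{dt}\|w\|_{L^2}^2 \les (1 + \|\psi\|_{H^2_x L^2_p}^2 + \|\tilde\psi\|_{H^2_x L^2_p}^2) \|w\|_{L^2}^2$, and Gr\"onwall with $w(\cdot,0) = 0$ gives $w \equiv 0$.

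The hard part will be closing the $H^2_x L^2_p$ a priori bound through the iteration---specifically, handling the highest-order $\nabla_x^2$ derivatives of the nonlocal term $\divp[(\bI - \bp\otimes\bp)(\nabla\bu_n\bp)\psi_{n+1}]$, which mixes $\bu_n$ and $\psi_{n+1}$ at the same derivative level. The algebra property of $H^2_x$ (tying us to $d \le 3$), paired with the $+1$-derivative gain in Stokes regularity, just barely suffices. By contrast, the case $\kappa = 0$ introduces no new difficulty, since the dissipation needed for the closing estimates is provided by $\nu \Delta_p$ alone.
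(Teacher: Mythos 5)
Your outline matches the paper's strategy closely: Picard iteration, $H^2_x L^2_p$ a~priori energy estimates using the Stokes gain and the algebra property of $H^2_x$, contraction in $L^2_{x,p}$, positivity by the maximum principle, and uniqueness via an $L^2$ energy estimate on the difference. The paper regularizes via mollification rather than Galerkin, but that is a cosmetic difference.

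There is, however, a genuine gap in the contraction and uniqueness steps when $\Omega^d = \R^2$. Both of those steps lean on the bound $\|\bu[w]\|_{H^1_x} \lesssim \|w\|_{L^2_{x,p}}$ together with Sobolev embedding to place $\bu[w]$ in some $L^q_x$. On $\T^d$ and on $\R^3$ this is fine: $\|\nabla\bu[w]\|_{L^2_x} \lesssim \|w\|_{L^2_{x,p}}$, and then the mean-zero condition (torus) or Gagliardo--Nirenberg--Sobolev (whole space, $d=3$) gives $\bu[w] \in L^6_x$. But on $\R^2$, $\widehat{\bu}[w](\xi) \sim |\xi|^{-1}\,\widehat{\bm{\Sigma}}[w](\xi)$ has a non-square-integrable singularity near $\xi = 0$ (since generically $\widehat{\bm\Sigma}[w](0)\neq 0$), so $\bu[w] \notin L^2_x$ and $\dot H^1(\R^2)$ does not embed into any $L^p$. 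Thus the asserted $H^1_x$ bound and the ensuing Hölder estimate on $\int (\bu[w]\cdot\nabla_x\tilde\psi)\,w$ both fail, and the $L^2$ contraction/uniqueness argument does not close in that case. The paper circumvents this by running, in parallel with the $L^2$ estimate, an $L^1_x L^1_p$ estimate on the Picard differences (and on $\psi - \tilde\psi$ for uniqueness); controlling $\bm{\Sigma}[w]$ in $L^1_x \cap L^2_x$ gives $\widehat{\bm{\Sigma}}[w] \in L^\infty_\xi \cap L^2_\xi$ and hence $\bu[w] \in L^q_x$ for $q$ slightly above $2$, which is enough to close the Hölder estimate. To repair your argument on $\R^2$ you would need to add this $L^1$-based control; the a~priori $H^2_x L^2_p$ bounds themselves are unaffected (there $\bu$ enters only through $\nabla\bu$ after using $\div\bu = 0$), so your diagnosis of ``the hard part'' is slightly off: it is the low-frequency control of $\bu$ on $\R^2$, not the top-order $\nabla_x^2$ commutators, that creates the extra work.
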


\begin{proof}
\emph{(Existence)}. Let $\varepsilon > 0$. For $0 \leq \varphi \in C^\infty_0(\R^d)$ with $\int \varphi = 1$, define the mollification $(g)_\varepsilon := \varepsilon^{-d} g \ast  \varphi(\cdot/\varepsilon)$.

Consider the \emph{mollified equations}
\begin{equation}
    \p_t \psi + \bp\cdot\nabla_x\psi + (\bu)_\varepsilon \cdot\nabla_x\psi + \div_p\left[({\bf I}-\bp\otimes\bp)(\nabla(\bu)_{\varepsilon} \bp) \psi \right] = \nu \Delta_p\psi +\kappa\Delta_x\psi \, ,
\end{equation}
subject to the constitutive law~\eqref{eq:stokes}-\eqref{eq:activestress} for $\bu$. We also mollify the initial condition $\psi(\cdot,0) = (\psi^{\rm in})_\varepsilon$.

Define the \emph{Picard iterates} $\psi_n$ inductively: $\psi_{-1}=0$ and $\psi_n$, $n=0,1,2,\dots$, is the solution to the following linear advection-diffusion equation with spatially smooth coefficients:
\begin{equation}
\p_t \psi_n+\bp\cdot\nabla_x \psi_n+\bu[\psi_{n-1}]\cdot\nabla_x\psi_n + \div_p[({\bf I}-\bp\otimes\bp)\nabla\bu[\psi_{n-1}]\bp \psi_n] = \nu\Delta_p\psi_n +\kappa\Delta_x\psi_n\, ,
\end{equation}
with smooth initial condition $\psi_n(\cdot,0) = (\psi^{\rm in})_\varepsilon$. Clearly, $\psi_n \geq 0$ and mass is conserved, i.e., $\int \psi_n(\bx,\bp,t) \, d\bx \, d\bp = \int \psi^{\rm in}(\bx,\bp) \, d\bx \, d\bp$ for all $t > 0$.

\emph{1. \emph{A priori} estimates.} To begin, we record the following \emph{a priori} energy estimates for a smooth solution $\psi$ to~\eqref{eq:smoluchowski}:
\begin{equation}\label{eq:psi1}
\begin{aligned}
&\frac{1}{2}\frac{d}{dt}\norm{\psi}_{L^2_{x,p}}^2 +\frac{1}{2}\int_{x,p}\div_p[({\bf I}-\bp\otimes\bp)(\nabla\bu\bp)]\,\psi^2\,d\bx d\bp \\
&\qquad= -\nu\norm{\nabla_p\psi}_{L^2_{x,p}}^2 -\kappa\norm{\nabla_x\psi}_{L^2_{x,p}}^2\,,
\end{aligned}
\end{equation}
\begin{equation}\label{eq:psi2}
\begin{aligned}
&\frac{1}{2}\frac{d}{dt}\norm{\nabla_x\psi}_{L^2_{x,p}}^2 + \int_{x,p} (\nabla\bu\nabla_x\psi)\cdot\nabla_x\psi \, d\bx d\bp \\
&\qquad
-\int_{x,p} ({\bf I}-\bp\otimes\bp)(\nabla^2\bu \bp \psi):(\nabla_p\nabla_x\psi)\,d\bx d\bp \\
&\qquad 
+\frac{1}{2}\int_{x,p}\div_p[({\bf I}-\bp\otimes\bp)(\nabla\bu\bp)]\,\abs{\nabla_x\psi}^2\,d\bx d\bp\\
&\qquad\qquad = -\nu\norm{\nabla_p\nabla_x\psi}_{L^2_{x,p}}^2 -\kappa\norm{\nabla_x^2\psi}_{L^2_{x,p}}^2 \,,
\end{aligned}
\end{equation}
\begin{equation}\label{eq:psi3}
\begin{aligned}
&\frac{1}{2}\frac{d}{dt}\norm{\nabla_x^2\psi}_{L^2_{x,p}}^2 + \int_{x,p}(\nabla^2\bu\cdot\nabla_x\psi+\nabla\bu\cdot\nabla_x^2\psi):\nabla_x^2\psi\, d\bx d\bp \\
&\qquad -\int_{x,p} ({\bf I}-\bp\otimes\bp)[\nabla^3\bu \bp \psi+2\nabla^2\bu \bp \otimes\nabla_x\psi]:(\nabla_p\nabla_x^2\psi)\,d\bx d\bp \\
&\qquad 
+\frac{1}{2}\int_{x,p}\div_p[({\bf I}-\bp\otimes\bp)(\nabla\bu\bp)]\,\abs{\nabla_x^2\psi}^2\,d\bx d\bp\\
&\qquad\qquad = -\nu\norm{\nabla_p\nabla_x^2\psi}_{L^2_{x,p}}^2 -\kappa\norm{\nabla_x^3\psi}_{L^2_{x,p}}^2\,.
\end{aligned}
\end{equation}
The final term on the left-hand side of each of \eqref{eq:psi1}-\eqref{eq:psi3} arises from integration by parts twice in $\bp$.

From \eqref{eq:psi1}-\eqref{eq:psi3}, we obtain the following bounds:
\begin{equation}
\frac{d}{dt}\norm{\psi}_{L^2_{x,p}}^2 \lesssim \norm{\nabla\bu}_{L^\infty_x}\norm{\psi}_{L^2_{x,p}}^2 -\nu\norm{\nabla_p\psi}_{L^2_{x,p}}^2 -\kappa\norm{\nabla_x\psi}_{L^2_{x,p}}^2\,,
\end{equation}
\begin{equation}
\begin{aligned}
\frac{d}{dt}\norm{\nabla_x\psi}_{L^2_{x,p}}^2 &\lesssim \norm{\nabla\bu}_{L^\infty_x}\norm{\nabla_x\psi}_{L^2_{x,p}}^2 +
\norm{\nabla^2\bu}_{L^2_x}\norm{\psi}_{L^\infty_xL^2_p}\norm{\nabla_p\nabla_x\psi}_{L^2_{x,p}} \\
&\quad 
%+\norm{\nabla\bu}_{L^\infty_x}\norm{\nabla_x\psi}_{L^2_{x,p}}^2
  -\nu\norm{\nabla_p\nabla_x\psi}_{L^2_{x,p}}^2 -\kappa\norm{\nabla_x^2\psi}_{L^2_{x,p}}^2\,,
\end{aligned}
\end{equation}
\begin{equation}
\begin{aligned}
\frac{d}{dt}\norm{\nabla_x^2\psi}_{L^2_{x,p}}^2 &\lesssim \norm{\nabla^2\bu}_{L^4_x}\norm{\nabla_x\psi}_{L^4_xL^2_p}\norm{\nabla_x^2\psi}_{L^2_{x,p}} %+\norm{\nabla\bu}_{L^\infty_x}\norm{\nabla_x^2\psi}_{L^2_{x,p}}^2 
 +\norm{\nabla^3\bu}_{L^2_x}\norm{\psi}_{L^\infty_xL^2_p}\norm{\nabla_p\nabla_x^2\psi}_{L^2_{x,p}} \\ 
&\quad +\norm{\nabla\bu}_{L^\infty_x}\norm{\nabla_x^2\psi}_{L^2_{x,p}}^2 
+\norm{\nabla^2\bu}_{L^4_x}\norm{\nabla_x\psi}_{L^4_xL^2_p}\norm{\nabla_p\nabla_x^2\psi}_{L^2_{x,p}} \\
&\quad
 -\nu\norm{\nabla_p\nabla_x^2\psi}_{L^2_{x,p}}^2 -\kappa\norm{\nabla_x^3\psi}_{L^2_{x,p}}^2\,.
\end{aligned}
\end{equation}
Furthermore, we note that $\nabla\bu$ satisfies 
 \begin{equation}\label{eq:gradu_bd_psi}
 \norm{\nabla\bu}_{H^2_x} \les \norm{\psi}_{H^2_xL^2_p}\, .
 \end{equation}
 %which may be obtained by multiplying~\eqref{eq:stokes} by $\bu$ and integrating by parts, see~\eqref{eq:gradu_bd0}. 
 Using Sobolev embedding and the bound \eqref{eq:gradu_bd_psi} for $\bu$, and using Young's inequality to absorb the terms $\norm{\nabla_p\nabla_x^2\psi}_{L^2_{x,p}}$ into the rotational diffusion term, we obtain 
\begin{equation}
\begin{aligned}
\frac{d}{dt}\norm{\psi}_{H^2_xL^2_p}^2 + \nu \| \nabla_p \psi \|_{H^2_xL^2_p}^2  &\lesssim_\nu \norm{\psi}_{H^2_xL^2_p}^3 + \norm{\psi}_{H^2_xL^2_p}^4  \,.%+\norm{\psi}_{H^2_xL^2_p}^3-\nu\norm{\nabla_p\nabla_x^2\psi}_{L^2_{x,p}}^2 -\kappa\norm{\nabla_x^3\psi}_{L^2_{x,p}}^2\,.
\end{aligned}
\end{equation}
By Gronwall's inequality, there exists $\bar{T} = \bar{T}(\| \psi^{\rm in} \|_{H^2_x L^2_p}, \nu) > 0$ with the property 
\begin{equation}
    \| \psi \|_{L^\infty_t H^2_x L^2_p(\Omega^d \times S^{d-1} \times (0,\bar{T}))}^2 + \nu \| \nabla_p \psi \|_{L^2_t H^2_x L^2_p(\Omega^d \times S^{d-1} \times (0,\bar{T}))}^2 \les_\nu \| \psi^{\rm in} \|_{H^2_x L^2_p}^2 \, 
\end{equation}
While the above \emph{a priori} estimates were for smooth $\bu$, analogous computations for the Picard iterates $\psi_n$ to the mollified equations produce the inequality
\begin{equation}
\begin{aligned}
\frac{d}{dt}\norm{\psi_n}_{H^2_xL^2_p}^2 + \nu \| \nabla_p \psi_n \|_{H^2_xL^2_p}^2  &\lesssim_\nu (\norm{\psi_{n-1}}_{H^2_xL^2_p} + \norm{\psi_{n-1}}_{H^2_xL^2_p}^2 ) \norm{\psi_n}_{H^2_xL^2_p}^2
\end{aligned}
\end{equation}
and, consequently, the \emph{a priori} estimates
\begin{equation}
    \label{eq:psiH2xL2p}
    \| \psi_n \|_{L^\infty_t H^2_x L^2_p(\Omega^d \times S^{d-1} \times (0,\bar{T}))}^2 + \nu \| \nabla_p \psi_n \|_{L^2_t H^2_x L^2_p(\Omega^d \times S^{d-1} \times (0,\bar{T}))}^2 \les_\nu \| \psi^{\rm in} \|_{H^2_x L^2_p}^2 \, .
\end{equation}

\emph{2. Contraction}. Letting $\bu_n=\bu[\psi_n]$, the difference in Picard iterates $\psi_n-\psi_{n-1}$ satisfies 
\begin{equation}
\begin{aligned}
\p_t(&\psi_n-\psi_{n-1})+\bp\cdot\nabla_x(\psi_n-\psi_{n-1})
+(\bu_{n-1}-\bu_{n-2})\cdot\nabla_x\psi_n \\ &+\bu_{n-2}\cdot\nabla_x(\psi_n-\psi_{n-1})
+ \div_p[({\bf I}-\bp\otimes\bp)\nabla(\bu_{n-1}-\bu_{n-2})\bp \psi_n] \\
& + \div_p[({\bf I}-\bp\otimes\bp)\nabla\bu_{n-2}\bp (\psi_n-\psi_{n-1})] \\
 &= \nu\Delta_p(\psi_n-\psi_{n-1}) +\kappa\Delta_x(\psi_n-\psi_{n-1})\,.
\end{aligned}
\end{equation}
Multiplying by $\psi_n-\psi_{n-1}$ and integrating by parts, we have
\begin{equation}
\begin{aligned}
&\frac{d}{dt}\norm{\psi_n-\psi_{n-1}}_{L^2_{x,p}}^2 \lesssim
\left| \int_{x,p} (\bu_{n-1}-\bu_{n-2})\cdot\nabla_x\psi_n (\psi_n - \psi_{n-1}) \, d\bx \,  d\bp \right| \\
&\quad+\norm{\nabla\bu_{n-2}}_{L^\infty_x}\norm{\psi_n-\psi_{n-1}}_{L^2_{x,p}}^2\\
 &\quad +\norm{\nabla(\bu_{n-1}-\bu_{n-2})}_{L^2_x}\norm{\psi_n}_{L^\infty_xL^2_p}\norm{\nabla_p(\psi_n-\psi_{n-1})}_{L^2_{x,p}} \\
 &\quad -\nu\norm{\nabla_p(\psi_n-\psi_{n-1})}_{L^2_{x,p}}^2 -\kappa\norm{\nabla_x(\psi_n-\psi_{n-1})}_{L^2_{x,p}}^2 \,.
\end{aligned}
\end{equation}
In dimension three and on $\T^2$, we have
\begin{equation}
\begin{aligned}
    &\left| \int_{x,p} (\bu_{n-1}-\bu_{n-2})\cdot\nabla_x\psi_n (\psi_n - \psi_{n-1}) \, d\bx \,  d\bp \right| \\
    &\qquad\les \| \bu_{n-1}-\bu_{n-2} \|_{L^6_x} \| \nabla_x \psi_n \|_{L^3} \| \psi_n - \psi_{n-1} \|_{L^2} \, .
    \end{aligned}
\end{equation}
The approach on $\R^2$ is more subtle, and we return to it later. By the \emph{a priori} bound \eqref{eq:psiH2xL2p}, we have that $\norm{\psi_n}_{H^2_xL^2_p}$ is controlled for sufficiently small time. Furthermore, by \eqref{eq:gradu_bd_psi}, we may bound $\norm{\nabla(\bu_{n-1}-\bu_{n-2})}_{L^2_x}$ by $\norm{\psi_{n-1}-\psi_{n-2}}_{L^2_x}$. Using Young's inequality to absorb the $\norm{\nabla_p(\psi_n-\psi_{n-1})}_{L^2_{x,p}}$ term into the rotational diffusion term, we obtain 
\begin{equation}\label{eq:diff_ineq}
\frac{d}{dt}\norm{\psi_n-\psi_{n-1}}_{L^2_{x,p}}^2 \lesssim_{\nu}
\norm{\psi_{n-1}-\psi_{n-2}}_{L^2_{x,p}}^2 + \norm{\psi_n-\psi_{n-1}}_{L^2_{x,p}}^2\,.
\end{equation}
This is a differential inequality of the form
\begin{equation}
\frac{d}{dt}A_n \le M A_{n-1}+MA_n\,,
\end{equation}
where $M>0$ is independent of $n$ and each $A_n(t) \geq 0$. By Gr\"onwall's inequality,
\begin{equation}
A_n(t)\le M\int_0^te^{M(t-s)}A_{n-1}(s)\, ds\, .
\end{equation}
In particular, for sufficiently small $T$, we have $\sup_{t \in (0,T)} A_n\le \frac{1}{2} \sup_{t \in (0,T)} A_{n-1}$. 
Thus, we have strong convergence of the Picard iterates $\psi_n \to \psi_\infty$ in $C_t L^2_{x,p}$ for short times. By lower semicontinuity, the \emph{a priori} estimates~\eqref{eq:psiH2xL2p} persist as $n \to +\infty$. Then we allow the mollification parameter $\varepsilon \to 0^+$. That the solutions belong to $C([0,T];H^2_x L^2_p \cap L^1_x L^1_p)$ can be justified after the fact via the linear theory. Looking ahead, once uniqueness is known, we can extend the solution to its maximal time of existence, for which a lower bound is $\bar{T}$ from Step 1.

\emph{2'. Contraction in $\R^2$}. We now address the two-dimensional setting. We consider also estimates satisfied by $\| \psi_n - \psi_{n-1} \|_{L^1}$, namely (with $q=2+\varepsilon$, $0 < \varepsilon \ll 1$),
\begin{equation}
\begin{aligned}
    &\frac{d}{dt} \| \psi_n - \psi_{n-1} \|_{L^1} \\
    &\quad \leq \| \bu_{n-1} - \bu_{n-2} \|_{L^{q}} \| \nabla_x \psi_n \|_{L^{q'}} + \| \div_p [(\bI - \bp \otimes \bp) \nabla (\bu_{n-1} - \bu_{n-2}) \bp \psi_n] \|_{L^1} \\
    &\quad \les (\| \psi_{n-1} - \psi_{n-2} \|_{L^1} + \| \psi_{n-1} - \psi_{n-2} \|_{L^2}) \| \psi_n \|_{L^1_x L^1_p \cap H^2_x L^2_p} \\
    &\quad\quad+ \| \psi_{n-1} - \psi_{n-2} \|_{L^2} \| \psi_n \|_{L^2_x H^1_p}  \, .
    \end{aligned}
\end{equation}
To complete the $L^2$ estimate, we write
\begin{equation}
\begin{aligned}
    &\left| \int_{x,p} (\bu_{n-1}-\bu_{n-2})\cdot\nabla_x\psi_n (\psi_n - \psi_{n-1}) \, d\bx \,  d\bp \right| \\
    &\quad \les \| \bu_{n-1}-\bu_{n-2} \|_{L^4_x} \| \nabla_x \psi_n \|_{L^4} \| \psi_n - \psi_{n-1} \|_{L^2} \\
    &\quad \les \| \nabla_x \psi_n \|_{L^4} ( \| \psi_{n-1} - \psi_{n-2} \|_{L^1} +  \| \psi_{n-1} - \psi_{n-2} \|_{L^2} )\| \psi_n - \psi_{n-1} \|_{L^2} \, .
    \end{aligned}
\end{equation}
We now conclude via the differential inequality for $\| \psi_n - \psi_{n-1} \|_{L^1}^2 + \| \psi_n - \psi_{n-1} \|_{L^2}^2$.

\emph{(Uniqueness)}. Given two strong solutions $\psi$, $\wt\psi$ on $\Omega^d \times S^{d-1} \times (0,T)$, we consider again the differential inequality satisfied by $\| \psi - \wt\psi \|_{L^2}^2$ (or, when $\Omega^d = \R^2$, $\| \psi - \wt\psi \|_{L^1}^2 + \| \psi - \wt\psi \|_{L^2}^2$). Estimates analogous to~\eqref{eq:diff_ineq} grant uniqueness.
\end{proof}

%%%%%%%%%%%%%%%%%%%%%%%%%%%%%%%%%%%%%%%%%%%%%%%%%%%%%%%%%%%%%%%%%%%%%%%%
%%%%%%%%%%%%%%%%%%%%%%%%%%%%%%%%%%%%%%%%%%%%%%%%%%%%%%%%%%%%%%%%%%%%%%%%

\section{Proof of auxiliary lemmas}\label{app:p_poincare}

First, we prove the Poincar\'e-type inequality~\eqref{eq:p_poincare}.

\begin{proof}[Proof of Lemma~\ref{lem:p_poincare}]
Without loss of generality, we may choose $\bk=k\be_1$ with $k = |\bk|$ and define $\bp=p_1\be_1+p_2\be_2+p_3\be_3$ with $p_1^2+p_2^2+p_3^2=1$.

Away from $p_1^2=1$, we may bound the full spatial gradient $k\norm{f_k}_{L^2}$ in terms of the spatial gradient projected off the unit sphere, i.e., $k\norm{(\be_1-p_1\bp)f_k}_{L^2}=k\norm{\sqrt{1-p_1^2}f_k}_{L^2}$. 
Near $p_1^2=1$, we must instead control $k\norm{f_k}_{L^2}$ using the orientational gradient $\norm{\nabla_p f_k}_{L^2}$. 

We thus define a cutoff function on the unit sphere:
\begin{equation}\label{eq:cutoff_sph}
\varphi_\delta = \begin{cases}
1, & 1-p_1^2 < \delta \\
0, & 1-p_1^2 > 2\delta
\end{cases}
\end{equation}
for some $0<\delta<\frac{1}{2}$, with smooth decay between. Then, away from $p_1^2=1$, we have 
\begin{equation}\label{eq:bd_p1small}
\norm{f_k(1-\varphi_\delta)}_{L^2}^2 \le \frac{1}{\delta}\norm{\sqrt{1-p_1^2} f_k}_{L^2}^2\,.
\end{equation}
Near $p_1^2=1$, we have
\begin{equation}\label{eq:bd_p1big} 
\begin{aligned}
\norm{f_k\varphi_\delta}_{L^2}^2 &\le c\delta\norm{\nabla_p(f_k\varphi_\delta)}_{L^2}^2 
\le c\delta \bigg(\norm{(\nabla_p f_k)\varphi_\delta}_{L^2}^2 +\norm{ f_k(\nabla_p\varphi_\delta)}_{L^2}^2  \bigg)\\
&\le c\delta \bigg(\norm{\nabla_p f_k}_{L^2}^2 + \frac{1}{\delta^2}\norm{\sqrt{1-p_1^2} f_k}_{L^2}^2  \bigg) \,,
\end{aligned}
\end{equation}
where we have used that $|\nabla_p\varphi_\delta| \les \frac{1}{\delta^{1/2}}$ is supported within the strip $\delta\le 1-p_1^2 \le 2\delta$. 
Together, we obtain
\begin{equation}\label{eq:p_poincare0}
k^2\norm{f_k}_{L^2}^2 \les \frac{k^2}{\delta}\norm{\sqrt{1-p_1^2} f_k}_{L^2}^2 + \delta k^2 \norm{\nabla_p f_k}_{L^2}^2\,,
\end{equation}
and, choosing $\delta=\frac{1}{4}\nu^{1/2}k^{-1/2}$, we obtain Lemma \ref{lem:p_poincare}. 
\end{proof}

Second, we justify~\eqref{eq:thebthing}.

\begin{proof}[Proof of~\eqref{eq:thebthing}]
We have
\begin{equation}
\begin{aligned}
\left( \int_0^{t/2} + \int_{t/2}^t \right) (t-s)^{-a} e^{-bs} \, ds &\les t^{-a} \int_0^{t/2} e^{-bs} \, ds + \int_{t/2}^t (t-s)^{-a} \, ds \,  e^{-bt/2}  \\
& \les_a t^{-a} b^{-1} (1-e^{-bt/2}) + t^{1-a} e^{-bt/2} \, .
\end{aligned}
\end{equation}
For $t \leq b^{-1}$, we have
\begin{equation}
  t^{-a} b^{-1} (1-e^{-bt/2}) \les t^{1-a} \les b^{a-1} \, ,
\end{equation}
since $1-e^{-bt/2} \les bt$, and, since $e^{-bt/2} \leq 1$,
\begin{equation}
t^{1-a} e^{-bt/2} \les b^{a-1} \, .
\end{equation}
For $t \geq b^{-1}$, we have
\begin{equation}
t^{-a} b^{-1} (1-e^{-bt/2}) \les b^{a-1} \, ,
\end{equation}
since $1-e^{-bt/2} \leq 1$, and
\begin{equation}
t^{1-a} e^{-bt/2} \les b^{a-1} (bt)^{1-a} e^{-bt/2} \leq b^{a-1} \, ,
\end{equation}
since $(bt)^{1-a} e^{-bt/2} \les 1$.
\end{proof}

\subsection*{Acknowledgments}
 DA was supported by NSF Postdoctoral Fellowship Grant No.\ 2002023 and Simons Foundation Grant No.\ 816048. LO acknowledges support from NSF Postdoctoral Fellowship DMS-2001959. We thank the anonymous referees for their valuable work.

\bibliographystyle{siamplain}
\bibliography{bibliography}
\end{document}